\newtheorem{lem}{Lemma}[section]
\newtheorem{thm}[lem]{Theorem}
\newtheorem{cor}[lem]{Corollary}
\theoremstyle{definition}
\newtheorem{rem}[lem]{Remark}
\newcommand{\F}[1]{\Bbb{F}_{#1}}
\newcommand{\Z}{\Bbb{Z}}
\newcommand{\C}{\Bbb{C}}
\newcommand{\spl}[2]{\mathrm{SL}_{#1}(#2)}
\newcommand{\pspl}[2]{\mathrm{PSL}_{#1}(#2)}
\newcommand{\gl}[2]{\mathrm{GL}_{#1}(#2)}
\newcommand{\pgl}[2]{\mathrm{PGL}_{#1}(#2)}
\newcommand{\pb}[1]{\mathcal{P}(#1)}
\newcommand{\rpb}[1]{{\mathcal{RP}}(#1)}
\newcommand{\rpbker}[1]{\mathcal{RP}_1(#1)}
\newcommand{\bl}[1]{\mathcal{B}(#1)}
\newcommand{\rbl}[1]{{\mathcal{RB}}(#1)}
\newcommand{\ppb}[1]{\mathcal{A}(#1)} 
\newcommand{\gpb}[1]{\left[ #1\right]} 
\newcommand{\sus}[1]{\left\{ #1\right\} }
\newcommand{\bconst}[1]{C_{#1}}
\newcommand{\rcr}{\mathrm{cr}}
\newcommand{\bw}{\Lambda}
\newcommand{\rr}[1]{\mathcal{R}_{#1}}
\newcommand{\dilog}[1]{\mathcal{L}_{#1}}
\newcommand{\pn}[2]{\mathbb{P}^{#1}(#2)}
\newcommand{\projl}[1]{\pn{1}{#1}}
\newcommand{\sq}[1]{G_{#1}}
\newcommand{\<}{\langle}
\renewcommand{\>}{\rangle}
\newcommand{\psyl}[2]{{#1}_{(#2)}}
\newcommand{\id}[1]{\mathrm{Id}_{#1}}
\renewcommand{\ker}[1]{\mathrm{Ker}(#1)}
\newcommand{\image}[1]{\mathrm{Im}(#1)}
\newcommand{\coker}[1]{\mathrm{Coker}(#1)}
\newcommand{\Aut}[2]{\mathrm{Aut}_{#1}(#2)}
\newcommand{\tor}[4]{\mathrm{Tor}^{#1}_{#2}(#3,#4)}
\newcommand{\sgr}[1]{\mathrm{R}_{#1}}
\newcommand{\an}[1]{\left\langle{#1}\right\rangle}
\newcommand{\pf}[1]{\left\langle\!\left\langle{#1}\right\rangle\!\right\rangle}
\newcommand{\gw}[1]{\mathrm{GW}(#1)}
\newcommand{\aug}[1]{\mathcal{I}_{#1}}
\newcommand{\gwaug}[1]{\mathrm{I}(#1)}
\newcommand{\gwrel}[1]{\mathcal{J}_{#1}}
\newcommand{\igr}[1]{\Z[#1]}
\newcommand{\matr}[4]{\left[\begin{array}{cc}
#1&#2\\
#3&#4\\
\end{array}
\right]}
\newcommand{\extpow}[2]{{#2}\wedge{#2}}
\newcommand{\Extpow}[3]{\wedge^{#1}_{#2}(#3)}
\newcommand{\sextpow}[3]{\mathrm{SE}^{#1}_{#2}(#3)}
\newcommand{\tens}[3]{\mathrm{T}^{#1}_{#2}(#3)}
\newcommand{\sym}[3]{\mathrm{Sym}^{#1}_{#2}(#3)}
\newcommand{\symm}{\ast}
\newcommand{\asymm}{\circ}
\newcommand{\asym}[3]{\mathrm{S}^{#1}_{#2}(#3)}
\newcommand{\rasym}[3]{\mathrm{RS}^{#1}_{#2}(#3)}
\newcommand{\rsf}[2]{[#1,#2]}
\newcommand{\zhalf}[1]{#1} 
\newcommand{\zzhalf}[1]{{#1}'}
\newcommand{\Inv}[2]{\mathrm{H}^0\left( #2,#1\right)}
\newcommand{\gal}[2]{\mathrm{Gal}(#1 /#2)}
\newcommand{\mwk}[2]{K^{\mathrm{\small MW}}_{#1}({#2})}
\newcommand{\milk}[2]{K^{\mathrm{\small M}}_{#1}({#2})}
\newcommand{\kind}[1]{K^{\mathrm{\small ind}}_3(#1)}
\newcommand{\ho}[3]{\mathrm{H}_{#1}(#2,#3 )}
\newcommand{\hoz}[2]{\mathrm{H}_{#1}(#2)}
\newcommand{\hinv}[2]{\mathrm{inv}_{#1}{#2}}
\newcommand{\conj}[2]{\mathrm{Conj}(#1,#2)}
\newcommand{\rs}{\mathrm{res}}
\newcommand{\cores}{\mathrm{cor}}
\newcommand{\Ind}[3]{\mathrm{Ind}_{#1}^{#2}#3}
\newcommand{\Tor}[2]{\mathrm{Tor}^{\Z}_{1}(#1,#2)}
\newcommand{\covtor}[1]{\mathrm{Tor}^{\Z}_1\widetilde{(\mu_{#1},\mu_{#1})}}
\title{A Bloch-Wigner complex  for $\mathrm{SL}_2$}
\author{Kevin Hutchinson}
\address{School of Mathematical Sciences,
 University College Dublin}
\email{kevin.hutchinson@ucd.ie}
\date{\today}
\keywords{
$K$-theory, Group Homology
}
\subjclass{19G99, 20G10}
\begin{document}
\bibliographystyle{plain}
\maketitle

\begin{abstract}
We introduce a refinement of the Bloch-Wigner  complex of a field $F$. This refinement  is  complex of modules over 
 the multiplicative group of the field. Instead of computing the $K_2(F)$ and $\kind{F}$ - as the classical 
Bloch-Wigner complex does - it calculates the second and third integral homology of $\spl{2}{F}$. On passing 
to $F^\times$-coinvariants we recover the classical Bloch-Wigner complex. We include the case of finite fields
throughout the article. 

\end{abstract}

\section{Introduction}\label{sec:intro}
 
What is now usually referred to the as the \emph{Bloch group} of a field $F$ arose first in the work of S. Bloch 
as an explicitly-presented approximation to indecomposable $K_3$ of the 
field  which could be used to define a regulator map based on the dilogarithm (see the notes 
\cite{bloch:regulators}).  When $F=\C$ (and, more generally,
when $F^\times=(F^\times)^2$) there is a natural identification $\kind{\C}=\ho{3}{\spl{2}{\C}}{\Z}$, and this latter group
is a natural target for invariants of hyperbolic $3$-manifolds. It was because of this connection with hyperbolic 
geometry that Dupont and Sah (\cite{sah:dupont} and \cite{sah:discrete3}) explored the properties of the Bloch group. 
In particular,  they wrote down a proof of the so-called \emph{Bloch-Wigner Theorem} 
(\cite{sah:dupont}, Theorem 4.10): The pre-Bloch group (or \emph{scissors congruence group}) 
of the field $F$ is the group, $\pb{F}$, with generators 
$\gpb{x}$, $x\in F^\times\setminus\{ 1\}$ subject to the relations
\[
R_{x,y}:\quad  \gpb{x}-\gpb{y}+\gpb{y/x}-\gpb{(1-x^{-1})/(1-y^{-1})}+\gpb{(1-x)/(1-y)} \quad x\not=y.
\]  
(These relations correspond to the $5$-term functional equation satisfied by the classical dilogarithm. See Zagier
\cite{zagier:dilog} for a beautiful exposition of these and related matters.) We will let $\asym{2}{\Z}{F^\times}$ 
denote the antisymmetric product
\[
\frac{F^\times\otimes_{\Z}F^\times}{<x\otimes y + y\otimes x | x,y \in F^\times>}.
\]
 Then there is a well-defined group homomorphism
\[
\lambda:\pb{F}\to\asym{2}{\Z}{F^\times},\quad \gpb{x}\mapsto (1-x)\otimes x
\] 
and the theorem of Bloch and Wigner says that there is an exact sequence
\[
\xymatrix{
0\ar[r]
&\mu_\C\ar[r]
&\ho{3}{\spl{2}{\C}}{\Z}\ar[r]
&\pb{\C}\ar[r]^-{\lambda}
&\asym{2}{\Z}{\C}\ar[r]
&\ho{2}{\spl{2}{\C}}{\Z}
\ar[r]
&0.}
\]
The argument of Dupont and Sah works equally well for any algebraically closed field and more generally 
for any quadratically closed field (i.e. satisfying $F^\times=(F^\times)^2$).  When the field $F$ is quadratically 
closed then the homology groups can be interpreted in terms of $K$-theory: $\ho{3}{\spl{2}{F}}{\Z}=\kind{F}$ and 
$\ho{2}{\spl{2}{F}}{\Z}=K_2(F)=\milk{2}{F}$. Thus the homology groups of the \emph{Bloch-Wigner complex}
\[
\xymatrix{\pb{F}\ar[r]^-{\lambda}
&\asym{2}{\Z}{F}
}
\] 
are (essentially) the $K$-theory groups $\kind{F}$ and $\milk{2}{F}$. The group $\bl{F}=\ker{\lambda}$ is the 
\emph{Bloch group} of the $F$.

Suslin showed that, interpreted in this 
way, the Bloch-Wigner theorem extends to all (infinite) fields. He proved (see \cite{sus:bloch}, Theorem 5.2) 
that for any infinite field 
$F$ there is a natural exact sequence
\[
\xymatrix{
0\ar[r]
&\covtor{F}\ar[r]
&\kind{F}\ar[r]
&\pb{F}\ar[r]^-{\lambda}
&\asym{2}{\Z}{F}\ar[r]
&\milk{2}{F}
\ar[r]
&0.}
\]
where $\covtor{F}$ is the unique nontrivial extension of $\Tor{\mu_F}{\mu_F}$ by $\Z/2$ if the characteristic 
of $F$ is not $2$, and $\covtor{F}=\Tor{\mu_F}{\mu_F}$ in characteristic $2$.

The purpose of the current article is to extend the original sequence of Bloch-Wigner-Dupont-Sah in another direction:
 namely, to construct a complex, which coincides with the one above when $F$ is quadratically closed, but which 
calculates in the general case - instead of $K$-theory - the homology groups 
$\ho{3}{\spl{2}{F}}{\Z}$ and $\ho{2}{\spl{2}{F}}{\Z}$. Our main goal is to understand better the structure of the 
unstable homology group $\ho{3}{\spl{2}{F}}{\Z}$ and its relation to $\kind{F}$. 

To put this project in context, we recall some of what is known about the relationship between the homology groups 
and the $K$-theory groups. In general, the group extension 
\[
1\to\spl{n}{F}\to\gl{n}{F}\to F^\times\to 1
\]   
defines an action of $F^\times$ on the homology groups $\ho{k}{\spl{n}{F}}{\Z}$. Since the determinant of 
a scalar matrix is an $n$-th power, the subgroup $(F^\times)^n$ acts trivially. In the particular, the 
groups $\ho{k}{\spl{2}{F}}{\Z}$ are modules over the integral group ring $\sgr{F}:=\Z[F^\times/(F^\times)^2]$. The 
natural map $\ho{2}{\spl{2}{F}}{\Z}\to K_2(F)$ (via stabilization and an inverse Hurewicz map) is surjective and 
induces an isomorphism on $F^\times$-coinvariants
\[
\ho{2}{\spl{2}{F}}{\Z}_{F^\times}\cong K_2(F).
\]
However, the action of $F^\times$ on $\ho{2}{\spl{2}{F}}{\Z}$ is in general nontrivial. The action of $\sgr{F}$ 
factors through the Grothendieck-Witt ring $\gw{F}$ of the field, and the kernel of the surjective 
map  $\ho{2}{\spl{2}{F}}{\Z}\to K_2(F)$ is isomorphic, as a $\gw{F}$-module, to $\gwaug{F}^3$, the third power 
of the fundamental ideal $\gwaug{F}$ of the augmented ring $\gw{F}$. (See Suslin \cite{sus:tors}, Appendix for the 
details.) To be more explicit $\ho{2}{\spl{2}{F}}{\Z}$ can be expressed as a fibre product
\[
\ho{2}{\spl{2}{F}}{\Z}\cong \gwaug{F}^2\times_{\gwaug{F}^2/\gwaug{F}^3}\milk{2}{F}.
\]

$\ho{2}{\spl{2}{F}}{\Z}$ is of interest in its own right to $K$-theorists and geometers because it coincides with 
the second Milnor-Witt $K$-group, $\mwk{2}{F}$, of the field $F$ (see, for example, \cite{mor:trieste} or 
\cite{morel:puiss}). More generally, the calculation of the groups $\ho{n}{\spl{n}{F}}{\Z}$, which are at the boundary 
of the homology stability range, involves the Milnor-Witt $K$-groups $\mwk{n}{F}$ (\cite{hutchinson:tao3}). 

The group $\ho{3}{\spl{2}{F}}{\Z}$ is of interest, among other reasons, because it is strictly below the range of 
homology stability.
However there is, for any field $F$, a natural homomorphism $\ho{3}{\spl{2}{F}}{\Z}\to\kind{F}$ which 
induces a surjective homomorphism (see \cite{hutchinson:tao2})
\[
\xymatrix{\ho{3}{\spl{2}{F}}{\Z}_{F^\times}\ar@{-{>>}}[r]& \kind{F}}.
\] 
 Suslin has asked the question whether this is an isomorphism, and it is known (see Mirzaii \cite{mirzaii:third})
that the kernel consists of - at worst - $2$-primary torsion. 

In order to refine the Bloch-Wigner sequence to a sequence which captures the homology of $\spl{2}{F}$, it is 
necessary to build in the $\sgr{F}$-module structures at each stage.
Thus, in this article we introduce first the \emph{refined pre-Bloch group} $\rpb{F}$ of a field $F$. 
This is the $\sgr{F}$-module generated by symbols $\gpb{x}$, $x\in F^\times\setminus\{ 1\}$ subject to the relations
\[
0=[x]-[y]+\an{x}\left[ y/x\right]-\an{x^{-1}-1}\left[(1-x^{-1})/(1-y^{-1})\right]
+\an{1-x}\left[(1-x)/(1-y)\right],\quad x,y\not= 1
\]
where $\an{x}$ denotes the class of $x$ in $F^\times/(F^\times)^2$. Similarly we introduce an $\sgr{F}$-module 
$\rasym{2}{\Z}{F^\times}$ which has natural generators $\rsf{x}{y}$,  $x,y\in F^\times$. The `refined Bloch-Wigner
complex' is then the complex of $\sgr{F}$-modules  
\[
\xymatrix{\rpb{F}\ar[r]^-{\Lambda}&\rasym{2}{\Z}{F}},\quad \gpb{x}\mapsto \rsf{1-x}{x}.
\]
On taking $F^\times$-coinvariants this reduces to the classical Bloch-Wigner complex.  Our main result (Theorem 
\ref{thm:main}) is that there is, for any field $F$, a natural complex of $\sgr{F}$-modules 
\[
\xymatrix{0\ar[r]
&\Tor{\mu_F}{\mu_F}\ar[r]
&\ho{3}{\spl{2}{F}}{\Z}\ar[r]
&\rpb{F}\ar[r]^-{\Lambda}
&\rasym{2}{\Z}{F^\times}\ar[r]
&\ho{2}{\spl{2}{F}}{\Z}\ar[r]
&0}
\]
which is exact at every term except possibly at the term $\ho{3}{\spl{2}{F}}{\Z}$, where the homology of the 
complex is annihilated by $4$.  

The \emph{refined Bloch group} of the field $F$ is the $\sgr{F}$-module $\rbl{F}:=\ker{\Lambda}$. The main theorem 
tells us that it is a good approximation to the $\ho{3}{\spl{2}{F}}{\Z}$. In particular, we show that - up to some 
$2$-primary torsion - $\rbl{F}_{F^\times}\cong\bl{F}$ and 
\[
\ker{\rbl{F}\to\bl{F}}\cong \ker{\ho{3}{\spl{2}{F}}{\Z}\to\kind{F}}.
\]

In a separate article we will develop further the algebraic properties of the refined Bloch group
(see \cite{hut:arxivrbl11}, for example). In particular, when the field $F$ has a valuation with residue field 
$k$, there are useful specialization homomorphisms from $\rbl{F}$ to $\pb{k}$. We will use these maps to show that if 
$F$ is a nondyadic local field with (finite) residue field $k$ there is a natural isomorphism
\[
\ho{3}{\spl{2}{F}}{\Z[1/2]}\cong \left(\kind{F}\oplus \pb{k}\right)\otimes \Z[1/2].
\]

Similarly, we will show (see \cite{hut:arxivrbl11} for details) 
that for any global field  $F$ the kernel 
\[
\ker{\ho{3}{\spl{2}{F}}{\Z[1/2]}\to\kind{F}\otimes\Z[1/2]}
\]
 maps 
homomorphically onto the infinite direct sum 
\[
\left(\oplus_v \pb{k_v}\right)\otimes\Z[1/2],
\]
  the sum being over all finite places $v$ of $F$.

Thus the (refined) Bloch groups of finite fields will play an important role in future applications. 
Because of this, and 
unlike most of the references above, throughout the paper we include the case of finite fields. At times, they 
require separate treatment and methods. For this reason we include a separate section - section \ref{sec:finite}- 
recalling the results  
we need on the homology of $\spl{2}{F}$ for finite fields $F$. In the last section of the paper we combine our 
main theorem with these homology calculations to give a proof of Suslin's theorem in the case of finite fields and to 
make some useful calculations in Bloch groups  of finite fields.     
\begin{rem} Several authors (W. Neumann \cite{neumann:extended}, W. Nahm, S. Goette and C. 
Zickert \cite{zickert:goette}) 
have introduced and studied an 
\emph{extended Bloch group}, which is exactly isomorphic to the $\kind{F}$ - at 
least for some fields $F$. This is a quite different object from the  \emph{refined Bloch group} 
introduced here, which effectively 
bears the same relationship to $\ho{3}{\spl{2}{F}}{\Z}$ as the classical Bloch group does to $\kind{F}$.
\end{rem}

\section{Bloch Groups and the Bloch-Wigner map}\label{sec:bloch}
In this section, we review the definition of the classical Bloch group and pre-Bloch group of a field, and we 
define our basic objects 
of study in this article, the refined 
Bloch group and refined pre-Bloch group.

\subsection{Some notation in this article}
For a field $F$, 
we let $\sq{F}$ denote the multiplicative group, $F^\times/(F^\times)^2$, of nonzero square classes of the field.
For $x\in F^\times$, we will let $\an{x} \in \sq{F}$ denote the corresponding square class. 
Let $\sgr{F}$ denote the integral group 
ring $\igr{\sq{F}}$ of the group $\sq{F}$. 
We will use the notation $\pf{x}$ for the basis elements, $\an{x}-1$, of the augmentation 
ideal $\aug{F}$ of $\sgr{F}$.

For a commutative ring $A$ and an $A$-module $M$, we let $\tens{n}{A}{M}$ denote 
the $n$-fold tensor product of $M$ over $A$. We let $\Extpow{n}{A}{M}$ denote the $n$-th exterior power of 
$M$ over $A$; i.e. the $n$-th term of the graded ring $\tens{n}{A}{M}/I$ where $I$ is the ideal generated 
by the elements $m\otimes m$, $m\in M$. We let $\sym{n}{A}{M}$ denote the $n$-th symmetric power of 
$M$ over $A$; i.e. the $n$-th term of the graded ring $\tens{n}{A}{M}/J$ where $J$ is the ideal generated 
by the elements $m\otimes n-n\otimes m$, $m, n\in M$. 

For any abelian group $A$ we let $\zzhalf{A}$ denote $A\otimes\Z[1/2]$. 

\subsection{The classical Bloch group} 
Let $F$ be a field with at least $4$ elements 
and let $X_n$ denote the set of all ordered $n$-tuples of distinct points of $\projl{F}$. $\pgl{2}{F}$, and hence also 
$\gl{2}{F}$, acts on $\projl{F}$ via fractional linear transformations. 
Thus these groups act on $X_n$ via a diagonal action.  

Now let $\ppb{F}$ be the cokernel of the homomorphism of $\gl{2}{F}$-modules
\[
\delta: \Z X_5 \to \Z X_4,\quad (x_1,\ldots,x_5)\mapsto \sum_{j=1}^5(-1)^{j+1}(x_1,\ldots,\hat{x_j},\ldots,x_5).
\]

 Then the \emph{pre-Bloch group} of $F$ is the group 
\[
\pb{F}:=\ppb{F}_{\gl{2}{F}}=\coker{\bar{\delta}:\left(\Z X_5\right)_{\gl{2}{F}}\to\left(\Z X_4\right)_{\gl{2}{F}}}. 
\]

Now the orbits of 
$\gl{2}{F}$ on $X_4$ are classified by the cross-ratio: i.e., in general, $(x_1,\ldots,x_4)$ is in the orbit of 
$(0,\infty,1,x)$ where $x\in \projl{F}\setminus\{\infty,0,1\}=F^\times\setminus\{ 1\}$ is the cross-ratio
\[
\frac{(x_4-x_1)(x_3-x_2)}{(x_3-x_1)(x_4-x_2)}
\]
 of $x_1,\ldots,x_4$.

Thus
\[
(\Z X_4)_{\gl{2}{F}} \cong \bigoplus_{x\in F^\times\setminus\{ 1\}}\Z\cdot (0,\infty,1,x)
\]   
and, similarly,
\[
(\Z X_5)_{\gl{2}{F}} \cong 
\bigoplus_{\substack{x,y \in F^\times\setminus\{ 1\}\\ x\not=y}} \Z\cdot (0,\infty,1,x,y).
\]

For $x\not= y$ in $F^\times\setminus\{ 1\}$, $\bar{\delta}(0,\infty,1,x,y)$
\begin{eqnarray*}
&=(\infty,1,x,y)-(0,1,x,y)+(0,\infty,x,y)-(0,\infty,1,y)+(0,\infty,1,x)\\
&=\left( 0,\infty,1,\frac{1-x}{1-y}\right)-\left(0,\infty,1,\frac{1-x^{-1}}{1-y^{-1}}\right)
+\left(0,\infty,1,\frac{y}{x}\right)
- (0,\infty,1,y)+(0,\infty,1,x)
\end{eqnarray*}

Thus, if we let $\gpb{x}$ denote the class of the orbit of $(0,\infty,1,x)$ in $\pb{F}$ then $\pb{F}$ is the group 
generated 
by the elements $\gpb{x}$, $x\in F^\times\setminus\{ 1\}$,  subject to the relations 
\[
R_{x,y}:\quad  \gpb{x}-\gpb{y}+\gpb{y/x}-\gpb{(1-x^{-1})/(1-y^{-1})}+\gpb{(1-x)/(1-y)} 
\] 
for $x\not= y$. 

Let $\asym{2}{\Z}{F^\times}$ denote the group 
\[
\frac{F^\times\otimes_{\Z}F^\times}{<x\otimes y + y\otimes x | x,y \in F^\times>}
\]
and denote by $x\asymm y$ the image of $x\otimes y$ in $\asym{2}{\Z}{F^\times}$.

The map 
\[
\lambda:\pb{F}\to \asym{2}{\Z}{F^\times},\quad  [x]\mapsto \left(1-{x}\right)\asymm {x}
\]
is well-defined, and the \emph{Bloch group of $F$}, $\bl{F}\subset \pb{F}$, is defined to be the kernel of $\lambda$. 

\subsection{The refined pre-Bloch group}

Let $F$ be a field with at least $4$ elements. The \emph{refined pre-Bloch group of $F$} is the group 
\[
\rpb{F}:=\ppb{F}_{\spl{2}{F}}=\coker{\bar{\delta}:(\Z X_5)_{\spl{2}{F}}\to(\Z X_4)_{\spl{2}{F}}}.
\]

Since for any field $F$ we have a short exact sequence of groups
\[
1\to\pspl{2}{F}\to\pgl{2}{F}\to\sq{F}\to 1
\]
it follows that if $X$ is any $\pgl{2}{F}$-set, then $\spl{2}{F}\backslash X$ is a $\sq{F}$-set and 
\[
(\Z X)_{\spl{2}{F}}\cong \Z[\spl{2}{F}\backslash X]
\]
is an $\sgr{F}$-module.

The stabilizer in $\spl{2}{F}$ of $(0,\infty)$ is the subgroup $T$ consisting of all diagonal matrices 
\[
D(a):= \left[
\begin{array}{cc}
a&0\\
0&a^{-1}
\end{array}
\right]
\]
For $x\in\projl{F}$, $D(a)\cdot x=a^2x$.

Given $x\not= y$ in $\projl{F}$,  let $T_{x,y}\in\spl{2}{F}$ be the matrix
\[
T_{x,y}=\left\{
\begin{array}{ll}
\matr{1}{-x}{\frac{1}{x-y}}{\frac{-y}{x-y}} & x,y\not= \infty\\
&\\
\matr{1}{-x}{0}{1} & y=\infty\\
&\\
\matr{0}{-1}{1}{-y} & x=\infty
\end{array}\right.
\]
Then $T_{x,y}(x)=0$, $T_{x,y}(y)=\infty$, and, by the preceding remarks, 
if $S\in\spl{2}{F}$ satisfies $S(x)=0$ and $S(y)=\infty$, 
then $S=D(a)\cdot T_{x,y}$ for some $a\in F^\times$. In particular, if $A\in\spl{2}{F}$, it follows that 
\[
T_{Ax,Ay}=D(a)\cdot T_{x,y}\cdot A^{-1}\mbox{ for some }a=a(x,y,A)\in F^\times.
\] 

For $x,y,z$ distinct points of $\projl{F}$, we define
\[
\phi(x,y,z):=T_{x,y}(z)=
\left\{
\begin{array}{ll}
(z-x)(x-y)(z-y)^{-1},& x,y,z\not=\infty\\
(y-z)^{-1},& x=\infty\\
z-x,& y=\infty\\
x-y,&z=\infty
\end{array}
\right.
\]
Thus $\phi(x,y,z)\in \projl{F}\setminus\{\infty,0\} = F^\times$, and $\phi(0,\infty,z)=z$ for $z\in F^\times$. Furthermore,
if $A\in \spl{2}{F}$, then 
\[
\phi(Ax,Ay,Az) = T_{Ax,Ay}(Az)=D(a)\cdot T_{x,y}\cdot A^{-1}(Az) = a^2\phi(x,y,z) \mbox{ for some } a\in F^\times. 
\]

Now, for 
$n\geq 1$,  let $Y_n$ denote the set of ordered $n$-tuples of distinct points of $F^\times$. 
$Y_n$ is an $F^\times$-set via the diagonal 
action. 
\begin{lem}\label{lem:phin}
For $n\geq 3$, the map 
\begin{eqnarray*}
\Phi_n:X_n& \to & Y_{n-2}\\
(x_1,x_2,\ldots,x_n)& \mapsto & (\phi(x_1,x_2,x_3),\phi(x_1,x_2,x_4),\ldots,\phi(x_1,x_2,x_n))
\end{eqnarray*}
induces a bijection of $\sq{F}$-sets 
\[
\spl{2}{F}\backslash X_n\longleftrightarrow (F^\times)^2\backslash Y_{n-2}.
\]
\end{lem}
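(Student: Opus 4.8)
The plan is to prove the statement by exhibiting an explicit inverse to $\Phi_n$ at the level of orbits, after first isolating the single transformation law that governs the behaviour of $\Phi_n$ under the group action. First I would establish, for an \emph{arbitrary} $g\in\gl{2}{F}$, a formula of the shape
\[
\Phi_n(g\cdot(x_1,\ldots,x_n)) = \mu\cdot\Phi_n(x_1,\ldots,x_n),
\]
where $\mu = \mu(g,x_1,x_2)\in F^\times$ is independent of the index $j$ and satisfies $\an{\mu}=\an{\det g}$ in $\sq{F}$. To obtain this I would consider
\[
M := T_{gx_1,gx_2}\cdot g\cdot T_{x_1,x_2}^{-1}\in\gl{2}{F}.
\]
As a fractional linear transformation $M$ sends $0\mapsto x_1\mapsto gx_1\mapsto 0$ and $\infty\mapsto x_2\mapsto gx_2\mapsto\infty$, so $M$ fixes $0$ and $\infty$ and is therefore represented by a diagonal matrix $\mathrm{diag}(p,q)$; hence $M$ acts on $F^\times$ as multiplication by $\mu:=p/q$. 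Evaluating at $\phi(x_1,x_2,x_j)=T_{x_1,x_2}(x_j)$ gives $\phi(gx_1,gx_2,gx_j)=M(\phi(x_1,x_2,x_j))=\mu\,\phi(x_1,x_2,x_j)$, which is the displayed law (the $\spl{2}{F}$-case $\mu=a^2$ recorded before the lemma is the special case $\det g=1$). Since $\det T_{x,y}=1$, we have $\det M=\det g=pq$, whence $\mu\cdot\det g=p^2\in(F^\times)^2$ and so $\an{\mu}=\an{\det g}$.

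This law does all the bookkeeping at both ends. Taking $g\in\spl{2}{F}$ forces $\mu\in(F^\times)^2$, so $\Phi_n$ carries $\spl{2}{F}$-orbits into $(F^\times)^2$-orbits and descends to a map $\overline{\Phi_n}\colon\spl{2}{F}\backslash X_n\to(F^\times)^2\backslash Y_{n-2}$. For the $\sq{F}$-equivariance, recall that the action of $\an{d}\in\sq{F}$ on the source is induced by any $g\in\gl{2}{F}$ with $\det g=d$ (as in the remark preceding the lemma, since $X_n$ is a $\pgl{2}{F}$-set). The law then gives $\overline{\Phi_n}(\an{d}\cdot[x])=[\mu\,\Phi_n(x)]$ with $\an{\mu}=\an{d}$, which is exactly $\an{d}\cdot\overline{\Phi_n}([x])$ in $(F^\times)^2\backslash Y_{n-2}$.

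It then remains to show $\overline{\Phi_n}$ is a bijection, which I would do by writing down a two-sided inverse. Define $\psi([y_1,\ldots,y_{n-2}]):=[(0,\infty,y_1,\ldots,y_{n-2})]$; this is well defined because $D(a)\cdot(0,\infty,y_1,\ldots,y_{n-2})=(0,\infty,a^2y_1,\ldots,a^2y_{n-2})$ shows that changing $y$ by a square keeps the tuple in the same $\spl{2}{F}$-orbit. Using $\phi(0,\infty,z)=z$ one gets $\overline{\Phi_n}\circ\psi=\mathrm{id}$. Conversely, $T_{x_1,x_2}\in\spl{2}{F}$ sends $(x_1,\ldots,x_n)$ to $(0,\infty,\phi(x_1,x_2,x_3),\ldots,\phi(x_1,x_2,x_n))=(0,\infty,\Phi_n(x))$, the normal form witnessing $\psi\circ\overline{\Phi_n}=\mathrm{id}$.

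The real content of the lemma is the transformation law of the first paragraph, and the main point to get right is the square-class bookkeeping $\an{\mu}=\an{\det g}$: this is precisely what reconciles the two a priori unrelated $\sq{F}$-module structures, one coming from $\det$ on the left and one from diagonal scaling on the right. Everything else is routine — the distinctness of the entries of $\Phi_n(x)$ is immediate since $T_{x_1,x_2}$ is a bijection of $\projl{F}$, and the well-definedness of $\psi$ together with the two inverse computations are direct.
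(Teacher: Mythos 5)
Your proof is correct and follows essentially the same route as the paper: descend $\Phi_n$ to orbits via the transformation law for $\phi$, then invert using the normal form $(0,\infty,y_1,\ldots,y_{n-2})$. Your single unified law for arbitrary $g\in\gl{2}{F}$ (via the diagonal matrix $M=T_{gx_1,gx_2}\,g\,T_{x_1,x_2}^{-1}$ and the observation $\an{\mu}=\an{\det g}$) is just a cleaner packaging of what the paper does in two separate computations --- the identity $T_{Ax,Ay}=D(a)T_{x,y}A^{-1}$ for $A\in\spl{2}{F}$ and the direct check that $\phi(ax_1,ax_2,ay)=a^{\pm 1}\phi(x_1,x_2,y)$ for the $\sq{F}$-equivariance.
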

\begin{proof} By the remarks above $\Phi_n$ descends to a well-defined map\\
 $\bar{\Phi}_n:\spl{2}{F}\backslash X_n\to (F^\times)^2\backslash Y_{n-2}$.   Furthermore, the map 
\begin{eqnarray*}
\Psi_n:Y_{n-2} & \to & X_n\\
(y_1,\ldots,y_{n-2})& \mapsto & (0,\infty,y_1,\ldots, y_{n-2})
\end{eqnarray*}
gives a set-theoretic section of $\Phi_n$ which descends to an inverse of $\bar{\Phi_n}$. 

Since, for any $a\in F^\times$, 
\[
\phi(ax_1,ax_2,ay)=\left\{\begin{array}{ll}
a\phi(x_1,x_2,y),& x_1\not=\infty\\
a^{-1}\phi(x_1,x_2,y),& x_1=\infty\\
\end{array}\right.
\]
it also follows that $\bar{\Phi}_n$ is a map of $\sq{F}$-sets.
\end{proof}

\begin{cor}\label{cor:xnsl}
For $n\geq 0$, let $Z_n$ denote the set of ordered $n$-tuples, $[z_1,\ldots,z_n]$,  of distinct 
points of $F^\times\setminus\{ 1\}$. 
Then for all $n\geq 3$ there is an isomorphism of $\sgr{F}$-modules
\[
(\Z X_n)_{\spl{2}{F}}\cong \sgr{F}[Z_{n-3}].
\]
\end{cor}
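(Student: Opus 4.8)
The plan is to combine Lemma~\ref{lem:phin} with the simple fact that the diagonal action of $F^\times$ on the tuple sets $Y_m$ is free, and then to track the residual $\sq{F}$-action carefully. Recall first that, as noted just before Lemma~\ref{lem:phin}, for any $\pgl{2}{F}$-set $X$ there is a natural isomorphism of $\sgr{F}$-modules $(\Z X)_{\spl{2}{F}}\cong\Z[\spl{2}{F}\backslash X]$, where the $\sgr{F}$-structure is exactly the one induced by the $\sq{F}$-action on the quotient set. In particular $(\Z X_n)_{\spl{2}{F}}\cong\Z[\spl{2}{F}\backslash X_n]$, and by Lemma~\ref{lem:phin} (applied with $m=n-2\ge 1$) the quotient $\spl{2}{F}\backslash X_n$ is isomorphic, as a $\sq{F}$-set, to $(F^\times)^2\backslash Y_{n-2}$. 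Thus it suffices to show that $(F^\times)^2\backslash Y_m$ is a \emph{free} $\sq{F}$-set whose set of orbits is naturally in bijection with $Z_{m-1}$; for then $\Z[(F^\times)^2\backslash Y_m]$ is the free $\sgr{F}$-module on $Z_{m-1}$, and specializing to $m=n-2$ gives the claim.

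The key observation is that the diagonal action of $F^\times$ on $Y_m$ is free: if $a\cdot(y_1,\ldots,y_m)=(y_1,\ldots,y_m)$ with $y_i\in F^\times$, then $ay_1=y_1$ forces $a=1$. Consequently $(F^\times)^2$ acts freely on $Y_m$, the quotient $(F^\times)^2\backslash Y_m$ carries a well-defined $\sq{F}=F^\times/(F^\times)^2$-action, and I claim this residual action is again free. Indeed, if $\an{a}$ fixes the class of some $y=(y_1,\ldots,y_m)$, then $a\cdot y=c\cdot y$ for some $c\in(F^\times)^2$, so $(ac^{-1})\cdot y=y$ and hence $a=c\in(F^\times)^2$ by freeness, i.e. $\an{a}=1$. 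The orbit set of this $\sq{F}$-action is then $\sq{F}\backslash\bigl((F^\times)^2\backslash Y_m\bigr)=F^\times\backslash Y_m$.

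Finally I would identify $F^\times\backslash Y_m$ with $Z_{m-1}$ by normalizing the first coordinate: sending the class of $(y_1,\ldots,y_m)$ to $(y_2/y_1,\ldots,y_m/y_1)$ is well defined and bijective, since the quotients are nonzero, pairwise distinct, and each different from $1$ (the last because $y_j\ne y_1$), with inverse $(z_1,\ldots,z_{m-1})\mapsto[(1,z_1,\ldots,z_{m-1})]$. Assembling the steps, $(F^\times)^2\backslash Y_m$ is a free $\sq{F}$-set with orbit set $Z_{m-1}$, so $\Z[(F^\times)^2\backslash Y_m]\cong\sgr{F}[Z_{m-1}]$ as $\sgr{F}$-modules, and with $m=n-2$ this yields $(\Z X_n)_{\spl{2}{F}}\cong\sgr{F}[Z_{n-3}]$. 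None of the steps is genuinely difficult; the only point requiring real care is verifying that freeness of the $F^\times$-action descends to freeness of the residual $\sq{F}$-action on the quotient, since this is precisely what guarantees the output is a \emph{free} $\sgr{F}$-module rather than merely a permutation module with nontrivial isotropy.
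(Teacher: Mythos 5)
Your argument is correct and is essentially the paper's own proof: the paper likewise invokes Lemma~\ref{lem:phin} and then simply writes down the explicit $\sgr{F}$-isomorphism $\Z[(F^\times)^2\backslash Y_{n-2}]\cong\sgr{F}[Z_{n-3}]$, $(y_1,\ldots,y_{n-2})\mapsto \an{y_1}\left[y_2/y_1,\ldots,y_{n-2}/y_1\right]$, which encodes exactly your normalization of the first coordinate together with the square class $\an{y_1}$. Your verification that the residual $\sq{F}$-action on $(F^\times)^2\backslash Y_{n-2}$ is free is precisely the (unstated) justification for why that formula is an isomorphism of free modules, so the two proofs coincide in substance.
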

\begin{proof}
By Lemma \ref{lem:phin} we have $\sgr{F}$-isomorphisms
\[
(\Z X_n)_{\spl{2}{F}}\cong \Z[\spl{2}{F}\backslash X_n]\cong \Z[(F^\times)^2\backslash Y_{n-2}].
\]
Finally, we have an $\sgr{F}$-isomorphism
\[
\Z[(F^\times)^2\backslash Y_{n-2}]\cong\sgr{F}[Z_{n-3}]
\]
via the map
\[
(y_1,\ldots,y_{n-2})\mapsto \an{y_1}\left[\frac{y_2}{y_1},\ldots,\frac{y_{n-2}}{y_1}\right].
\]
\end{proof}

It follows that the $\sgr{F}$-isomorphism $(\Z X_n)_{\spl{2}{F}}\cong \sgr{F}[Z_{n-3}]$ is given by
\[
(x_1,\ldots,x_n)\mapsto \an{\phi(x_1,x_2,x_3)}\left[\frac{\phi(x_1,x_2,x_4)}{\phi(x_1,x_2,x_3)},\ldots,
\frac{\phi(x_1,x_2,x_n)}{\phi(x_1,x_2,x_3)}\right].
\]

In particular, we have $\sgr{F}$-isomorphisms 
\[
(\Z X_3)_{\spl{2}{F}}\cong \sgr{F},\quad (\Z X_4)_{\spl{2}{F}}\cong \sgr{F}[F^\times\setminus\{ 1\}],\quad 
(\Z X_5)_{\spl{2}{F}}\cong \sgr{F}[Z_2] 
\] 

Note that taking $\sq{F}$-coinvariants of the terms in Corollary \ref{cor:xnsl} we obtain
\begin{cor}\label{cor:xngl}
For all $n\geq 3$ there is an isomorphism of groups
\[
(\Z X_n )_{\gl{2}{F}}\cong \Z [Z_{n-3}].
\]
\end{cor}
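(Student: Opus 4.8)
The plan is to obtain this as an immediate consequence of Corollary \ref{cor:xnsl}, since the statement is exactly what results from passing from $\spl{2}{F}$-coinvariants to $\gl{2}{F}$-coinvariants. The one structural input I would use is the standard behaviour of the coinvariants functor along a group extension: for the exact sequence $1\to\spl{2}{F}\to\gl{2}{F}\to F^\times\to 1$ and any $\gl{2}{F}$-module $M$, there is a natural isomorphism
\[
M_{\gl{2}{F}}\cong \left( M_{\spl{2}{F}}\right)_{F^\times},
\]
where the right-hand $F^\times$-action is the residual action of the quotient $F^\times=\gl{2}{F}/\spl{2}{F}$ on the $\spl{2}{F}$-coinvariants (this is the $H_0$-edge of the Lyndon--Hochschild--Serre formalism, i.e. the identity $H_0(G,M)=H_0(Q,H_0(N,M))$).

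Applying this with $M=\Z X_n$, the first step is to invoke Corollary \ref{cor:xnsl} to identify $\left(\Z X_n\right)_{\spl{2}{F}}\cong\sgr{F}[Z_{n-3}]$ as $\sgr{F}$-modules, for $n\geq 3$. The second point to check is that the residual $F^\times$-action computing the outer coinvariants is exactly the $\sgr{F}$-module structure appearing in that corollary. By the discussion preceding Lemma \ref{lem:phin}, the action of $F^\times$ on $\left(\Z X_n\right)_{\spl{2}{F}}$ factors through $\sq{F}=F^\times/(F^\times)^2$, so taking $F^\times$-coinvariants is the same as taking $\sq{F}$-coinvariants, i.e. applying $\Z\otimes_{\sgr{F}}(-)$ via the augmentation $\sgr{F}\to\Z$.

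It then remains to compute $\Z\otimes_{\sgr{F}}\sgr{F}[Z_{n-3}]$. Since $\sgr{F}[Z_{n-3}]$ is by definition the free $\sgr{F}$-module on the basis $Z_{n-3}$, this base change is free on the same index set over $\Z$, yielding $\Z[Z_{n-3}]$, which is the claimed isomorphism.

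I do not anticipate any genuine obstacle here: the content is entirely bookkeeping. The only point requiring care is the identification in the previous paragraph, namely that the $\sgr{F}$-structure produced in Corollary \ref{cor:xnsl} coincides with the residual $F^\times$-action, so that the two successive coinvariant operations compose correctly to give the $\gl{2}{F}$-coinvariants. Once that is matched up, the result follows formally from the freeness of $\sgr{F}[Z_{n-3}]$ over $\sgr{F}$.
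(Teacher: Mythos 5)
Your argument is correct and is essentially the paper's own proof: the paper obtains Corollary \ref{cor:xngl} precisely by taking $\sq{F}$-coinvariants of the isomorphism in Corollary \ref{cor:xnsl}, which is exactly your two-step coinvariants computation spelled out in more detail. The identification of the residual $F^\times$-action with the $\sgr{F}$-structure is already built into the paper's setup (the discussion following the exact sequence $1\to\pspl{2}{F}\to\pgl{2}{F}\to\sq{F}\to 1$), so no further checking is needed.
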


In particular, for $n=4$, the isomorphism $(\Z X_4)_{\gl{2}{F}}\cong \Z[F^\times\setminus\{ 1\}]$ is given by 
\[
(x_1,x_2,x_3,x_4)\mapsto \left[\frac{\phi(x_1,x_2,x_4)}{\phi(x_1,x_2,x_3)}\right]
\]
which is just the classical cross-ratio map.

Now it follows from the calculations above that the map
\begin{eqnarray*}
\xymatrix{\sgr{F}[Z_2]\ar[r]^-{\cong}&(\Z X_5)_{\spl{2}{F}}\ar[r]^-{\bar{\delta}}&
(\Z X_4)_{\spl{2}{F}}\ar[r]^-{\cong}&\sgr{F}[Z_1]}
\end{eqnarray*}
is the  $\sgr{F}$-module homomorphism
\begin{eqnarray*}
[x,y]\quad \mapsto \quad [x]-[y]+\an{x}\left[ y/x\right]-\an{x^{-1}-1}\left[(1-x^{-1})/(1-y^{-1}\right]
+\an{1-x}\left[(1-x)/(1-y)\right]
\end{eqnarray*}
(since  $\phi(\infty,1,a)=(1-a)^{-1}$, $\phi(0,1,a)=(a^{-1}-1)^{-1}$ and $\phi(0,\infty,a)=a$.)
Thus we have:
\begin{lem}\label{lem:pbpres}
The refined pre-Bloch group $\rpb{F}$ is the $\sgr{F}$-module with generators $\gpb{x}$, $x\in F^\times$ 
subject to the relations $\gpb{1}=0$ and 
\[
S_{x,y}:\quad 0=[x]-[y]+\an{x}\left[ y/x\right]-\an{x^{-1}-1}\left[(1-x^{-1})/(1-y^{-1})\right]
+\an{1-x}\left[(1-x)/(1-y)\right],\quad x,y\not= 1
\]
\end{lem}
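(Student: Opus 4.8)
The plan is to read the presentation straight off the cokernel description, since nearly all of the substantive work has already been carried out in the computation immediately preceding the statement. First I would recall that, by definition, $\rpb{F}=\coker{\bar{\delta}}$, and that Corollary \ref{cor:xnsl} supplies the $\sgr{F}$-isomorphisms $(\Z X_4)_{\spl{2}{F}}\cong\sgr{F}[Z_1]$ and $(\Z X_5)_{\spl{2}{F}}\cong\sgr{F}[Z_2]$, in which $Z_1=F^\times\setminus\{1\}$ and $Z_2$ is the set of ordered pairs $[x,y]$ of distinct elements of $F^\times\setminus\{1\}$. Since $\bar{\delta}$ is a map of $\sgr{F}$-modules, its image is the $\sgr{F}$-submodule generated by the elements $\bar{\delta}([x,y])$, and hence $\rpb{F}$ is the $\sgr{F}$-module generated by the symbols $\gpb{z}$, $z\in F^\times\setminus\{1\}$, modulo that submodule.

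Next I would invoke the explicit formula for $\bar{\delta}$ established just above the statement: for $[x,y]\in Z_2$ the image $\bar{\delta}([x,y])$ is precisely the expression $S_{x,y}$. This yields at once that $\rpb{F}$ has generators $\gpb{z}$ with $z\neq 1$ and defining relations $S_{x,y}$ for all distinct $x,y\neq 1$. One small point to check here is that every argument occurring inside a bracket in $S_{x,y}$ genuinely lies in $F^\times\setminus\{1\}$, so that the relation is an equation among bona fide generators: the hypotheses $x,y\neq 1$ force $1-x$, $1-y$, $1-x^{-1}$, $1-y^{-1}$ to be nonzero, while $x\neq y$ forces each of the quotients $y/x$, $(1-x^{-1})/(1-y^{-1})$ and $(1-x)/(1-y)$ to differ from $1$.

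Finally I would reconcile this presentation with the one asserted in the statement, which differs only cosmetically in two respects. The statement adjoins an extra generator $\gpb{1}$ together with the relation $\gpb{1}=0$; since adjoining a generator and immediately killing it gives an isomorphic module, this changes nothing. Once $\gpb{1}=0$ is available, the degenerate case $x=y$ of $S_{x,y}$ reduces, upon substituting $y=x$, to
\[
S_{x,x}:\quad \an{x}\gpb{1}-\an{x^{-1}-1}\gpb{1}+\an{1-x}\gpb{1}=0,
\]
which holds automatically; hence the restriction $x\neq y$ may be dropped and the relations stated uniformly for all $x,y\neq 1$, exactly as in the statement. Because the genuine content — the identification of the orbit sets and the evaluation of $\bar{\delta}$ — is completed before the statement, I do not expect any real obstacle; the only thing requiring attention is this bookkeeping around the symbol $\gpb{1}$ together with the verification that the bracket arguments avoid $1$.
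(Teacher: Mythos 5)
Your proposal is correct and follows exactly the paper's route: the lemma is read off from the cokernel description $\rpb{F}=\coker{\bar{\delta}}$ together with Corollary \ref{cor:xnsl} and the explicit formula for $\bar{\delta}$ computed immediately beforehand. The extra bookkeeping you supply (checking the bracket arguments avoid $1$, and noting that with $\gpb{1}=0$ the degenerate relation $S_{x,x}$ is automatic) is sound and merely makes explicit what the paper leaves tacit.
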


Of course, by definition, we have $\pb{F}=(\rpb{F})_{F^\times}=\ho{0}{F^\times}{\rpb{F}}$.

\subsection{The module $\rasym{2}{\Z}{F^\times}$ and the refined Bloch group of a field}

\begin{lem}\label{lem:sym}
Let $G$ be an abelian group. There is a natural short exact sequence of $\igr{G}$-modules
\[
0\to\aug{G}^3\to\aug{G}^2\to\sym{2}{\Z}{G}\to 0
\] 
(where $G$ acts trivially on the fourth term).
\end{lem}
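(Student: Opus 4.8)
The plan is to construct the surjection $\aug{G}^2\twoheadrightarrow\sym{2}{\Z}{G}$ explicitly, reduce the whole statement to the injectivity of one auxiliary map $\theta_G$, and then prove that injectivity by comparison with the case of a free abelian group, where it can be checked by hand. Write $e_g:=g-1\in\aug{G}$, so that $\aug{G}$ is $\Z$-free on $\{e_g\mid g\neq 1\}$ and $\aug{G}^2$ is $\Z$-spanned by the products $e_ge_h$. Since $\igr{G}$ is commutative and $e_{gg'}\equiv e_g+e_{g'}\pmod{\aug{G}^2}$, the assignment $(g,h)\mapsto e_ge_h\bmod\aug{G}^3$ is $\Z$-bilinear and symmetric, so it induces a homomorphism $\theta_G\colon\sym{2}{\Z}{G}\to\aug{G}^2/\aug{G}^3$, $g\symm h\mapsto e_ge_h$, which is surjective because the $e_ge_h$ span $\aug{G}^2$. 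As $(g-1)\aug{G}^2\subseteq\aug{G}^3$, the group $G$ acts trivially on $\aug{G}^2/\aug{G}^3$, so the projection $\aug{G}^2\twoheadrightarrow\aug{G}^2/\aug{G}^3$ is $\igr{G}$-linear, and its kernel is $\aug{G}^3$ by definition. Hence the asserted sequence is exact and $\igr{G}$-linear once we know that $\theta_G$ is an isomorphism, i.e. that it is injective.

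Next I would settle the case $G=L$ free abelian, with $\Z$-basis $\{x_i\}$, by producing a left inverse to $\theta_L$ directly. The ring homomorphism $\igr{L}\to A$ determined by $x_i\mapsto 1+t_i$ (so $x_i^{-1}\mapsto 1-t_i+t_i^2$), where $A=\Z[t_i]/(\text{monomials of degree}\geq 3)$, carries $\aug{L}$ into the ideal $(t_i)$, kills $\aug{L}^3$, and sends $e_{x_i}e_{x_j}\mapsto t_it_j$. Since the monomials $t_it_j$ ($i\leq j$) are $\Z$-linearly independent in $A$, the classes $e_{x_i}e_{x_j}\bmod\aug{L}^3$ are independent; together with surjectivity this shows $\theta_L$ is an isomorphism, so that $\sym{2}{\Z}{L}\cong\aug{L}^2/\aug{L}^3$.

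For general $G$ I would choose a surjection $p\colon L\twoheadrightarrow G$ from a free abelian group, with kernel $K$, and compare $\theta_L$ and $\theta_G$ in the evident commutative square. Both vertical maps $\sym{2}{\Z}{L}\to\sym{2}{\Z}{G}$ and $\aug{L}^2/\aug{L}^3\to\aug{G}^2/\aug{G}^3$ are surjective, the former with kernel the image of $K\otimes L\to\sym{2}{\Z}{L}$, which I denote $K\symm L$, and $\theta_L$ is an isomorphism by the previous step; a diagram chase then shows $\theta_G$ is injective provided
\[
\ker{\aug{L}^2/\aug{L}^3\to\aug{G}^2/\aug{G}^3}\ \subseteq\ \theta_L\bigl(K\symm L\bigr).
\]
Writing $\mathfrak{a}=\ker{\igr{L}\to\igr{G}}$ for the ideal generated by $\{e_k\mid k\in K\}$, the left-hand kernel is exactly the image of $\mathfrak{a}\cap\aug{L}^2$ in $\aug{L}^2/\aug{L}^3$, so the task is to analyse this intersection modulo $\aug{L}^3$. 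This last inclusion is the main obstacle. I would handle it by writing a typical element of $\mathfrak{a}$ as $\sum_i r_ie_{k_i}$ with $k_i\in K$, splitting $r_i=n_i+s_i$ with $n_i\in\Z$ and $s_i\in\aug{L}$, and observing that the terms $s_ie_{k_i}$ already lie in $\aug{L}^2$ with $\theta_L^{-1}(s_ie_{k_i}\bmod\aug{L}^3)\in K\symm L$, while membership of the whole element in $\aug{L}^2$ forces $\prod_ik_i^{n_i}=1$ in $L$. The congruence
\[
\prod_ik_i^{n_i}-1\equiv\sum_in_ie_{k_i}+\sum_i\binom{n_i}{2}e_{k_i}^2+\sum_{i<j}n_in_je_{k_i}e_{k_j}\pmod{\aug{L}^3},
\]
valid for all $n_i\in\Z$ via $k^n\equiv 1+ne_k+\binom{n}{2}e_k^2$, then shows, since its left side vanishes, that $\sum_in_ie_{k_i}\bmod\aug{L}^3$ equals $\theta_L$ of an explicit element of $K\symm K\subseteq K\symm L$. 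This gives the required inclusion and hence the injectivity of $\theta_G$.

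Naturality of the sequence in $G$ is clear from the construction of $\theta_G$, and the choice of free cover causes no difficulty: if one prefers, one first reduces to finitely generated $G$ using that both $\sym{2}{\Z}{-}$ and $\aug{(-)}^2/\aug{(-)}^3$ commute with the filtered colimit over finitely generated subgroups. The only genuinely delicate point is the displayed congruence together with the verification that all of its degree-two terms have both factors in $K$; everything else is formal.
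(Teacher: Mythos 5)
Your argument is correct, and it reaches the conclusion by a genuinely different route than the paper. The paper's proof is homological: it invokes the general right-exact sequence $\sym{n+1}{R}{I}\to\sym{n}{R}{I}\to\sym{n}{R/I}{I/I^2}\to 0$ for an ideal $I\subset R$, and then the real work is the isomorphism $\sym{2}{\igr{G}}{\aug{G}}\cong\aug{G}^2$, obtained by tensoring $0\to\aug{G}\to\igr{G}\to\Z\to 0$ with $\aug{G}$ and identifying $\tor{\igr{G}}{1}{\Z}{\aug{G}}\cong\ho{2}{G}{\Z}\cong\Extpow{2}{\Z}{G}$, with the boundary map landing exactly on the antisymmetric tensors $\pf{g_1}\otimes\pf{g_2}-\pf{g_2}\otimes\pf{g_1}$; the image of $\sym{3}{\igr{G}}{\aug{G}}$ is then read off as $\aug{G}^3$. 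You instead build the comparison map $\theta_G\colon\sym{2}{\Z}{G}\to\aug{G}^2/\aug{G}^3$ directly, verify it in the free case by mapping $\igr{L}$ to a truncated polynomial ring, and descend to arbitrary $G$ through a free presentation, where the only nontrivial point is the inclusion $\ker{\aug{L}^2/\aug{L}^3\to\aug{G}^2/\aug{G}^3}\subseteq\theta_L(K\symm L)$, which your binomial congruence $k^n\equiv 1+ne_k+\binom{n}{2}e_k^2\pmod{\aug{L}^3}$ (valid for all $n\in\Z$) settles; I checked that step and it is sound, including the observation that $\sum_i n_ie_{k_i}\in\aug{L}^2$ forces $\prod_ik_i^{n_i}=1$ by looking at the image in $\aug{L}/\aug{L}^2\cong L$, and that the resulting degree-two correction terms all lie in $K\symm K$. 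What each approach buys: the paper's argument is shorter once one accepts the identification of $\ho{1}{G}{\aug{G}}$ and the explicit form of the connecting map (which it leaves as "a straightforward calculation"), and it simultaneously explains \emph{why} the symmetric rather than full tensor square appears (the obstruction is exactly $\Extpow{2}{\Z}{G}$); your argument is entirely elementary and self-contained, at the cost of the free-presentation bookkeeping, and it also produces the inverse isomorphism $\theta_G$ explicitly, which is occasionally useful. Both yield naturality in $G$ without difficulty.
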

\begin{proof}
In fact if $R$ is any commutative ring and $I$ an ideal in $R$, then there is a natural exact sequence
\[
\xymatrix{
\sym{n+1}{R}{I}\ar[r]^-{\eta}
&\sym{n}{R}{I}\ar[r]
&\sym{n}{R/I}{I/I^2}\ar[r]
&0
}
\]
where $\eta(a_0\symm a_1\symm \cdots \symm a_n)=a_0\cdot(a_1\symm \cdots \symm a_n)=(a_0a_1)\symm \cdots \symm a_n$.

In the particular case $n=2$, $R=\igr{G}$, $I=\aug{G}$ this gives an exact sequence 
\[
\sym{3}{\igr{G}}{\aug{G}}\to\sym{2}{\igr{G}}{\aug{G}}\to\sym{2}{\Z}{{G}}\to 0
\]
since $\aug{G}/\aug{G}^2\cong G$.

Now there is a natural surjective homomorphism of graded $\igr{G}$-algebras
\begin{eqnarray*}
\sym{\bullet}{\igr{G}}{\aug{G}}&\to &\aug{G}^\bullet,\\
 a_1\symm\cdots \symm a_n&\mapsto& \pf{a_1}\cdots\pf{a_n}.
\end{eqnarray*}
This is an isomorphism in dimension $2$ (and,for trivial reasons, in dimensions $0$ and $1$).  To see this, apply the functor 
$-\otimes_{\igr{G}}\aug{G}$ to the short exact sequence 
\[
0\to \aug{G}\to\igr{G}\to \Z\to 0
\]
to obtain the exact sequence
\[
0\to \tor{\igr{G}}{1}{\Z}{\aug{G}}\to\tens{2}{\igr{G}}{\aug{G}}\to \aug{G}^2\to 0.
\]
But $\tor{\igr{G}}{1}{\Z}{\aug{G}}=\ho{1}{G}{\aug{G}}\cong \ho{2}{G}{\Z}\cong \Extpow{2}{\Z}{G}$. A straightforward calculation
now shows that the map 
\[
\Extpow{2}{\Z}{G}\cong\tor{\igr{G}}{1}{\Z}{\aug{G}}\to \tens{2}{\igr{G}}{\aug{G}}
\]
sends $g_1\wedge g_2$ to $\pf{g_1}\otimes\pf{g_2}-\pf{g_2}\otimes\pf{g_1}$. 

Finally, we observe that the image of the map
\[
\sym{3}{\igr{G}}{\aug{G}}\to\sym{2}{\igr{G}}{\aug{G}}\cong \aug{G}^2
\]
is clearly $\aug{G}^3$.

\end{proof}
\begin{rem}
It is a straightforward matter to verify that $\sym{\bullet}{\igr{G}}{\aug{G}}$ has the following presentation as a graded 
ring:  It is generated in degree $1$ by the elements $\pf{g}$, $g\in G$, subject to the relations
\begin{enumerate}
\item[(N)] $\pf{1}=0$
\item[(R)] $\pf{g_1}\symm\pf{g_2g_3}+\pf{g_2}\symm\pf{g_3}=\pf{g_1g_2}\symm\pf{g_3}+\pf{g_1}\symm\pf{g_2}$ for all $g_1,g_2,g_3\in G$. 
\item[(S)] $\pf{g_1}\symm\pf{g_2}=\pf{g_2}\symm\pf{g_1}$ for all $g_1,g_2\in G$
\end{enumerate}

For abelian groups the surjective homomorphism of graded rings 
$\alpha:\sym{\bullet}{\igr{G}}{\aug{G}}\to\aug{G}^\bullet$ is not generally injective in dimensions greater 
than $2$.  However, the following is known:  

If $G$ is either torsion-free or cyclic then $\alpha$ is an isomorphism (see Bak and Tang \cite{bak:tang}). 
On the other hand, if $G$ is an elementary 
abelian $2$-group, then the kernel of $\alpha$ is the ideal generated by the degree $3$ terms $\pf{g_1}\symm\pf{g_2}\symm\pf{g_1g_2}$ 
for $g_1,g_2\in G$  (see Bak and Vavilov \cite{bak:vavilov}). It is easy to see that these latter terms are nonzero 
(by considering their image in $\sym{3}{\Z/2}{G}$, for example). 
\end{rem}

Applying Lemma \ref{lem:sym} to the case $G=\sq{F}$ gives:

\begin{cor} \label{cor:sym}
Let $F$ be a field. There is a natural exact sequence of $\sgr{F}$-modules 
\[
0\to \aug{F}^3\to\aug{F}^2\to \sym{2}{\F{2}}{\sq{F}}\to 0.
\]
\end{cor}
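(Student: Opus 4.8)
The plan is to read off the sequence as the special case $G=\sq{F}$ of Lemma \ref{lem:sym}. For this choice of $G$ we have $\igr{G}=\igr{\sq{F}}=\sgr{F}$ by the definition of $\sgr{F}$, and the augmentation ideal $\aug{\sq{F}}$ of $\sgr{F}$ is exactly $\aug{F}$. Thus Lemma \ref{lem:sym} yields directly a natural short exact sequence of $\sgr{F}$-modules
\[
0\to \aug{F}^3\to\aug{F}^2\to \sym{2}{\Z}{\sq{F}}\to 0,
\]
the naturality in $F$ being inherited from the naturality in $G$ asserted in Lemma \ref{lem:sym}.

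It then remains only to identify the third term $\sym{2}{\Z}{\sq{F}}$ with $\sym{2}{\F{2}}{\sq{F}}$. Here I would use that $\sq{F}=F^\times/(F^\times)^2$ is an elementary abelian $2$-group, hence naturally an $\F{2}$-vector space. First I would verify that for any $\F{2}$-vector space $V$ the canonical surjection $V\otimes_\Z V\to V\otimes_{\F{2}}V$ is an isomorphism: writing a scalar $a\in\F{2}$ as the class of an integer, the relations $av\otimes w=v\otimes aw$ defining the $\F{2}$-tensor product already hold in $V\otimes_\Z V$, so the surjection has trivial kernel. Passing to the quotient by the symmetry relations $v\otimes w-w\otimes v$ on each side then gives $\sym{2}{\Z}{V}\cong\sym{2}{\F{2}}{V}$; applying this with $V=\sq{F}$ finishes the argument.

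Since everything is formal, there is no genuine obstacle. The single point deserving care is the base-change identity $\sym{2}{\Z}{\sq{F}}\cong\sym{2}{\F{2}}{\sq{F}}$, which relies precisely on $\sq{F}$ being $2$-torsion.
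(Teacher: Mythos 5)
Your proposal is correct and follows exactly the paper's route: the paper's entire proof is to apply Lemma \ref{lem:sym} with $G=\sq{F}$, leaving the identification $\sym{2}{\Z}{\sq{F}}\cong\sym{2}{\F{2}}{\sq{F}}$ (valid because $\sq{F}$ is an elementary abelian $2$-group) implicit, whereas you spell that step out.
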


On the other hand,  clearly there is also a natural  homomorphism of additive groups 
\[
\xymatrix{
\asym{2}{\Z}{F^\times}\ar@{>>}[r]
&\asym{2}{\Z}{F^\times}\otimes \Z/2\ar[r]^-{\cong}
&\sym{2}{\F{2}}{\sq{F}}.
}
\]

For any field $F$ we define the $\sgr{F}$-module
\begin{eqnarray*}
\rasym{2}{\Z}{F^\times}:=& \aug{F}^2\times_{\sym{2}{\F{2}}{\sq{F}}}\asym{2}{\Z}{F^\times}\subset \aug{F}^2\oplus \asym{2}{\Z}{F^\times}
\end{eqnarray*}
where $\asym{2}{\Z}{F^\times}$ has the trivial $\sgr{F}$-module structure.

Given $a,b\in F^\times$, we let $\rsf{a}{b}$ denote the element 
\[
\rsf{a}{b}:= (\pf{a}\pf{b},a\asymm b)\in \rasym{2}{\Z}{F^\times}. 
\]

\begin{lem} Let $F$ be a field.
\begin{enumerate}
\item $\aug{F}\rasym{2}{\Z}{F^\times}\cong\aug{F}^3$
\item $\rasym{2}{\Z}{F^\times}_{F^\times}\cong \asym{2}{\Z}{F^\times}$
\item $\rasym{2}{\Z}{F^\times}$ is generated as an $\sgr{F}$-module by the elements $\rsf{a}{b}$, $a,b\in F^\times$.
\end{enumerate}
\end{lem}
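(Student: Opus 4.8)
The plan is to work directly from the fibre-product description, reducing all three statements to a single computation of $\aug{F}$ acting on the module. Write $P:=\rasym{2}{\Z}{F^\times}$, let $\alpha\colon\aug{F}^2\to\sym{2}{\F{2}}{\sq{F}}$ be the surjection of Corollary~\ref{cor:sym} (whose kernel is $\aug{F}^3$), and let $\beta\colon\asym{2}{\Z}{F^\times}\to\sym{2}{\F{2}}{\sq{F}}$ be the reduction-mod-$2$ surjection. First I would verify that both maps are $\sgr{F}$-linear when the target carries the trivial action: for $\beta$ this is automatic, since source and target are trivial modules, while for $\alpha$ it holds because $\an{x}u-u=\pf{x}u\in\aug{F}^3=\ker{\alpha}$ for every $u\in\aug{F}^2$. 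Hence $P$ really is an $\sgr{F}$-submodule of $\aug{F}^2\oplus\asym{2}{\Z}{F^\times}$ and both coordinate projections are $\sgr{F}$-linear.

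The structural input I would record next is the short exact sequence coming from the second projection. Since $\alpha$ is surjective, the map $P\to\asym{2}{\Z}{F^\times}$, $(u,w)\mapsto w$, is surjective with kernel $\ker{\alpha}\oplus 0=\aug{F}^3\oplus 0$, giving
\[
0\to\aug{F}^3\to P\to\asym{2}{\Z}{F^\times}\to 0
\]
of $\sgr{F}$-modules, the last term trivial. The crucial observation is that because the antisymmetric-product coordinate of $P$ has trivial $\sgr{F}$-structure, $\pf{x}$ annihilates it, so $\aug{F}$ acts on $P$ only through the first coordinate: $\pf{x}\cdot(u,w)=(\pf{x}u,0)$. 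As $\alpha$ is onto, the first projection $P\to\aug{F}^2$ is onto, so $u$ ranges over all of $\aug{F}^2$; therefore $\aug{F}P=\aug{F}\cdot\aug{F}^2\oplus 0=\aug{F}^3\oplus 0$, which the first projection carries $\sgr{F}$-isomorphically onto $\aug{F}^3$. This is (1). Since this submodule is exactly the kernel of $P\to\asym{2}{\Z}{F^\times}$, we get $P_{F^\times}=P/\aug{F}P\cong\asym{2}{\Z}{F^\times}$, which is (2).

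For (3) I would let $M\subseteq P$ be the $\sgr{F}$-submodule generated by the $\rsf{a}{b}=(\pf{a}\pf{b},a\asymm b)$. Their images $a\asymm b$ generate $\asym{2}{\Z}{F^\times}$, so $M$ surjects onto $\asym{2}{\Z}{F^\times}$ and hence $M+(\aug{F}^3\oplus 0)=P$. It then remains only to see $\aug{F}^3\oplus 0\subseteq M$: applying $\pf{x}$ yields $\pf{x}\cdot\rsf{a}{b}=(\pf{x}\pf{a}\pf{b},0)\in \aug{F}M\subseteq M$, and since the basis elements $\pf{x}$ generate $\aug{F}$, the products $\pf{x}\pf{a}\pf{b}$ span $\aug{F}^3$. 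Thus $\aug{F}^3\oplus 0\subseteq M$ and $M=P$.

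The only point requiring genuine care is the bookkeeping between the two module structures: checking that $\alpha$ is $\sgr{F}$-linear into the trivially-acted target, and tracking the fact that $\aug{F}$ kills the antisymmetric-product coordinate. Once these are in place, all three assertions collapse onto the single identity $\aug{F}P=\aug{F}^3\oplus 0$, and the rest is formal.
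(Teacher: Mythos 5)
Your proof is correct and follows essentially the same route as the paper: both arguments rest on the observation that $\pf{x}\cdot(u,w)=(\pf{x}u,0)$, identify $\aug{F}\rasym{2}{\Z}{F^\times}$ with $\aug{F}^3$, read off the coinvariants from the resulting short exact sequence, and prove (3) by showing the submodule generated by the $\rsf{a}{b}$ contains $\aug{F}^3$ and surjects onto $\asym{2}{\Z}{F^\times}$. The only (harmless) slip is that surjectivity of the first projection $P\to\aug{F}^2$ follows from surjectivity of $\beta$, not of $\alpha$; since both maps are onto, nothing is affected.
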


\begin{proof}
\begin{enumerate}
\item We have an injective homomorphism 
\[
 \aug{F}^3\to \aug{F}^2\oplus \asym{2}{\Z}{F^\times},\quad x\mapsto (x,0).
\]
 But, since $x$ maps to $0$ in 
$\sym{2}{\F{2}}{\sq{F}}=\aug{F}^2/\aug{F}^3$, in fact $\aug{F}^3\subset \rasym{2}{\Z}{F^\times}$. 

On the other hand, if $a,b,c\in F^\times$, then $\pf{a}\rsf{b}{c}=(\pf{a}\pf{b}\pf{c},0)$. 
So\\
 $\aug{F}^3\subset \aug{F}\rasym{2}{\Z}{F^\times}$.  

Conversely, if $(x,y)\in \rasym{2}{\Z}{F^\times}$ and $a\in F^\times$, then $\pf{a}(x,y)=(\pf{a}x,0)\in \aug{F}^3$; i.e.  
$\aug{F}\rasym{2}{\Z}{F^\times}\subset \aug{F}^3$.

\item Suppose that $(x,y)$ lies in the kernel of the surjective $\sgr{F}$-homomorphism $\rasym{2}{\Z}{F^\times}\to
\asym{2}{\Z}{F^\times}$. Then $y=0$ and thus $x$ maps to $0$ in $\sym{2}{\F{2}}{\sq{F}}$. By Corollary \ref{cor:sym}, 
$x\in\aug{F}^3=$ and hence $(x,y)\in \aug{F}\rasym{2}{\Z}{F^\times}$.

Observe that it follows that there is a natural short exact sequence of $\sgr{F}$-modules
\[
0\to \aug{F}^3\to \rasym{2}{\Z}{F^\times}\to\asym{2}{\Z}{F^\times}\to 0.
\]

\item Let $K(F)$ be the $\sgr{F}$-submodule of $\rasym{2}{\Z}{F^\times}$ generated by the elements 
$\rsf{a}{b}$. Since 
$\pf{a}\rsf{b}{c}=\pf{a}\pf{b}\pf{c}\in\aug{F}^3\subset \rasym{2}{\Z}{F^\times}$, it follows that $\aug{F}^3\subset K(F)$. 
On the other hand the homomorphism $\rasym{2}{\Z}{F^\times}\to \asym{2}{\Z}{F^\times}$ maps 
$K(F)$ onto $\asym{2}{\Z}{F^\times}$, since 
the latter is generated by the elements $a\asymm b$. Thus $K(F)=\rasym{2}{\Z}{F^\times}$ as required. 
\end{enumerate}
\end{proof}

Observe that the $\sgr{F}$-module structure on $\rasym{2}{\Z}{F^\times}$ is given by the formula
\[
\an{b}\rsf{a}{c}=\rsf{ab}{c}-\rsf{b}{c}=\rsf{a}{bc}-\rsf{a}{b}.
\]

We define the \emph{refined Bloch-Wigner homomorphism} $\bw$ to be the $\sgr{F}$-module homomorphism
\[
\bw :\rpb{F}\to\rasym{2}{\Z}{F},\qquad \gpb{x}\mapsto \rsf{1-x}{x}.
\]

In view of the definition of $\rasym{2}{\Z}{F^\times}$, we can express $\bw= (\lambda_1,\lambda_2)$ where 
$\lambda_1:\rpb{F}\to \aug{F}^2$ is the map $\gpb{x}\mapsto \pf{1-x}\pf{x}$, and $\lambda_2$ is the composite
\[
\xymatrix{
\rpb{F}\ar@{>>}[r]
&\pb{F}\ar[r]^-{\lambda}
&\asym{2}{\Z}{F^\times}.
}
\] 

It is a tedious calculation to verify directly $\lambda_1$ is a well-defined homomorphism of $\sgr{F}$-modules. However, we will see 
below that $\lambda_1$ arises naturally as a differential in a spectral sequence.

Recall that the homology groups $\ho{k}{\spl{2}{F}}{\Z}$ are naturally $\sgr{F}$-modules for all $k$. 

\begin{thm}\label{thm:mat}
For any field $F$ with at least $10$ elements, there is a natural surjective $\sgr{F}$-module homomorphism
\[
\rasym{2}{\Z}{F^\times}\to \ho{2}{\spl{2}{F}}{\Z}
\]
inducing an isomorphism $\coker{\bw}\cong \ho{2}{\spl{2}{F}}{\Z}$.
\end{thm}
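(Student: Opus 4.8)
The plan is to compute $\ho{2}{\spl{2}{F}}{\Z}$ directly from the action of $\spl{2}{F}$ on configurations of points of $\projl{F}$, and to read the statement off the resulting spectral sequence; this is also the setting in which $\bw$ (and its component $\lambda_1$) appears as a differential, as promised in the discussion preceding the theorem. Concretely I would form the complex of $\spl{2}{F}$-modules $C_p:=\Z X_{p+1}$, $p\geq 0$, with the alternating face differential $\bar{\delta}$, augmented to $\Z$. For $|F|\geq 10$ this augmented complex is exact in the low degrees we need, since any short tuple of points of $\projl{F}$ can be completed by a further point avoiding the finitely many forbidden cross-ratio values; this is the standard acyclicity of the configuration complex, and the bound on $|F|$ is exactly what guarantees enough of it. Filtering by $p$ then yields a first-quadrant spectral sequence
\[
E^1_{p,q} = \ho{q}{\spl{2}{F}}{\Z X_{p+1}} \Longrightarrow \ho{p+q}{\spl{2}{F}}{\Z},
\]
computing the target in total degree $\leq 3$. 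Each $\Z X_{p+1}$ is a permutation module, so by Shapiro's lemma $E^1_{p,q}$ is a sum of homologies of point-stabilizers: a Borel subgroup $B$ for $p=0$, the diagonal torus $T\cong F^\times$ for $p=1$, and (up to the center) trivial stabilizers for $p\geq 2$; the $\sgr{F}$-module structure is the permutation of $\sq{F}$-orbits recorded in Corollary \ref{cor:xnsl}.

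The bottom row is already under control: Corollary \ref{cor:xnsl} gives $E^1_{p,0}\cong\sgr{F}[Z_{p-2}]$ for $p\geq 2$, and the $d^1$-differentials are the explicit maps $\bar{\delta}$ computed just before Lemma \ref{lem:pbpres}. In particular $d^1\colon E^1_{4,0}\to E^1_{3,0}$ is the five-term map $[x,y]\mapsto S_{x,y}$, so that the bottom-row homology at $p=3$ recovers the refined pre-Bloch group $\rpb{F}$ of Lemma \ref{lem:pbpres} (after checking that $d^1\colon E^1_{3,0}\to E^1_{2,0}$ does not interfere). Thus $E^2_{3,0}\cong\rpb{F}$, and the plan is to identify the $d^2$-differential out of this slot with $\bw$.

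The crux is the torus column. Here $E^1_{1,q}=\ho{q}{\spl{2}{F}}{\Z X_2}$ reduces, via Shapiro applied to the stabilizer $T$, to the homology $\ho{q}{T}{\Z}$ carried along with its $\sq{F}$-action; since $\ho{1}{T}{\Z}\cong F^\times$ and $\ho{2}{T}{\Z}\cong\Extpow{2}{\Z}{F^\times}$, the relevant subquotient $E^2_{1,1}$ assembles, after the incoming $d^1$ from $E^1_{2,1}$ (homology of the stabilizer of a triple) and the $\sgr{F}$-structure, into the two pieces $\aug{F}^2$ and $\asym{2}{\Z}{F^\times}$ glued along $\sym{2}{\F{2}}{\sq{F}}$ by Lemma \ref{lem:sym} and Corollary \ref{cor:sym} --- that is, exactly the fibre product $\rasym{2}{\Z}{F^\times}$. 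I expect this identification $E^2_{1,1}\cong\rasym{2}{\Z}{F^\times}$ to be the main obstacle: it requires tracking the torus homology together with the Weyl-group (normalizer) twist, reconciling the exterior square $\ho{2}{T}{\Z}$ with the antisymmetric square $\asym{2}{\Z}{F^\times}$, and matching the resulting $\sgr{F}$-presentation with the fibre-product description. With this in hand, the $d^2$-differential $E^2_{3,0}\to E^2_{1,1}$ becomes a map $\rpb{F}\to\rasym{2}{\Z}{F^\times}$ whose $\aug{F}^2$-component is $\lambda_1$ and whose $\asym{2}{\Z}{F^\times}$-component is $\lambda_2$; i.e. it is exactly $\bw$.

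Finally I would assemble $\ho{2}{\spl{2}{F}}{\Z}$ from the three slots of total degree $2$. Showing $E^\infty_{0,2}=0$ and $E^\infty_{2,0}=0$ --- using the homology of the Borel $B$ in degree $2$ (where the unipotent radical forces the relevant vanishing for large $F$) and the bottom-row differentials --- leaves $\ho{2}{\spl{2}{F}}{\Z}=E^\infty_{1,1}$. Since the only surviving differential into position $(1,1)$ is $d^2$, we obtain
\[
\ho{2}{\spl{2}{F}}{\Z} = E^\infty_{1,1} = \coker{d^2\colon E^2_{3,0}\to E^2_{1,1}} = \coker{\bw}.
\]
The asserted natural surjection is then the projection $\rasym{2}{\Z}{F^\times}=E^2_{1,1}\twoheadrightarrow E^\infty_{1,1}=\ho{2}{\spl{2}{F}}{\Z}$, whose surjectivity and whose kernel $\image{\bw}$ are delivered simultaneously by the spectral sequence.
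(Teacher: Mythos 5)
Your overall strategy --- reading $\ho{2}{\spl{2}{F}}{\Z}$ off the spectral sequence of the configuration complex of $\projl{F}$ --- is genuinely different from the paper's proof, which uses no spectral sequence here: the paper treats finite fields by noting that $\ho{2}{\spl{2}{F}}{\Z}=0$ and $\coker{\bw}=\gwaug{F}^2=0$, and for infinite fields invokes the symplectic Matsumoto--Moore presentation of $\ho{2}{\spl{2}{F}}{\Z}$ together with Suslin's fibre-product description $\gwaug{F}^2\times_{\gwaug{F}^2/\gwaug{F}^3}\milk{2}{F}$, then verifies that the elements $\rsf{a}{b}$ satisfy the Matsumoto--Moore relations modulo $\image{\bw}$ (the key computation being $\bw(\gpb{a}+\an{-1}\gpb{a^{-1}})=\rsf{-a}{a}$). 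Unfortunately your route, as described, breaks down at exactly the step you flag as the crux.

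The slot you want to carry $\rasym{2}{\Z}{F^\times}$ cannot do so: in your indexing $E^1_{1,1}=\ho{1}{\spl{2}{F}}{\Z X_2}\cong\ho{1}{T}{\Z}\cong F^\times$, so $E^2_{1,1}$ is a subquotient of $F^\times$ and cannot be $\rasym{2}{\Z}{F^\times}$, whose quotient $\asym{2}{\Z}{F^\times}$ surjects onto $\milk{2}{F}$. (In the paper's own computation of this spectral sequence in Section 4, that slot is in fact zero.) Relatedly, the vanishing you claim for the Borel column in degree $2$ is false for infinite $F$: by Lemma \ref{lem:hob}, $\ho{2}{B}{\Z}\cong\ho{2}{T}{\Z}\cong\Extpow{2}{\Z}{F^\times}$, and the surviving subquotient there (a quotient of $\sextpow{2}{\Z}{F^\times}$) carries an essential part of $\ho{2}{\spl{2}{F}}{\Z}$. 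Two further problems: the bottom-row $E^2$-term at simplicial degree $3$ is not $\rpb{F}$ but $\rpbker{F}=\ker{\lambda_1}$, because the $d^1$ into the next column is (up to sign) $\lambda_1$; thus the two components of $\bw$ arise as differentials on different pages ($\lambda_1$ as a $d^1$, and $\lambda_2$, restricted to $\ker{\lambda_1}$, as a $d^3$ into the Borel column), and $\bw$ never appears as a single $d^2$ into a single slot. And even with the correct identifications, $\ho{2}{\spl{2}{F}}{\Z}$ is assembled from two nonzero graded pieces ($\gwaug{F}^2$ and a quotient of $\Extpow{2}{\Z}{F^\times}$), so a nontrivial extension problem remains and the isomorphism $\coker{\bw}\cong\ho{2}{\spl{2}{F}}{\Z}$ cannot be read off the spectral sequence alone --- which is precisely why the paper reaches for the Matsumoto--Moore presentation instead. (A minor point: the hypothesis $|F|\geq 10$ is there to exclude $\F{4}$ and $\F{9}$, for which $\ho{2}{\spl{2}{F}}{\Z}=\Z/p\not=0$, not to secure acyclicity of the configuration complex.)
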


\begin{proof}
Suppose first that $F$ is finite. Then $\ho{2}{\spl{2}{F}}{\Z}=0$. The statement of the theorem amounts to the surjectivity 
of $\bw$. 

For a finite field, since $F^\times$ is cyclic, $\asym{2}{\Z}{F^\times}=\sym{2}{\Z/2}{\sq{F}}$ and thus 
$\rasym{2}{\Z}{F^\times}=\aug{F}^2$. 

Recall that the Grothendieck-Witt ring of the field $F$ is the ring $\gw{F}=\sgr{F}/\gwrel{F}$, where $\gwrel{F}$ is the ideal generated by the 
elements $\pf{1-x}\pf{x}$. Thus $\coker{\bw}$ is $\gwaug{F}^2$, where $\gwaug{F}$ is the fundamental 
ideal $\aug{F}/\gwrel{F}$ in $\gw{F}$. 
However, it is well-known that $\gwaug{F}^2=0$ for any finite field $F$ (see, for example, \cite{milnor:huse}, section III.5).  

Thus we can suppose that $F$ is infinite.  In this case, the symplectic case of the theorem of 
Matsumoto and Moore (\cite{mat:pres}), gives a presentation of the 
group $\ho{2}{\spl{2}{F}}{\Z}$. It has the following form: 
The generators are symbols $\an{a_1,a_2}$, $a_i\in F^\times$, subject to the relations:
\begin{enumerate}
\item[(i)] $\an{a_1,a_2}=0$ if $a_i=1$ for some $i$
\item[(ii)] $\an{a_1,a_2}=\an{a_2^{-1}, a_1}$
\item[(iii)] $\an{a_{1},a_2a'_2} +\an{a_2,a'_2}=\an{a_{1}a_2,a'_2}+\an{a_1,a_2}$
\item[(iv)] $\an{a_1,a_2}=\an{a_1,-a_{1}a_2}$
\item[(v)] $\an{a_1,a_2}=\an{a_{1},(1-a_{1})a_2}$       
\end{enumerate}

Furthermore, Suslin has shown (\cite{sus:tors}, appendix) that for an infinite field $F$, there is an 
isomorphism of $\sgr{F}$-modules 
\[
\ho{2}{\spl{2}{F}}{\Z}\cong \gwaug{F}^2\times_{\gwaug{F}^2/\gwaug{F}^3}\milk{2}{F}, 
\quad \an{a,b}\leftrightarrow (\pf{a}\pf{b},\{ a,b\}).
\]

Now we have a map of diagrams of $\sgr{F}$-modules 
\[
\xymatrix{
&
\asym{2}{\Z}{F^\times}\ar[d]
&&&&
\milk{2}{F}\ar[d]\\
\aug{F}^2\ar[r]
&\sym{2}{\Z}{\sq{F}}
&
\ar@<4ex>[r]
&&
\gwaug{F}^2\ar[r]
&
\gwaug{F}^2/\gwaug{F}^3
}
\]
which induces a map of pullbacks
\[
\rasym{2}{\Z}{F^\times}\to \gwaug{F}^2\times_{\gwaug{F}^2/\gwaug{F}^3}\milk{2}{F}\cong \ho{2}{\spl{2}{F}}{\Z}
\]
sending the elements $\rsf{a}{b}$ to the elements $\an{a,b}$. 

This map is surjective, since the elements $\an{a,b}$ generate $\ho{2}{\spl{2}{F}}{\Z}$, and the image of $\bw$ is 
contained in its kernel 
since $\{1-x,x\}=0$ in $\milk{2}{F}$ and $\pf{1-x}\pf{x}=0$ in $\gwaug{F}^2$.  

To complete the proof of the theorem we must show that there is an $\sgr{F}$-homomorphism $\ho{2}{\spl{2}{F}}{\Z}\to \coker{\bw}$ sending 
$\an{a,b}$ to $\rsf{a}{b}\pmod{\image{\bw}}$; i.e. we must show that the elements $\rsf{a}{b}\in \rasym{2}{\Z}{F^\times}$ satisfy 
the Matsumoto-Moore relations modulo the image of $\bw$.

Now the elements $\rsf{a}{b}$ are easily seen to satisfy relations (i), (ii) and (iii). On the other hand, since 
$\rsf{a}{1-a}\equiv 0 \pmod{\image{\bw}}$ for all $a\in F^\times$, and since $\bw$ is an $\sgr{F}$-homomorphism, it follows that 
$0\equiv\an{b}\rsf{a}{1-a}\equiv\rsf{a}{(1-a)b}-\rsf{a}{b}\pmod{\image{\bw}}$ for all $a,b\in F^\times$. 

Now, for any $a\in F^\times$, $\bw(\gpb{a}+\an{-1}\gpb{a^{-1}})=\rsf{-a}{a}$, since 
\begin{eqnarray*}
\lambda_1(\gpb{a}+\an{-1}\gpb{a^{-1}})&=&\pf{1-a}\pf{a}+\an{-1}\pf{a(a-1)}\pf{a}\\
&=&\left(\pf{(1-a)a}-\pf{1-a}-\pf{a}\right)+\an{-1}\left(\pf{a-1}-\pf{a(a-1)}-\pf{a}\right)\\
&=&\pf{(1-a)a}-\pf{1-a}-\pf{a}+\pf{1-a}-\pf{a(1-a)}-\pf{-a}+\pf{-1}\\
&=&\pf{-1}-\pf{a}-\pf{-a}\\
&=&\pf{a}\pf{-a}
\end{eqnarray*}   
and
\begin{eqnarray*}
\lambda_2(\gpb{a}+\an{-1}\gpb{a^{-1}})&=&(1-a)\asymm a +(1-a^{-1})\asymm a^{-1}\\
=(1-a)\asymm a -\left(\frac{1-a}{-a}\right)\asymm a&=& (-a)\asymm a.
\end{eqnarray*}

Thus $\rsf{a}{-a}\equiv 0 \pmod{\image{\bw}}$ for all $a\in F^\times$ and hence 
$0\equiv \an{b}\rsf{a}{-a}\equiv \rsf{a}{-ab}-\rsf{a}{b}\pmod{\image{\bw}}$ for all $a,b\in F^\times$. Thus relations (iv) and (v) 
also hold in $\coker{\bw}$, and the theorem is proven.
\end{proof}
\begin{rem} The restriction to fields with at least $10$ elements is to rule out the exceptional cases of the field 
with $4$ and the field with $9$ elements for which $\ho{2}{\spl{2}{F}}{\Z}=\Z/p$ is nonzero (see 
Lemma \ref{lem:charp} below)
\end{rem}
Finally, we can define the 
\emph{refined Bloch group} of the field $F$ (with at least $4$ elements) to be the $\sgr{F}$-module 
\[
\rbl{F}:=\ker{\bw: \rpb{F}\to \rasym{2}{\Z}{F^\times}}.
\]

Thus we have:
\begin{cor}
For any field $F$ (with at least $10$ elements) there is an exact sequence of $\sgr{F}$-modules 
\[
0\to\rbl{F}\to \rpb{F}\to \rasym{2}{\Z}{F^\times}\to\ho{2}{\spl{2}{F}}{\Z}\to 0.
\]
\end{cor}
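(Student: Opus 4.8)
The plan is to deduce the four exactness statements one at a time, drawing everything substantive from Theorem \ref{thm:mat} together with the definition $\rbl{F}=\ker{\bw}$; no genuinely new argument is required.

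First I would dispose of the left half of the sequence. Exactness at $\rbl{F}$ asks only that the leftmost arrow, the inclusion $\rbl{F}\hookrightarrow\rpb{F}$, be injective, which holds by construction. Exactness at $\rpb{F}$ asks that the image of this inclusion coincide with $\ker{\bw}$; but $\rbl{F}$ is \emph{defined} to be $\ker{\bw}$, so this too is immediate. Thus the two left-hand spots cost nothing.

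For the right half I would invoke Theorem \ref{thm:mat}, which furnishes a surjective $\sgr{F}$-module homomorphism $\rasym{2}{\Z}{F^\times}\to\ho{2}{\spl{2}{F}}{\Z}$ inducing an isomorphism $\coker{\bw}\cong\ho{2}{\spl{2}{F}}{\Z}$. Surjectivity of this map is precisely exactness at the final term $\ho{2}{\spl{2}{F}}{\Z}$. And the assertion that the induced map on $\coker{\bw}$ is an isomorphism says exactly that the kernel of the surjection $\rasym{2}{\Z}{F^\times}\to\ho{2}{\spl{2}{F}}{\Z}$ equals $\image{\bw}$; this is exactness at $\rasym{2}{\Z}{F^\times}$.

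The only hypothesis to carry along is $\card{F}\geq 10$, which is exactly what Theorem \ref{thm:mat} requires in order to avoid the fields with $4$ and $9$ elements. I expect no real obstacle at this stage: all the work --- the symplectic Matsumoto-Moore presentation and Suslin's fibre-product description of $\ho{2}{\spl{2}{F}}{\Z}$ --- has already been absorbed into Theorem \ref{thm:mat}, and the corollary is a formal reassembly of its conclusions around the definition of $\rbl{F}$.
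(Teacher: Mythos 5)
Your proposal is correct and is exactly the paper's own (implicit) argument: the corollary is stated immediately after Theorem \ref{thm:mat} and the definition $\rbl{F}=\ker{\bw}$ with no further proof, precisely because exactness at the two left terms is definitional and exactness at the two right terms is a restatement of the surjectivity and the isomorphism $\coker{\bw}\cong\ho{2}{\spl{2}{F}}{\Z}$ furnished by that theorem.
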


For future reference, we make the following observation:

\begin{lem}\label{lem:rblfin}
Let $F$ be a finite field with at least $4$ elements. Then the natural map $\rbl{F}\to \bl{F}$ induces an isomorphism
$\rbl{F}_{F^\times}\cong \bl{F}$. 
\end{lem}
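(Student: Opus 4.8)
The plan is to apply the right-exact coinvariants functor $(-)_{F^\times}=\ho{0}{F^\times}{-}$ together with its first derived functor $\ho{1}{F^\times}{-}$ to the defining short exact sequence of $\rbl{F}$, and to read off the assertion from the resulting homology long exact sequence. Throughout I use that every module in sight is an $\sgr{F}$-module, so the $F^\times$-action factors through $\sq{F}$ and hence $(-)_{F^\times}=(-)_{\sq{F}}$ on coinvariants; and I use the two structural facts already established, namely $(\rpb{F})_{F^\times}=\pb{F}$ and $(\rasym{2}{\Z}{F^\times})_{F^\times}=\asym{2}{\Z}{F^\times}$, under which $\bw$ descends to the classical map $\lambda$.

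First I would dispose of characteristic $2$: there $\sq{F}$ is trivial, so $\sgr{F}=\Z$, the $F^\times$-action is trivial everywhere, $\rasym{2}{\Z}{F^\times}=\asym{2}{\Z}{F^\times}=0$, and $\rbl{F}=\rpb{F}=\pb{F}=\bl{F}$, whence the claim is immediate. So assume $F$ has odd characteristic; then $\sq{F}\cong\Z/2$ and $F^\times$ is cyclic of even order $n=q-1$. Here I recall from the proof of Theorem \ref{thm:mat} that for a finite field $\rasym{2}{\Z}{F^\times}=\aug{F}^2$, and I would check directly that $\aug{F}^2$ is infinite cyclic, generated by $\pf{t}^2$ for $t$ a non-square, on which a generator of $F^\times$ (itself a non-square) acts by $-1$; that is, $\aug{F}^2$ is the sign representation.

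Next I would produce the clean short exact sequence
\[
0\to\rbl{F}\to\rpb{F}\xrightarrow{\ \bw\ }\aug{F}^2\to 0
\]
by showing that $\bw$ is surjective. This reduces to exhibiting a single $a\in F$ with $a$ and $1-a$ both non-squares: for such an $a$ one has $\bw(\gpb{a})=\rsf{1-a}{a}$, which equals $\pf{1-a}\pf{a}=\pf{t}^2$ in $\aug{F}^2$, a generator. A short character-sum computation shows that the number of such $a$ is $(q-\chi(-1))/4$, where $\chi$ is the quadratic character, and this is positive for every odd $q\ge 5$; for $q\ge 10$ this surjectivity is in any case contained in Theorem \ref{thm:mat}, since $\ho{2}{\spl{2}{F}}{\Z}=0$ there.

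Finally I would feed this sequence into the $F^\times$-homology long exact sequence, obtaining
\[
\ho{1}{F^\times}{\aug{F}^2}\xrightarrow{\ \partial\ }(\rbl{F})_{F^\times}\to\pb{F}\xrightarrow{\ \lambda\ }\asym{2}{\Z}{F^\times}\to 0.
\]
The crux, and the main obstacle, is the vanishing of the connecting map $\partial$, which I would obtain from $\ho{1}{F^\times}{\aug{F}^2}=0$. This is exactly where the identification of $\aug{F}^2$ with the sign representation pays off: since $F^\times$ is cyclic of order $n$, $\ho{1}{F^\times}{M}=M^{F^\times}/N M$ with $N$ the norm element, and for the sign representation $M^{F^\times}=0$ (a generator acts by $-1$ and $n$ is even), so $\ho{1}{F^\times}{\aug{F}^2}=0$. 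The displayed sequence then collapses to $0\to(\rbl{F})_{F^\times}\to\pb{F}\xrightarrow{\lambda}\asym{2}{\Z}{F^\times}\to 0$, identifying $(\rbl{F})_{F^\times}$ with $\ker\lambda=\bl{F}$; and since the inclusion-induced map here is precisely the descent of the natural map $\rbl{F}\to\bl{F}$, this is the desired isomorphism. (One could instead work with $\image\bw$, which is again a submodule of the sign representation and so still has vanishing $\ho{1}{F^\times}{-}$, but then the injectivity of its coinvariants into $\asym{2}{\Z}{F^\times}$ requires extra control of $\coker\bw$; proving surjectivity of $\bw$ is the cleaner route.)
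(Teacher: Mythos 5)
Your proof is correct and follows essentially the same route as the paper's: dispose of characteristic $2$, identify $\rasym{2}{\Z}{F^\times}$ with $\aug{F}^2\cong\Z$ carrying the sign action, establish surjectivity of $\bw$, and kill the connecting map in the coinvariants long exact sequence via the vanishing of $\ho{1}{-}{\aug{F}^2}$ for the sign representation of a cyclic group. The only cosmetic difference is that the paper deduces surjectivity of $\bw$ from the classical fact that $\gwaug{F}^2=0$ for finite fields, whereas you prove it directly by a character-sum count of elements $a$ with $a$ and $1-a$ both non-squares.
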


\begin{proof}
We can assume $F$ has odd characteristic, since otherwise $\sq{F}=\{ 1\}$ and $\rpb{F}=\pb{F}$, $\rbl{F}=\bl{F}$. 

Thus if $a$ is a generator of $F^\times$, $G:=\sq{F}$ is cyclic of order $2$ generated by the class $\an{a}$. Thus,
$\aug{F}=\Z\cdot\pf{a}$ is infinite cyclic, and $\aug{F}^n=2^{n-1}\aug{F}$ for all $n\geq 1$ (since $\pf{a}^2=-2\pf{a}$). 

Since $F^\times$ is cyclic,  $\asym{2}{\Z}{F^\times}=\sym{2}{\Z}{\sq{F}}$ and 
thus $\rasym{2}{\Z}{F^\times}=\aug{F}^2$. The fact 
that $\gwaug{F}^2=0$ thus amounts to the statement that the map $\bw:\rpb{F}\to \aug{F}^2$ is surjective.

Thus taking $G$-coinvariants of the short exact sequence
\[
0\to \rbl{F}\to\rpb{F}\to\aug{F}^2\to 0
\] 
gives the exact sequence
\[
\xymatrix{
\ho{1}{G}{\aug{F}^2}\ar[r]
&\rbl{F}_{F^\times}\ar[r]
&\pb{F}\ar[r]^-{\lambda}
&\sym{2}{\Z}{\sq{F}}\ar[r]
&0.
}
\]
However, $\aug{F}^2\cong \Z$ and the generator $\an{a}$ of the cyclic group 
$G$ acts as $-1$ (since $\an{a}\pf{a}=-\pf{a}$). 
Thus $\ho{1}{G}{\aug{F}^2}=0$ and $\rbl{F}_{F^\times}=\bl{F}$ as required.
\end{proof}

\begin{rem}
We will show below that for a finite field $F$ the action of $F^\times$ on $\rbl{F}$ is trivial. 
It will thus follow that 
$\rbl{F}=\bl{F}$ when $F$ is finite. 

In general the action of $F^\times$ on $\rbl{F}$ is nontrivial (for example, if $F$ is a local or global field). 
However, we will see 
below that for any field $F$ 
the natural map $\rbl{F}\to\bl{F}$ is always surjective 
and that the induced surjective map $\rbl{F}_{F^\times}\to\bl{F}$ has a kernel 
annihilated by a power of $2$. 
\end{rem}

\section{The   homology of $\mathrm{SL_2}$ of finite fields} \label{sec:finite}

In this section $p$ is a prime number, $q=p^f$ for some $f\geq 1$, and $\F{q}$ denotes the finite field with 
$q$ elements.
Recall that the group $\spl{2}{\F{q}}$ has order $q(q^2-1)=q(q-1)(q+1)$. We review - for want of an explicit 
reference - some of the main facts about the integral homomlogy of $\spl{2}{\F{q}}$. 

We recall the relevant results we will use (for details, see \cite{brown:coh}, III.9, III.10):  
Let $G$ be a finite group, $\ell$ a prime number and $H$ a Sylow $\ell$-subgroup of $G$. 
For any $g\in G$, conjugation by $g$ 
induces a homomorphism $\ho{k}{H}{\Z}\to \ho{k}{gHg^{-1}}{\Z}, z\mapsto g\cdot z$. 
For $g\in G$, we say that $z\in\ho{k}{H}{\Z}$ is \emph{$g$-invariant} if 
$\rs^H_{H\cap gHg^{-1}}z
=\rs^{gHg^{-1}}_{H\cap gHg^{-1}}g\cdot z$.

Let
\[
\hinv{G}{\ho{k}{H}{\Z}}:= \{ z\in \ho{k}{H}{\Z}\ |\  z \mbox{ is $g$-invariant for all }
g\in G\}.
\]
Then for $k\geq 1$, the corestriction homomorphism 
\[
\cores^G_H:\ho{k}{H}{\Z}\to \ho{k}{G}{\Z}
\]
induces an isomorphism
\[
\hinv{G}{\ho{k}{H}{\Z}}\cong\psyl{\ho{k}{G}{\Z}}{\ell}=\ho{k}{G}{\psyl{\Z}{\ell}}.
\]

Now, for $\ell\not= p$, the $\ell$-Sylow subgroups of $G=\spl{2}{\F{q}}$ are cyclic or generalised quaternion, and hence the (co)homology
is $\ell$-periodic. This means that there is a number $d=d(\ell)\geq 2$ such that  $\ho{k}{G}{\psyl{\Z}{\ell}}\cong \ho{k+d}{G}{
\psyl{\Z}{\ell}}$ for all $k\geq 1$ and that this happens if and only if $\ho{d-1}{G}{\psyl{\Z}{\ell}}\cong \psyl{\Z}{\ell}/|G|$. In 
this case, $d=d(\ell)$ is called the $\ell$-period of $G$.

We recall also the following useful results of Swan \cite{swan:period}: 
\begin{thm}\label{thm:swan12}[Swan \cite{swan:period}, Theorems 1 and 2]
\begin{enumerate}
\item Suppose that  $\ell$ is odd and the the $\ell$-Sylow subgroup of $G$ is cyclic. Let $H$ be a $\ell$-Sylow subgroup of 
$G$ and let $\Phi_\ell$ be the group of 
automorphisms of $H$ induced by inner automorphisms of $G$. Then the $\ell$-period of $G$ 
is $2\cdot|\Phi_\ell|$.
\item If the $2$-Sylow subgroup of $G$ is cyclic, the $2$-period is $2$. If the $2$-Sylow subgroup of $G$ is generalised quaternion then 
the $2$-period is $4$.
\end{enumerate}
\end{thm}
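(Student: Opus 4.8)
The plan is to recognize this as the content of Swan's period theorem and to reprove it by the stable-element machinery already recalled above. Since the $\ell$-localized (co)homology of $G=\spl{2}{\F{q}}$ is computed from that of a Sylow $\ell$-subgroup $H$ via
\[
\psyl{\ho{\ast}{G}{\Z}}{\ell}\cong \hinv{G}{\ho{\ast}{H}{\Z}},
\]
the $\ell$-period of $G$ is the least $d$ for which the periodicity generator of $\ho{d-1}{H}{\Z}$ survives into the stable subring $\hinv{G}{\ho{\ast}{H}{\Z}}$. First I would invoke the classification (already stated) that $H$ is cyclic or generalized quaternion, so that $\ho{\ast}{H}{\Z}$ is itself periodic, and then collapse the double-coset condition ``$g$-invariant for all $g$'' to invariance under the single subgroup $N_G(H)$. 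For cyclic $H$ this is legitimate by Burnside's theorem on fusion in abelian Sylow subgroups, which asserts that $N_G(H)$ controls fusion; after this reduction the stable subring is simply the $\Phi_\ell$-invariants $\ho{\ast}{H}{\Z}^{\Phi_\ell}$, where $\Phi_\ell$ is the image of $N_G(H)$ in $\aut{H}$.

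For part (1), with $\ell$ odd and $H\cong\Z/\ell^a$, the cohomology ring is $\coh{\ast}{H}{\Z}\cong\Z[\beta]/(\ell^a\beta)$ with $\deg\beta=2$, so $\coh{2k}{H}{\Z}=\Z/\ell^a\cdot\beta^k$. The group $\aut{H}\cong\unitb{\Z/\ell^a}$ is cyclic (this is exactly where oddness of $\ell$ is used), and an automorphism acting as multiplication by $u$ on $H$ acts on $\beta$ by $u$, hence on $\beta^k$ by $u^k$. Thus $\beta^k$ is $\Phi_\ell$-invariant if and only if $u^k=1$ for every $u\in\Phi_\ell$, i.e.\ if and only if $|\Phi_\ell|$ divides $k$; the least positive such $k$ is $|\Phi_\ell|$, so the distinguished class first appears in degree $2|\Phi_\ell|$ and the period is $2|\Phi_\ell|$. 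The homology statement follows by dualizing, giving the top class in degree $2|\Phi_\ell|-1=d-1$ as required.

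For part (2) I would treat the two cases separately. If $H\cong\Z/2^a$ is cyclic, the key observation is that $\aut{H}\cong\unitb{\Z/2^a}$ is a $2$-group; any nontrivial element of $\Phi_2$ realized by conjugation by a $2$-element of $N_G(H)$ would produce inside $G$ a nonabelian $2$-subgroup extending the cyclic $H$, contradicting cyclicity of the $2$-Sylow subgroup, so $\Phi_2$ is forced to be trivial and the period is $2$. If $H$ is generalized quaternion, then $\ho{\ast}{H}{\Z}$ is already periodic of period $4$, the top class living in $\coh{4}{H}{\Z}\cong\Z/|H|$; I would verify that this class is fixed by every automorphism induced from $G$ (it is, up to sign, a canonical characteristic class of the faithful two-dimensional symplectic representation of $H$), so that it is automatically stable and the period of $G$ is again $4$. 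The period cannot drop to $2$ because $\coh{2}{H}{\Z}$ is elementary abelian of exponent $2$ and so cannot contain a generator of the cyclic group $\Z/|H|$.

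The main obstacle I anticipate is the fusion-control input: justifying that a cyclic (for any $\ell$) or generalized quaternion Sylow subgroup has all its fusion induced from $N_G(H)$, so that the $g$-invariance of the definition collapses to $\Phi_\ell$-invariance. For the cyclic cases this is Burnside's abelian fusion theorem, but the quaternion case is not abelian and requires a separate, more hands-on analysis of the possible fusion in $Q_{2^n}$. Once control of fusion is in hand, the period computation is a direct reading of the $\Phi_\ell$-action on the (cyclically generated) top cohomology, and the residual work --- identifying the degree-four quaternion class as an automorphism-invariant characteristic class and ruling out a smaller period --- is standard. Since all of this is precisely Swan's argument in \cite{swan:period}, within the present paper it suffices to quote Theorems 1 and 2 there.
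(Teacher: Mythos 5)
The paper gives no proof of this statement: it is quoted verbatim from Swan (Theorems 1 and 2 of \cite{swan:period}), which is exactly the conclusion you reach at the end of your proposal, so there is nothing internal to compare against. Your accompanying sketch of Swan's stable-elements argument is essentially sound (the reduction to $\Phi_\ell$-invariance via Burnside for cyclic $H$, the cyclicity of $\unitb{\Z/\ell^a}$ for odd $\ell$, and the triviality of $\Phi_2$ for cyclic $2$-Sylow are all correct), and you rightly flag the one genuinely nontrivial point --- that fusion in a nonabelian generalized quaternion Sylow subgroup is not controlled by $N_G(H)$, so the invariance of the degree-four class must be checked against arbitrary $g$-conjugation as in Swan's own argument.
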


Furthermore, we recall the well-known calculation
\begin{lem}\label{lem:h1}
\[
\ho{1}{\spl{2}{\F{q}}}{\Z}= 
\left\{
\begin{array}{ll}
\Z/p,& q=2,3\\
0,&\mbox{otherwise}
\end{array}
\right.
\]
\end{lem}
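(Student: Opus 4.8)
The plan is to identify $\ho{1}{\spl{2}{\F{q}}}{\Z}$ with the abelianization $\abel{\spl{2}{\F{q}}}$ and to compute the latter from the standard transvection generators together with a handful of structural relations. Write $G=\spl{2}{\F{q}}$ and let $\bar{g}$ denote the image in $\abel{G}$ of $g\in G$. Recall that $G$ is generated by the elementary matrices $u^+(a):=\matr{1}{a}{0}{1}$ and $u^-(a):=\matr{1}{0}{a}{1}$, $a\in\F{q}$. Since $u^\pm(a)u^\pm(b)=u^\pm(a+b)$, the assignments $a\mapsto\overline{u^\pm(a)}$ are group homomorphisms from the additive group of $\F{q}$, so $\abel{G}$ is generated by $\overline{u^+(1)}$ and $\overline{u^-(1)}$, each of additive order dividing $p$ (as $\F{q}$ has exponent $p$).

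First I would treat the generic case $q\geq 4$. The conjugation identity $D(t)u^+(a)D(t)^{-1}=u^+(t^2a)$ yields the commutator relation $[D(t),u^+(a)]=u^+((t^2-1)a)$, so that $\overline{u^+((t^2-1)a)}=0$ in $\abel{G}$ for every $t\in\units{\F{q}}$ and every $a$. Since $\units{\F{q}}$ is cyclic of order $q-1\geq 3$, there is a $t$ with $t^2\neq 1$; then $a\mapsto(t^2-1)a$ is a bijection of $\F{q}$ and hence $\overline{u^+(b)}=0$ for all $b$. Conjugating by the Weyl element $w=\matr{0}{-1}{1}{0}$ gives $wu^+(a)w^{-1}=u^-(-a)$, so $\overline{u^-(b)}=0$ as well. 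As these elements generate $G$, we conclude $\abel{G}=0$ and $\ho{1}{G}{\Z}=0$.

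There remain the two exceptional cases $q=2,3$, where every element of $\units{\F{q}}$ squares to $1$ and the commutator trick collapses. Here I would first get an upper bound: the Weyl relation gives $\overline{u^+(1)}=\overline{u^-(-1)}=-\overline{u^-(1)}$, so $\abel{G}$ is cyclic, generated by $x:=\overline{u^-(1)}$, with $px=0$; thus $\abel{G}$ is a quotient of $\Z/p$. For the matching lower bound I would exhibit an explicit surjection onto $\Z/p$: for $q=2$, $G=\gl{2}{\F{2}}\cong S_3$ is nonabelian of order $6$, forcing $\abel{G}=\Z/2$; for $q=3$, the reduction modulo the centre $G\to\pspl{2}{\F{3}}\cong A_4$ followed by $A_4\to\abel{A_4}=\Z/3$ gives a surjection $G\to\Z/3$. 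Combining the two bounds gives $\abel{G}=\Z/p$ in each case.

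The main obstacle is precisely these exceptional cases: because $t^2=1$ for every $t\in\units{\F{q}}$, one cannot deduce perfectness, and the cyclic abelianization must be pinned down both from above and from below. The upper bound follows formally from additivity of the transvections and the Weyl relation, but the lower bound requires genuine input — the identification $G\cong S_3$ for $q=2$, and the exceptional isomorphism $\pspl{2}{\F{3}}\cong A_4$ for $q=3$ — to rule out that $\abel{G}$ degenerates further than $\Z/p$.
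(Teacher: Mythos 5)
Your argument is correct and complete: the commutator identity $[D(t),u^+(a)]=u^+((t^2-1)a)$ kills all transvections in the abelianization once $q\geq 4$, and the exceptional cases $q=2,3$ are pinned down by the upper bound from the Weyl relation together with the identifications $\spl{2}{\F{2}}\cong S_3$ and $\pspl{2}{\F{3}}\cong A_4$. There is nothing to compare it against: the paper states this lemma as a well-known fact and supplies no proof, and what you have written is precisely the standard argument one would expect behind that citation.
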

\begin{cor} \label{cor:2}
Suppose $p$ is odd. Then the $2$-period of $\spl{2}{\F{q}}$ is $4$ and for $k\geq 1$
\[
\ho{k}{\spl{2}{\F{q}}}{\psyl{\Z}{2}}=
\left\{
\begin{array}{ll}
\psyl{\Z}{2}/q(q^2-1),& k\equiv 3\pmod{4}\\
0,&\mbox{otherwise}
\end{array}
\right.
\] 
\end{cor}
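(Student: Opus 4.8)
The plan is to feed two ingredients already assembled in this section---Swan's theorem on the $2$-period (Theorem \ref{thm:swan12}) and the periodicity criterion recalled just above---into the standard dictionary between integral homology and Tate cohomology. Throughout set $G = \spl{2}{\F{q}}$ and $N = q(q^2-1) = \card{G}$, and localize all (co)homology at $2$; I write $\hat H^{\bullet}$ as shorthand for the Tate cohomology $\hat H^{\bullet}(G,\psyl{\Z}{2})$.

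\emph{Step 1 (the $2$-period is $4$).} Since $p$ is odd, an element $A \in G$ with $A^2 = I$, $A \neq I$, is diagonalisable with eigenvalues in $\{\pm 1\}$ whose product is $\det A = 1$; the mixed pair is excluded, so $A = -I$. Hence $-I$ is the unique involution of $G$, and every $2$-subgroup---in particular a $2$-Sylow subgroup $H$---has a unique element of order $2$. A finite $2$-group with this property is cyclic or generalised quaternion. It is not cyclic: as $2$ is the smallest prime dividing $\card{G}$, a cyclic $2$-Sylow subgroup would force a normal $2$-complement by Burnside's transfer theorem, so that $\abel{G}$ would have nontrivial $2$-part---contradicting Lemma \ref{lem:h1}, which gives $\abel{G} = \ho{1}{G}{\Z}$ equal to $0$, or to $\Z/3$ when $q = 3$, in either case of odd order. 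Therefore $H$ is generalised quaternion, and Theorem \ref{thm:swan12}(2) gives that the $2$-period of $G$ is $4$.

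\emph{Step 2 (the four residues).} With $d = 4$ the periodicity criterion recalled above already gives $\ho{3}{G}{\psyl{\Z}{2}} \cong \psyl{\Z}{2}/N$. For the other classes I would use the identification $\ho{n}{G}{\psyl{\Z}{2}} = \hat H^{-n-1}$, valid for $n \geq 1$, the period-$4$ isomorphism $\hat H^{m} \cong \hat H^{m+4}$, and the standard anchors $\hat H^{0} = \psyl{\Z}{2}/N$, $\hat H^{-1} = 0$ and $\hat H^{1} = \mathrm{Hom}(G,\psyl{\Z}{2}) = 0$. These give
\[
\ho{2}{G}{\psyl{\Z}{2}} = \hat H^{-3} \cong \hat H^{1} = 0, \qquad \ho{4}{G}{\psyl{\Z}{2}} = \hat H^{-5} \cong \hat H^{-1} = 0,
\]
while $\ho{1}{G}{\psyl{\Z}{2}} = 0$ is immediate from Lemma \ref{lem:h1}. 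As $1,2,3,4$ exhaust the residues modulo $4$ and homology is $4$-periodic in degrees $\geq 1$, the asserted values follow: $\psyl{\Z}{2}/N$ for $k \equiv 3 \pmod 4$ and $0$ otherwise.

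\emph{Main obstacle.} The only genuinely group-theoretic point, and the one I expect to require the most care, is the exclusion of a cyclic $2$-Sylow subgroup in Step 1; everything after Swan's period is formal. If one prefers to avoid Burnside's theorem, an equally good route is to produce a quaternion subgroup of $\spl{2}{\F{q}}$ directly---for instance from the $2$-power torsion of a non-split torus together with a Weyl representative normalising it---showing that $H$ is non-abelian and hence, having a single involution, generalised quaternion. I would also double-check that localization at $2$ commutes with the homology/Tate-cohomology dictionary and the period-$4$ shift, so that the term in degree $k \equiv 3$ records exactly the $2$-primary part of $\psyl{\Z}{2}/N$.
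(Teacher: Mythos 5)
Your proof is correct, but it reaches the even-degree vanishing by a different route than the paper. The paper's proof simply asserts (as classical) that for odd $p$ the $2$-Sylow subgroups $H$ of $\spl{2}{\F{q}}$ are generalised quaternion, invokes Theorem \ref{thm:swan12}(2) for the period, reads off the case $k\equiv 3\pmod 4$ from the periodicity criterion, and then disposes of all even $k$ at once by noting that the even-dimensional integral homology of a generalised quaternion group vanishes, so that $\psyl{\ho{k}{\spl{2}{\F{q}}}{\Z}}{2}\cong\hinv{G}{\ho{k}{H}{\Z}}\subseteq\ho{k}{H}{\Z}=0$; the case $k\equiv 1\pmod4$ comes from Lemma \ref{lem:h1} plus periodicity, exactly as you do. You instead avoid any knowledge of the homology of quaternion groups and argue formally in Tate cohomology, anchoring the four residue classes at $\hat H^{-1}=0$, $\hat H^{0}=\psyl{\Z}{2}/N$, $\hat H^{1}=0$ and Lemma \ref{lem:h1}; this is a clean, purely formal alternative once the period is known (and your anchors are all correct, including the compatibility of the period-$4$ shift with negative degrees, since the periodicity isomorphism is cup product with an invertible class and so holds in all Tate degrees). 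You also supply a proof that $H$ is generalised quaternion --- unique involution $-I$ plus exclusion of the cyclic case via Burnside's normal $2$-complement theorem and the odd order of $\ho{1}{\spl{2}{\F{q}}}{\Z}$ --- where the paper takes this as a known fact; that argument is sound (including the edge case $q=3$, where $\ho{1}{\spl{2}{\F{3}}}{\Z}=\Z/3$ still has odd order). In short: same skeleton (quaternion Sylow, Swan, periodicity), but your treatment of the residues $k\equiv 0,2\pmod 4$ via Tate duality genuinely replaces the paper's appeal to the vanishing of even-dimensional quaternion homology through the corestriction/invariants isomorphism.
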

\begin{proof}
When $p\not= 2$, the $2$-Sylow subgroups of $\spl{2}{\F{q}}$ are generalized quaternion groups. 
So Swan's theorem tells us that the  $2$-period is $4$, and hence that $\ho{k}{\spl{2}{\F{q}}}{\psyl{\Z}{2}}=\psyl{\Z}{2}/q(q^2-1)$ 
whenever $k\equiv 3\pmod{3}$. On the other hand, the even-dimensional integral homology of the  (generalized) quaternion groups are zero.

Finally, by Lemma \ref{lem:h1}, $\ho{k}{\spl{2}{\F{q}}}{\psyl{\Z}{2}}=0$ for $k\equiv 1\pmod{4}$.  
\end{proof}

Now we consider the $\ell$-Sylow subgroups for odd $\ell$ dividing $q-1$.
We let $T$ denote the diagonal subgroup $\{ D(x)\ |\ x\in F^\times\}$ of $\spl{2}{F}$. 
$T$ is cyclic of order $q-1$. Since the order of $\spl{2}{F}$ is $q(q^2-1)$, 
it follows that for any \emph{odd} prime $\ell$ dividing $q-1$, $\psyl{T}{\ell}$ is 
a Sylow $\ell$-subgroup of $\spl{2}{F}$.

\begin{lem}\label{lem:q-1} Let $\ell$ be an odd prime dividing $q-1$. The $\ell$-period of $\spl{2}{\F{q}}$ is $4$ and for $k\geq 1$
\[
\ho{k}{\spl{2}{\F{q}}}{\psyl{\Z}{\ell}}=
\left\{
\begin{array}{ll}
\psyl{\Z}{\ell}/q(q^2-1),& k\equiv 3\pmod{4}\\
0,&\mbox{otherwise}
\end{array}
\right.
\]
\end{lem}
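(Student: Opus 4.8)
The plan is to run the argument of Corollary \ref{cor:2} in the cyclic-Sylow setting, where Swan's Theorem \ref{thm:swan12}(1) applies directly. Set $G := \spl{2}{\F{q}}$ and $H := \psyl{T}{\ell}$, which the preceding paragraph identifies as an $\ell$-Sylow subgroup of $G$; as $T$ is cyclic, $H$ is cyclic, say of order $\ell^m$ with $m = \val{\ell}(q-1)$. First I would pin down the fusion of $H$ by computing its centraliser and normaliser. A generator of $H$ has the form $D(\z)$ with $\z$ of odd order $\ell^m \geq 3$, so $\z \neq \pm 1$ and $D(\z)$ is regular semisimple; hence $C_G(D(\z)) = T$, and therefore $C_G(H) = T$. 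Any $g$ normalising $H$ then satisfies $g C_G(H) g^{-1} = C_G(H)$, so $g \in N_G(T)$; conversely $N_G(T)$ normalises the unique (hence characteristic) subgroup $H$ of the cyclic group $T$. Thus $N_G(H) = N_G(T)$, and since $N_G(T)/T$ has order $2$ with its nontrivial coset acting on $T$ by $D(x)\mapsto D(x^{-1})$, the group $\Phi_\ell = N_G(H)/C_G(H)$ has order $2$ and acts on $H$ by inversion. Swan's Theorem \ref{thm:swan12}(1) now gives that the $\ell$-period of $G$ is $2\card{\Phi_\ell} = 4$.

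To extract the homology in each degree I would use the corestriction isomorphism recalled above, together with the fact that, $H$ being abelian, fusion in $G$ is controlled by $N_G(H)$ (Burnside's fusion theorem); consequently the stable-element group $\hinv{G}{\ho{k}{H}{\Z}}$ is exactly the subgroup of $\Phi_\ell$-invariants in $\ho{k}{H}{\Z}$. I would then feed in the standard integral homology of a cyclic group: $\ho{k}{H}{\Z} = 0$ for even $k \geq 2$, and $\ho{2i-1}{H}{\Z} \cong \Z/\ell^m$ for $i \geq 1$. The even-degree vanishing immediately gives $\ho{k}{G}{\psyl{\Z}{\ell}} = 0$ for all even $k \geq 2$.

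The crux, and the step I expect to require the most care, is the action of the inversion automorphism $\iota$ on the odd-degree homology. Using the periodic resolution of $H$ (equivalently, that $\iota$ negates the generator of $\coh{2}{H}{\Z}$, together with the cup-product structure and universal coefficients), $\iota$ acts on $\ho{2i-1}{H}{\Z} \cong \Z/\ell^m$ by $(-1)^i$. Matching residues, $2i-1 \equiv 1 \pmod 4$ forces $i$ odd, where $\iota = -1$, while $2i-1 \equiv 3 \pmod 4$ forces $i$ even, where $\iota = +1$. Since $\ell$ is odd, $2$ is invertible on $\Z/\ell^m$, so the subgroup of $\Phi_\ell$-invariants is trivial when $k \equiv 1 \pmod 4$ and is all of $\Z/\ell^m$ when $k \equiv 3 \pmod 4$ (the vanishing at $k=1$ being consistent with Lemma \ref{lem:h1}, since $\ell \mid q-1$ already forces $q \geq 4$). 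Finally, because $\ell \neq p$ and $q \equiv 1 \pmod \ell$ give $\val{\ell}(q(q^2-1)) = \val{\ell}(q-1) = m$, we have $\Z/\ell^m \cong \psyl{\Z}{\ell}/q(q^2-1)$, which is the asserted value in degrees $k \equiv 3 \pmod 4$; periodicity then propagates all four residue classes to every $k \geq 1$.
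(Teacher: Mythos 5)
Your proof is correct and follows essentially the same route as the paper: identify $\psyl{T}{\ell}$ as a cyclic Sylow subgroup, show that its normaliser is $N_G(T)=\langle T,w\rangle$ so that $\Phi_\ell$ is generated by inversion, and apply Swan's theorem to get period $4$. The only cosmetic differences are that the paper finds the normaliser by a direct matrix computation and handles the degrees $k\equiv 1\pmod 4$ via Lemma \ref{lem:h1} plus periodicity, whereas you compute the normaliser through the centraliser and evaluate the stable elements in every odd degree directly from the $(-1)^i$-action on $\ho{2i-1}{H}{\Z}$; both are sound.
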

\begin{proof}
Fix $\ell$ odd dividing $q-1$, and let $D(a)\in \psyl{T}{\ell}$. So $a^2\not=1$. 

Now  suppose that 
\[
A=\matr{x}{y}{z}{w}
\]
normalises $\psyl{T}{\ell}$.

Then since
\[
\matr{x}{y}{z}{w}\matr{a}{0}{0}{a^{-1}}\matr{w}{-y}{-z}{x}=
\matr{axw-a^{-1}yz}{(a^{-1}-a)xy}{(a-a^{-1})zw}{a^{-1}xw-ayz}\in T
\]
it follows that $xy=zw=0$. Thus $A$ belongs to the group of order $2(q-1)$ 
generated by $T$ and 
\[
w=\matr{0}{1}{-1}{0}.
\]
Conjugation by $w$ acts as $-1$ (i.e inversion) on $\psyl{T}{\ell}$. Hence $\Phi_{\ell}=\an{-1}$ (in the notation of 
Theorem \ref{thm:swan12} above). So the $\ell$-period is $2|\Phi_\ell|=4$. The rest follows as in the case $\ell=2$.
\end{proof}

Next, we deal with the case $\ell | q+1$ ($\ell$ odd).  Let $E/\F{q}$ be a quadratic extension of fields.(So $E=\F{q^2}$.) 
Then there is an embedding of groups $E^\times \to \Aut{\F{q}}{E}, 
a\mapsto \mu_a$, and $\Aut{\F{q}}{E}\cong\gl{2}{\F{q}}$ on choosing  an $\F{q}$-basis of $E$. 
The composite 
\[
\xymatrix{E^\times\ar[r]& \Aut{\F{q}}{E}\ar[r]^-{\det}&\F{q}^\times}
\]
is just the norm map $N_{E/\F{q}}$. Thus, if we let  $K=\ker{N_{E/\F{q}}:E^\times\to F^\times}$, 
we obtain an embedding $K\to \spl{2}{\F{q}}$ on choosing an $\F{q}$-basis of $E$. Since the 
norm map is surjective, it follows that $K$ is cyclic of order $q+1$. Thus for any odd 
$\ell$ dividing $q+1$, $\psyl{K}{\ell}$ is a Sylow $\ell$-subgroup of $\spl{2}{\F{q}}$. 

\begin{lem}\label{lem:q+1} 
Let $\ell$ be an odd prime dividing $q+1$. The $\ell$-period of $\spl{2}{\F{q}}$ is $4$ and for $k\geq 1$
\[
\ho{k}{\spl{2}{\F{q}}}{\psyl{\Z}{\ell}}=
\left\{
\begin{array}{ll}
\psyl{\Z}{\ell}/q(q^2-1),& k\equiv 3\pmod{4}\\
0,&\mbox{otherwise}
\end{array}
\right.
\]
\end{lem}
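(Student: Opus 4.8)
The plan is to follow the same strategy as in Lemma \ref{lem:q-1}, but with the split torus $T$ replaced by the nonsplit torus $K$: show that the group $\Phi_\ell$ of Theorem \ref{thm:swan12} has order $2$, so that the $\ell$-period is $2|\Phi_\ell|=4$, and then read off the homology exactly as in the case $\ell=2$ of Corollary \ref{cor:2}. Since $K$ is cyclic, its $\ell$-Sylow subgroup $H:=\psyl{K}{\ell}$ is cyclic, so part (1) of Swan's theorem applies and everything reduces to computing $\Phi_\ell=N_G(H)/C_G(H)$, where $G=\spl{2}{\F{q}}$.

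The heart of the matter is this normalizer quotient, which — unlike the split case — cannot be handled by a bare matrix computation; instead I would argue through characteristic polynomials. Fix a generator $\kappa$ of $H$, of order $\ell^m$ with $\ell^m\| (q+1)$. As $\ell$ is odd, $\kappa\neq\pm 1$, and $\kappa\notin\F{q}$ (any element of $K\cap\F{q}^\times$ has norm $\kappa^2$, forcing $\kappa=\pm 1$); hence $\kappa$ generates $E$ over $\F{q}$ and, viewed in $\gl{2}{\F{q}}=\Aut{\F{q}}{E}$, has irreducible characteristic polynomial $(X-\kappa)(X-\kappa^q)$ with $\kappa^q=\kappa^{-1}$ (since $\norm{E}{\F{q}}(\kappa)=\kappa\cdot\kappa^q=1$). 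Thus $\kappa$ is regular semisimple, its centralizer in $\gl{2}{\F{q}}$ is the full nonsplit torus $\mu_{E^\times}$, and since $\kappa\in H\subseteq K$ with $K$ abelian, $C_G(H)=\mu_{E^\times}\cap\spl{2}{\F{q}}=K$.

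For the normalizer, any $g\in N_G(H)$ carries $\kappa$ to a generator $\kappa^j$ of $H$ which, being conjugate to $\kappa$, has the same eigenvalue set $\{\kappa,\kappa^{-1}\}$; hence $j\equiv\pm 1\pmod{\ell^m}$ and $\Phi_\ell\subseteq\{\pm 1\}$. To realize inversion inside $\spl{2}{\F{q}}$, take the Frobenius $\sigma\colon x\mapsto x^q$ of $E$, an $\F{q}$-linear automorphism in $\gl{2}{\F{q}}$ with $\det\sigma=-1$ satisfying $\sigma\mu_a\sigma^{-1}=\mu_{a^q}$, which is inversion on $K$. Choosing $b\in E^\times$ with $\norm{E}{\F{q}}(b)=-1$ (possible since the norm is surjective) and setting $\tau:=\mu_b\sigma$ gives $\det\tau=1$ and $\tau\mu_a\tau^{-1}=\mu_{a^q}$, so $\tau\in N_G(H)$ acts on $H$ by $-1$. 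Therefore $\Phi_\ell=\{\pm 1\}$, $|\Phi_\ell|=2$, and the $\ell$-period is $4$.

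Finally the homology follows exactly as in Corollary \ref{cor:2}: with $H$ cyclic and period $4$, the group $\ho{k}{\spl{2}{\F{q}}}{\psyl{\Z}{\ell}}$ is the subgroup of $\Phi_\ell$-invariants in the $4$-periodic homology of $H$; inversion acts trivially on $\ho{4i-1}{H}{\Z}$ but by $-1$ (hence with no nonzero invariants, $\ell$ being odd) on $\ho{4i+1}{H}{\Z}$, while the even homology vanishes and $\ho{1}{G}{\Z}=0$ by Lemma \ref{lem:h1}. This yields $\psyl{\Z}{\ell}/q(q^2-1)$ for $k\equiv 3\pmod 4$ and $0$ otherwise, the value being the $\ell$-part of $|G|$ because $\ell\mid q+1$ forces $\ell\nmid q(q-1)$. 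I expect the main obstacle to be the normalizer computation of the third paragraph, and in particular the determinant bookkeeping required to place the inverting element in $\spl{2}{\F{q}}$ rather than merely in $\gl{2}{\F{q}}$.
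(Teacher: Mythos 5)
Your proof is correct and arrives at the same structural conclusion as the paper --- $\Phi_\ell=\{\pm1\}$, hence $\ell$-period $4$ by Swan's theorem, with the homology then read off as in the earlier cases --- but the key step, the computation of the normalizer of the Sylow subgroup $\mu(\psyl{K}{\ell})$, is carried out quite differently. The paper fixes an explicit $\F{q}$-basis of $E$, writes the conjugation condition $A\mu(\lambda)A^{-1}\in\mu(K)$ as a system of quadratic equations in the entries of $A$, and solves it, with separate computations for odd characteristic and characteristic $2$; this shows directly that the normalizer is contained in $\mu(E^\times)\cdot\an{\tilde{\sigma}}$. You instead observe that a generator $\kappa$ of $\psyl{K}{\ell}$ is regular semisimple with eigenvalue set $\{\kappa,\kappa^{-1}\}$ and that conjugation preserves characteristic polynomials, so the induced automorphism of $H$ can only be $\pm1$; you then realize $-1$ inside $\spl{2}{\F{q}}$ by twisting the Frobenius by $\mu_b$ with $\norm{E}{\F{q}}(b)=-1$. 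Your route is basis-free, uniform in the characteristic, and avoids solving any equations, at the cost of needing the (easy) facts that $\kappa\notin\F{q}$ and $\det\sigma=-1$; the paper's computation is more elementary and, as a by-product, identifies the full normalizer rather than just its image in $\aut{H}$, which is slightly more information than the lemma needs. The final passage from $|\Phi_\ell|=2$ to the displayed homology is essentially identical in both: your explicit invariants computation on $\ho{4i\pm1}{H}{\Z}$ simply unwinds the paper's appeal to $4$-periodicity together with Lemma \ref{lem:h1}.
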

\begin{proof} Let $\mu:E^\times\to \gl{2}{\F{q}}$ be the embedding described above. 
If  $\sigma\in\gal{E}{\F{q}}$ then $\sigma\in\Aut{\F{q}}{E}$ and thus, given the choice of basis, 
is represented by an element $\tilde{\sigma}\in\gl{2}{\F{q}}$. 
Then $\Gamma:=\mu(E^\times)\cdot\< \tilde{\sigma}\>\subset \gl{2}{\F{q}}$ is a semidirect 
product in which the $\tilde{\sigma}$  by conjugation  on $\mu(E^\times)$ corresponds to 
the Galois action of $\sigma$ on $E^\times$.  

Fix an odd prime $\ell$ dividing $q+1$. So $\mu(\psyl{K}{\ell})$ is an $\ell$-Sylow subgroup of $\\spl{2}{\F{q}}$.
We will show that the normalizer $\mu(\psyl{K}{\ell})$ in $\spl{2}{\F{q}}$ is  $\Gamma\cap\spl{2}{F}$. 

Since the elements of $\mu(E^\times)$ act trivially by conjugation on $\mu(K)$ and since 
$\tilde{\sigma}$ acts by inversion on $\mu(K)$ the result follows from this as in the case $\ell|q-1$.

We must distinguish two cases:
\begin{enumerate}
\item[Case (i):] $p\not= 2$.

Let $E=\F{q}(\theta)$ where $\theta^2=\alpha\in \F{q}$ (and thus $\alpha$ is not 
square in $\F{q}$). Take the basis $\{ 1,\theta\}$ of $E$. Then the associated embedding 
$\mu: K\to \spl{2}{\F{q}}$ is given by
\[
a+b\theta\mapsto \matr{a}{b\alpha}{b}{a}. 
\]

  Suppose that 
\[
A=\matr{x}{y}{z}{w}\in \spl{2}{\F{q}}
\]
normalizes $\mu(\psyl{K}{\ell})$. 

Let $\lambda=a+b\theta\in \psyl{K}{\ell}$. Since $\ell$ does not divide $q-1$,
$\lambda\not\in \F{q}$ (so that $b\not=0$). Then from
\[
A\mu(\lambda)A^{-1}=
\matr{a+b(yw-xz\alpha)}{b(x^2\alpha-y^2)}{b(w^2-z^2\alpha)}{a-b(yw-xz\alpha)}
\in\mu(K)
\]
we obtain the conditions
\begin{eqnarray*}
yw-xz\alpha&=&0\\
x^2\alpha-y^2&=&(w^2-z^2\alpha)\alpha\\
\end{eqnarray*}
(since $2b\not= 0$). 

Now we fix $x$ and $z$. Eliminating $w$ from these equations gives the quartic
\[
(y^2-x^2\alpha)(y^2-z^2\alpha^2)=0
\]
in $y$. Since $\alpha$ is not square in $F$, this leads to the two solutions 
$(y,w)=(z\alpha,x)$ and $(-z\alpha,-x)$. The first of these gives 
\[
A=\matr{x}{z\alpha}{z}{x}\in \mu(E^\times)
\] 
and the second gives 
\[
A=\matr{x}{-z\alpha}{z}{-x}=\matr{x}{z\alpha}{z}{x}\matr{1}{0}{0}{-1}=
\matr{x}{z\alpha}{z}{x}\tilde{\sigma}\in \Gamma.
\]

So the normalizer of $\mu(\psyl{K}{\ell})$ is contained in $\Gamma$ as required.

\item[Case (ii):] $p=2$

We write $E=\F{q}(\theta)$ where $\theta$ satisfies $\theta^2+\theta=\alpha$ and 
$\sigma(\theta)=1+\theta$. Again we choose the basis $\{ 1,\theta\}$ of $E$. 
The embedding $\mu:E^\times\to \gl{2}{\F{q}}$ then has the form
\[
a+b\theta\mapsto \matr{a}{b\alpha}{b}{a+b}
\]
and we have 
\[
\tilde{\sigma}=\matr{1}{1}{0}{1}.
\]

Suppose again that 
\[
A=\matr{x}{y}{z}{w}\in \spl{2}{F}
\] 
normalizes $\mu(\psyl{K}{\ell})$ 
and that $\lambda=a+b\theta\in \psyl{K}{\ell}$ (so that $b\not= 0$ as above).

Then from 
\[
A\mu(\lambda)A^{-1}=
\matr{a(xw+yz)+b(xz\alpha+yz+yw)}{b(x^2\alpha+y^2+xy)}{b(w^2+z^2\alpha+zw)}
{a(xw+zy)+b(xz\alpha+xw+yw)}\in \mu(E^\times)
\]
we get the conditions:
\begin{eqnarray*}
xw+yz&=&z^2\alpha+w^2+zw\\
x^2\alpha+xy+y^2&=&(z^2\alpha+w^2+zw)\alpha.
\end{eqnarray*}
If we fix $x$ and $z$, then the four solutions of this pair of binary quadratic equations 
is $(y,w)=(z\alpha, 0), (z\alpha, x+z), (x+z\alpha, x)$ and  $(x+z\alpha,z)$.

The first and last of these give singular matrices.  The second gives 
\[
A=\matr{x}{z\alpha}{z}{x+z}=\mu(x+z\theta)\in \mu(E^\times).
\]

The third solution gives 
\[
A=\matr{x}{x+z\alpha}{z}{x}=\matr{x}{z\alpha}{z}{x+z}\cdot\tilde{\sigma}\in \Gamma.
\]

Thus, again, the normalizer of $\mu(\psyl{K}{\ell})$ is contained in $\Gamma$ and the lemma is proven.
\end{enumerate}
\end{proof}

Pulling together the statements of Corollary \ref{cor:2} and Lemmas \ref{lem:q-1} and \ref{lem:q+1} we have 
\begin{cor}\label{cor:1/p}
For all $k\geq 1$ 
\[
\ho{k}{\spl{2}{\F{q}}}{\Z[1/p]}=
\left\{
\begin{array}{ll}
\Z/(q^2-1),& k\equiv 3\pmod{4}\\
0,&\mbox{otherwise}
\end{array}
\right.
\]

\end{cor}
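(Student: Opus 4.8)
The plan is to assemble the desired computation of $\ho{k}{\spl{2}{\F{q}}}{\Z[1/p]}$ from the prime-by-prime information already gathered in Corollary \ref{cor:2} and Lemmas \ref{lem:q-1} and \ref{lem:q+1}. The key observation is that $\Z[1/p]$-homology is, after localization at each individual prime, detected by the $\ell$-primary parts for the primes $\ell \neq p$. Concretely, since $\spl{2}{\F{q}}$ is a finite group, each $\ho{k}{\spl{2}{\F{q}}}{\Z}$ is a finite abelian group, and inverting $p$ simply discards the $p$-primary part while leaving the $\ell$-primary parts $\psyl{\ho{k}{\spl{2}{\F{q}}}{\Z}}{\ell}$ untouched for all $\ell \neq p$. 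Thus there is a natural decomposition
\[
\ho{k}{\spl{2}{\F{q}}}{\Z[1/p]} \cong \bigoplus_{\ell \neq p}\psyl{\ho{k}{\spl{2}{\F{q}}}{\Z}}{\ell},
\]
and it suffices to identify each $\ell$-primary summand.

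The primes $\ell \neq p$ dividing the order $q(q^2-1) = q(q-1)(q+1)$ fall into exactly the three classes already treated: $\ell = 2$ (with $p$ odd, handled in Corollary \ref{cor:2}), odd $\ell \mid q-1$ (Lemma \ref{lem:q-1}), and odd $\ell \mid q+1$ (Lemma \ref{lem:q+1}). First I would note that these classes are mutually exclusive, since $\gcd(q-1,q+1)$ divides $2$, so no odd prime divides both $q-1$ and $q+1$; and an odd prime $\ell \neq p$ dividing $q(q^2-1)$ must divide exactly one of $q-1$ or $q+1$. In each of the three cases, the cited result states that the $\ell$-primary homology vanishes unless $k \equiv 3 \pmod 4$, in which case it equals $\psyl{\Z}{\ell}/q(q^2-1)$, i.e. the $\ell$-primary part of $\Z/q(q^2-1)$. (In the case $p=2$ there is no prime $\ell=2$ contribution, but the odd $\ell \mid q\pm 1$ cases apply verbatim, and $2 \nmid (q^2-1)$ when $q$ is even only in the sense that the $2$-part is absorbed by inverting $p=2$; one checks $q^2-1 = (q-1)(q+1)$ is odd precisely when $q$ is a power of $2$, so the $2$-summand is correctly absent.)

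Assembling the summands then gives, for $k \equiv 3 \pmod 4$,
\[
\ho{k}{\spl{2}{\F{q}}}{\Z[1/p]} \cong \bigoplus_{\ell \neq p}\psyl{\Z/q(q^2-1)}{\ell} \cong \Z/(q^2-1),
\]
where the last isomorphism holds because inverting $p$ kills exactly the $p$-part of $q(q^2-1)$, and the $p$-part is precisely the factor $q$ (as $p \nmid q^2-1$); for $k \not\equiv 3 \pmod 4$ every summand vanishes. The main obstacle, such as it is, is purely bookkeeping: one must verify that the three lemmas exhaust all relevant primes and that their $k \equiv 3 \pmod 4$ answers are compatible (they all read $\psyl{\Z}{\ell}/q(q^2-1)$), and one must be careful with the degenerate low-order fields where Lemma \ref{lem:h1} shows $\ho{1}{\spl{2}{\F{q}}}{\Z}$ can be nonzero for $q=2,3$ — but since that contribution is $\Z/p$ it is annihilated by inverting $p$ and does not disturb the stated formula. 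I do not expect any genuine difficulty beyond confirming this exhaustiveness and the arithmetic $q(q^2-1)[1/p] = (q^2-1)$.
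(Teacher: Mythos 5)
Your proposal is correct and is exactly the paper's argument: the paper derives this corollary by ``pulling together'' Corollary \ref{cor:2} and Lemmas \ref{lem:q-1} and \ref{lem:q+1}, i.e.\ the same prime-by-prime assembly of the $\ell$-primary parts for $\ell\neq p$, with the same observation that the three cases are exhaustive and that the $p$-part of $q(q^2-1)$ is precisely $q$. Your extra care about the $p=2$ case and the low-order fields $q=2,3$ is consistent with (and slightly more explicit than) what the paper leaves implicit.
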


Furthermore, we can deduce the following:
\begin{cor} \label{cor:subgp}
Let $H$ be any subgroup of $\spl{2}{\F{q}}$ of order not divisible by $p$. If $k\equiv 3\pmod{4}$ then 
$\ho{k}{H}{\Z}=\Z/|H|$ and the 
corestriction map $\ho{k}{H}{\Z}\to\ho{k}{\spl{2}{\F{q}}}{\Z}$ is injective. 
\end{cor}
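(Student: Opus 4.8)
Write $G=\spl{2}{\F{q}}$, so $|G|=q(q^2-1)$. Since $\gcd(|H|,p)=1$, the order $|H|$ is coprime to $q$ and therefore divides $q^2-1$; moreover $\ho{k}{H}{\Z}$ is prime-to-$p$ torsion for every $k\geq 1$. The plan is to prove the two assertions independently: (a) $\ho{k}{H}{\Z}\cong\Z/|H|$ for $k\equiv 3\pmod 4$, which I would deduce by showing $H$ has periodic cohomology of period dividing $4$; and (b) injectivity of the corestriction, which I would get cheaply from the transfer identity together with Corollary \ref{cor:1/p}, \emph{without} any further fusion analysis.

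\textbf{Assertion (a).} If the period of $H$ divides $4$, then for $k\equiv 3\pmod 4$ we have $\ho{k}{H}{\Z}\cong\widehat{H}^{-k-1}(H,\Z)\cong\widehat{H}^{0}(H,\Z)=\Z/|H|$, since $-k-1\equiv 0\pmod 4$. So it suffices to bound the $\ell$-period for each prime $\ell\mid|H|$ (note $\ell\neq p$). A Sylow $\ell$-subgroup $P$ of $H$ lies in some Sylow $\ell$-subgroup $S$ of $G$. By Lemmas \ref{lem:q-1} and \ref{lem:q+1}, for odd $\ell$ the group $S$ is cyclic, so $P$ is cyclic; for $\ell=2$ (which forces $p$ odd) the group $S$ is generalized quaternion, as in Corollary \ref{cor:2}, so $P$ is cyclic or generalized quaternion, and Swan's Theorem \ref{thm:swan12}(2) gives $2$-period $2$ or $4$. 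For odd $\ell$ I would control the fusion by an eigenvalue argument: a generator $x$ of the cyclic group $P$, of order $\ell^a$, is semisimple with $\det x=1$, hence has eigenvalues $\{\zeta,\zeta^{-1}\}$ over $\overline{\F{q}}$ for a primitive $\ell^a$-th root of unity $\zeta$. If $h\in H$ normalizes $P$ then $hxh^{-1}=x^{j}\in P$ is conjugate to $x$, so $\{\zeta^{j},\zeta^{-j}\}=\{\zeta,\zeta^{-1}\}$, forcing $j\equiv\pm1\pmod{\ell^a}$. Thus the group $\Phi_\ell$ of automorphisms of $P$ induced by $H$ satisfies $\Phi_\ell\subseteq\{\pm1\}$, and Swan's Theorem \ref{thm:swan12}(1) gives $\ell$-period $2|\Phi_\ell|\in\{2,4\}$. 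Every $\ell$-period divides $4$, hence so does the period of $H$.

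\textbf{Assertion (b).} Here I would use the identity $\cores^G_H\circ\rs^G_H=[G:H]$ on $\ho{k}{G}{\Z}$. As $\ho{k}{H}{\Z}$ is prime-to-$p$ torsion, the corestriction is injective iff its composite with $\ho{k}{G}{\Z}\to\ho{k}{G}{\Z[1/p]}$ is, so I may work in $\Z[1/p]$-coefficients, where $\ho{k}{G}{\Z[1/p]}=\Z/(q^2-1)$ for $k\equiv 3\pmod 4$ by Corollary \ref{cor:1/p}. The index is $[G:H]=q\,(q^2-1)/|H|$, and since $\gcd(q,q^2-1)=1$ and $|H|\mid q^2-1$, multiplication by $[G:H]$ on $\Z/(q^2-1)$ has $\gcd([G:H],q^2-1)=(q^2-1)/|H|$, hence image of order exactly $|H|$. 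This image is contained in $\image{\cores^G_H}$, so $\image{\cores^G_H}$ has order at least $|H|$; as the source $\ho{k}{H}{\Z}\cong\Z/|H|$ has order $|H|$ by (a), the corestriction is injective.

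\textbf{Main obstacle.} The real content is (a), and specifically pinning down that the period divides $4$. The case $\ell=2$ is handed to us by Swan's theorem, but for odd $\ell$ one must bound $\Phi_\ell$, and the cleanest route is the eigenvalue argument above, which genuinely uses both $\det=1$ and semisimplicity. Once the period bound is established, the computation of $\ho{k}{H}{\Z}$ and the injectivity of the corestriction are both short, the latter being purely a divisibility count driven by the factor $q$ in $[G:H]$.
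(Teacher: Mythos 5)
Your proof is correct, but it takes a genuinely different route from the paper's at both steps, most visibly for the injectivity. The paper reduces immediately to the case where $H$ is an $\ell$-group for a single prime $\ell\neq p$ (so that $H$ is cyclic or generalised quaternion and $\ho{k}{H}{\Z}=\Z/|H|$ is classical), and then factors the corestriction as $\ho{k}{H}{\Z}\to\ho{k}{L}{\Z}\to\ho{k}{\spl{2}{\F{q}}}{\Z}$ through an ambient Sylow $\ell$-subgroup $L$ of $\spl{2}{\F{q}}$, asserting injectivity of the first map as a ``straightforward calculation'' inside cyclic and generalised quaternion groups and quoting Corollary \ref{cor:2} and Lemmas \ref{lem:q-1} and \ref{lem:q+1} for the second. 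You instead (a) compute $\ho{k}{H}{\Z}$ by applying Swan's theorem to $H$ itself, which obliges you to bound the fusion group $\Phi_\ell$; your eigenvalue argument (semisimplicity and $\det=1$ give eigenvalues $\{\zeta,\zeta^{-1}\}$ with $\zeta$ of exact order $\ell^a>2$, so conjugation forces $j\equiv\pm1\pmod{\ell^a}$) does this correctly, though the paper's reduction to the Sylow subgroups of $H$ reaches the same computation with less machinery; and (b) obtain injectivity purely from the transfer identity $\cores^G_H\circ\rs^G_H=[G:H]$ together with the exact order $q^2-1$ of $\ho{k}{\spl{2}{\F{q}}}{\Z[1/p]}$ from Corollary \ref{cor:1/p}: since $\gcd(q,q^2-1)=1$ and $|H|$ divides $q^2-1$, multiplication by $[G:H]$ on $\Z/(q^2-1)$ has image of order exactly $|H|$, which forces the corestriction from the order-$|H|$ group $\ho{k}{H}{\Z}$ to be injective. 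Step (b) is the genuinely different and arguably cleaner contribution: it replaces both the unproved ``straightforward calculation'' and the two-step factorisation through $L$ by a one-line divisibility count driven by the factor $q$ in $[G:H]$, at the price of invoking the full strength of Corollary \ref{cor:1/p} rather than only the injectivity of corestriction from the Sylow subgroups of $\spl{2}{\F{q}}$.
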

\begin{proof}
It is clearly sufficient to consider the case where $H$ is an $\ell$-group for some prime $\ell\not= p$. In particular, $H$ is cyclic 
or generalised quaternion and $\ho{k}{H}{\Z}=\Z/|H|$. Now $H$ is contained in an $\ell$-Sylow subgroup, $L$ say, of $\spl{2}{\F{q}}$. 
It is a straightforward calculation to show that whenever $L$ is a cyclic $\ell$-group or generalized quaternion $2$-group 
and whenever $H$ is a subgroup that 
the corestriction map $\ho{k}{H}{\Z}\to\ho{k}{L}{\Z}$ is injective (for $k\equiv 3\pmod{4}$).
 On the other hand, the results above show that the corestriction 
map $\ho{k}{L}{\Z}\to\ho{k}{\spl{2}{F}}{\Z}$ is injective.  
\end{proof}

We will also need  the following result in the next section:

\begin{lem}\label{lem:trivial} 
For all $k$, the natural action of $\F{q}^\times$ on $\ho{k}{\spl{2}{\F{q}}}{\Z[1/p]}$ is trivial.
\end{lem}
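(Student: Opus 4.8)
The plan is to reduce everything to a single cyclic group and then to kill the action one prime at a time, by corestriction from Sylow subgroups. First I would dispose of the easy cases. By Corollary \ref{cor:1/p} the group $\ho{k}{\spl{2}{\F{q}}}{\Z[1/p]}$ vanishes unless $k\equiv 3\pmod 4$, where it is cyclic of order $q^2-1$; so only $k\equiv 3\pmod 4$ matters. If $p=2$ then $\sq{\F{q}}=\{1\}$ and there is nothing to prove, so assume $p$ odd. A scalar matrix has square determinant and acts trivially, and inner automorphisms of $\spl{2}{\F{q}}$ act trivially on homology, so the $\F{q}^\times$-action factors through $\sq{\F{q}}\cong\Z/2$ and depends only on the square class of $\det g$ for a chosen lift $g\in\gl{2}{\F{q}}$. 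Being an automorphism of $\Z/(q^2-1)$, it respects the primary decomposition $\bigoplus_{\ell\neq p}\psyl{(\Z/(q^2-1))}{\ell}$. Hence it suffices to show, for each prime $\ell\neq p$, that the nontrivial class acts trivially on the $\ell$-primary part, and I am free to choose a different nonsquare-determinant lift $g$ for each $\ell$.

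Next I would treat odd $\ell$ by a centraliser argument. By the corestriction isomorphism recalled at the start of this section, the whole $\ell$-part lies in the image of $\cores^{\spl{2}{\F{q}}}_{H}$ for $H$ a Sylow $\ell$-subgroup. Since corestriction is natural under conjugation, if a lift $g$ normalises $H$ and induces the identity on $\ho{k}{H}{\Z}$, then $g$ fixes this image pointwise. For $\ell\mid q-1$ I take $H=\psyl{T}{\ell}$ (Lemma \ref{lem:q-1}) and $g=\matr{a}{0}{0}{1}$ with $a$ a nonsquare: it has nonsquare determinant and centralises the diagonal torus $T\supseteq H$. For $\ell\mid q+1$ I take $H=\psyl{K}{\ell}$ (Lemma \ref{lem:q+1}) and $g=\mu(\lambda)$ a point of the nonsplit torus with $\norm{E}{\F{q}}(\lambda)$ a nonsquare, which exists since the norm is surjective; this $g$ centralises $\mu(E^\times)\supseteq K\supseteq H$. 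In both cases $g$ acts trivially on the $\ell$-part.

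The hard part, and the only real obstacle, is $\ell=2$: here the Sylow subgroup $Q$ is generalised quaternion of order $2^s$ (with $2^s$ the $2$-part of $q^2-1$), its centraliser in $\gl{2}{\F{q}}$ is only the scalars, which have square determinant, and the images of corestriction from the tori $T$ and $K$ only generate the subgroup of index $2$, so the previous argument fails on the top $\Z/2$. Instead I would work with the cyclic maximal subgroup $C$ of $Q$ (of order $2^{s-1}$), writing $Q=\langle C,y\rangle$ with $y$ inverting $C$, where $C=\psyl{T}{2}$ if $q\equiv 1\pmod 4$ and $C=\psyl{K}{2}$ if $q\equiv 3\pmod 4$. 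The key step is to produce a lift $g$ of nonsquare determinant that centralises $C$ and normalises $Q$. For $q\equiv 1$, $g=\matr{b}{0}{0}{1}$ with $b$ a generator of the $2$-Sylow subgroup of $\F{q}^\times$ works: it centralises $C=\psyl{T}{2}$ and conjugates $w=\matr{0}{1}{-1}{0}$ into $C\cdot w\subset Q$. For $q\equiv 3$, a torus element $g=\mu(\lambda)$ works, the existence of a suitable $\lambda$ being a short counting argument: the subgroup $\{\lambda\in E^\times\mid \lambda^{1-q}\in C\}$ has odd index $[K:C]$ in $E^\times$, hence surjects onto the quotient $E^\times/\norm{E}{\F{q}}^{-1}((\F{q}^\times)^2)\cong\Z/2$ and so meets the nonsquare-norm coset.

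Finally I would compute the resulting action. Since $\cores^{\spl{2}{\F{q}}}_{Q}$ induces an isomorphism $\ho{k}{Q}{\Z}\cong\psyl{\ho{k}{\spl{2}{\F{q}}}{\Z}}{2}$ (the invariants there being all of $\ho{k}{Q}{\Z}$) intertwining the $g$-action with $\alpha_*$, where $\alpha=c_g|_Q$ fixes $C$ pointwise and sends $y\mapsto x^{j}y$ for some $j$ (with $x$ a generator of $C$), it remains to show that every such automorphism $\alpha$ acts trivially on $\ho{k}{Q}{\Z}\cong\Z/2^s$. For this I would pass to cohomology and use the faithful $2$-dimensional complex representation $\rho$ of $Q$ with $\rho(x)=\matr{\zeta}{0}{0}{\zeta^{-1}}$ ($\zeta$ a primitive $2^{s-1}$-th root of unity) and $\rho(y)=\matr{0}{1}{-1}{0}$. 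Conjugating $\rho$ by $\matr{\zeta^{j}}{0}{0}{1}$ shows $\rho\circ\alpha\cong\rho$, and since $\rho$ is fixed-point free its top Chern (Euler) class $c_2(\rho)$ generates $\coh{4}{Q}{\Z}\cong\Z/|Q|=\Z/2^{s}$; therefore $\alpha^*c_2(\rho)=c_2(\rho\circ\alpha)=c_2(\rho)$, so $\alpha$ is trivial on $\coh{4}{Q}{\Z}$, hence on $\ho{3}{Q}{\Z}$, and by periodicity on $\ho{k}{Q}{\Z}$ for all $k\equiv 3\pmod 4$. Combining the three cases shows the nontrivial square class acts trivially on every $\ell$-primary part, proving the lemma.
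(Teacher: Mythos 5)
Your proof is correct, but it takes a genuinely different and much longer route than the paper's. The paper disposes of the lemma in two lines: the natural map $\ho{k}{\spl{2}{\F{q}}}{\Z[1/p]}\to\ho{k}{\spl{2}{\F{q^2}}}{\Z[1/p]}$ is injective (by the Sylow detection underlying Corollary~\ref{cor:subgp}) and is $\F{q}^\times$-equivariant, and every $a\in\F{q}^\times$ becomes a square in $\F{q^2}$, so $\an{a}$ acts trivially on the target and hence on the source. You instead work entirely inside $\spl{2}{\F{q}}$, one prime at a time: for odd $\ell$ you exhibit a nonsquare-determinant lift centralizing a (split or nonsplit) torus containing the Sylow $\ell$-subgroup, and at $\ell=2$ --- where, as you correctly note, the centralizer of the quaternion Sylow subgroup $Q$ is only the scalars --- you produce a lift normalizing $Q$ and fixing its maximal cyclic subgroup $C$ pointwise, then kill the induced automorphism on $\ho{3}{Q}{\Z}$ via the Euler class of the fixed-point-free $2$-dimensional representation. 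Both arguments rest on the same detection isomorphism $\hinv{G}{\ho{k}{H}{\Z}}\cong\psyl{\ho{k}{G}{\Z}}{\ell}$; the quadratic-extension trick buys brevity and uniformity over all primes, while your version buys explicitness --- it names the conjugating elements, and the quaternion step (any automorphism of $Q$ fixing $C$ pointwise is trivial on the periodic homology) is a reusable fact that meshes well with the explicit $\ho{3}{Q}{\Z}\to\bl{\F{q}}$ computations of sections~\ref{sec:h3g} and~\ref{sec:blochfinite}. If you keep your version, one step deserves to be written out: the identity $\mu(\lambda)y\mu(\lambda)^{-1}=\mu(\lambda^{1-q})y$ uses that $y$ acts on all of $\mu(E^\times)$ as the Galois involution, not merely by inversion on $C$; this holds because $C$ generates $E$ over $\F{q}$, so the normalizer of $C$ normalizes its centralizer $\mu(E^\times)$ and acts on it through $\gal{E}{\F{q}}$.
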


\begin{proof} By Corollary \ref{cor:1/p}, we can assume $k\equiv 3\pmod{4}$.
 
By Corollary \ref{cor:subgp}, the corestriction map 
 \[
\ho{k}{\spl{2}{\F{q}}}{\Z[1/p]}\to\ho{k}{\spl{2}{\F{q^2}}}{\Z[1/p]}
\]
is injective. But for any $a\in\F{q}^\times$, $a\in (\F{q^2}^\times)^2$ and thus $\an{a}=1$ in $\sq{\F{q^2}}$, so that $\an{a}$ 
acts trivially on $\ho{k}{\spl{2}{\F{q^2}}}{\Z[1/p]}$.
\end{proof}

\begin{cor}\label{cor:k3}
For any finite field $\F{q}$ there is a natural isomorphism
\[
\ho{3}{\spl{2}{\F{q}}}{\Z[1/p]}\cong \kind{\F{q}}=K_3(\F{q}).
\]
\end{cor}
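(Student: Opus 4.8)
The plan is to show that both sides are finite cyclic of order $q^2-1$ and that the natural map between them, recalled in the introduction, is surjective, hence an isomorphism by a cardinality count.

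First I would pin down the two groups. Quillen's computation of the $K$-theory of finite fields gives $K_3(\F{q})\cong\Z/(q^2-1)$; since $K_2(\F{q})=0$ we have $\milk{3}{\F{q}}=0$, so the decomposable part of $K_3(\F{q})$ vanishes and $\kind{\F{q}}=K_3(\F{q})\cong\Z/(q^2-1)$. For the left-hand side, I would apply Corollary \ref{cor:1/p} with $k=3$: since $3\equiv 3\pmod 4$, it yields $\ho{3}{\spl{2}{\F{q}}}{\Z[1/p]}\cong\Z/(q^2-1)$.

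Next I would invoke the natural map. As recalled in the introduction, for any field $F$ there is a natural homomorphism $\ho{3}{\spl{2}{F}}{\Z}\to\kind{F}$ inducing a surjection $\ho{3}{\spl{2}{F}}{\Z}_{F^\times}\twoheadrightarrow\kind{F}$ (see \cite{hutchinson:tao2}). Both cyclic groups above are annihilated by $q^2-1$, which is prime to $p$, so tensoring with $\Z[1/p]$ changes nothing. Inverting $p$ in the displayed surjection, and using that localization commutes with the formation of coinvariants, I obtain a surjection $(\ho{3}{\spl{2}{\F{q}}}{\Z[1/p]})_{\F{q}^\times}\twoheadrightarrow\kind{\F{q}}$. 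By Lemma \ref{lem:trivial} the group $\F{q}^\times$ acts trivially on $\ho{3}{\spl{2}{\F{q}}}{\Z[1/p]}$, so its coinvariants coincide with the module itself; thus the natural map $\ho{3}{\spl{2}{\F{q}}}{\Z[1/p]}\to\kind{\F{q}}$ is itself surjective.

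There is no deep obstacle here; the argument is an assembly of the cited inputs. The only points requiring care are the interchange of $\otimes\Z[1/p]$ with coinvariants and the observation that $\kind{\F{q}}$, being $(q^2-1)$-torsion with $q^2-1$ prime to $p$, is unaffected by inverting $p$, so that the integral surjection onto $\kind{\F{q}}$ descends to a surjection of $\Z[1/p]$-modules. A surjective homomorphism between finite groups of equal order $q^2-1$ is an isomorphism, which completes the proof and exhibits the isomorphism as the natural one of the introduction.
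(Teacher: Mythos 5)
Your reduction to a counting argument is fine as far as it goes (both groups are indeed $\Z/(q^2-1)$, by Quillen on the right and Corollary \ref{cor:1/p} on the left, and a surjection between finite groups of equal order is an isomorphism), but the key input you invoke --- the surjectivity of $\ho{3}{\spl{2}{F}}{\Z}_{F^\times}\to\kind{F}$ from \cite{hutchinson:tao2} --- is a homology-stability result that is only established for \emph{infinite} fields. The paper itself is careful about this: in the section on the refined Bloch group it states that surjectivity holds ``for any infinite field $F$'', and the whole point of Corollary \ref{cor:k3} is to supply the finite-field case by other means. What is available for any field is only the existence of the natural map $\ho{3}{\spl{2}{F}}{\Z}\to\ho{3}{\spl{}{F}}{\Z}\to\kind{F}$; whether $\ho{3}{\spl{2}{\F{q}}}{\Z}\to\ho{3}{\spl{}{\F{q}}}{\Z}$ is surjective for finite $q$ is exactly the kind of unstable-range question that cannot be quoted from the stability literature, so your argument has a genuine gap at its central step.

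The paper closes the circle in the opposite direction: it proves the natural map is \emph{injective} rather than surjective. One forms the commutative square comparing $\ho{3}{\spl{2}{\F{q}}}{\Z[1/p]}\to\kind{\F{q}}$ with $\ho{3}{\spl{}{\bar{\F{q}}}}{\Z[1/p]}\to\kind{\bar{\F{q}}}$, where the bottom map is an isomorphism for the algebraically closed field by Sah--Dupont. The right vertical map is injective by Quillen's result that $K_3(\F{q})\to K_3(\F{q^n})$ is injective, and the left vertical map is injective because the prime-to-$p$ homology of $\spl{2}{\F{q}}$ is detected on Sylow subgroups of order prime to $p$, whose corestrictions into $\spl{2}{\F{q^n}}$ remain injective by Corollary \ref{cor:subgp}. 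Injectivity of the top map then follows from the diagram, and equality of the (finite) orders upgrades it to an isomorphism. If you want to salvage your approach, you would need to prove the surjectivity onto $\kind{\F{q}}$ directly for finite fields --- e.g.\ by a detection argument on the non-split torus --- which amounts to essentially the same work.
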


\begin{proof}
For any field $F$, the Hurewicz map induces an isomorphism
\[
K_3(F)/({-1}\cdot K_2(F))\cong \ho{3}{\spl{}{F}}{\Z}
\]
(see Suslin, \cite{sus:bloch}, Corollary 5.2). Thus, for any field $F$ there is a natural composite homomorphism
\[
\ho{3}{\spl{2}{F}}{\Z}\to\ho{3}{\spl{}{F}}{\Z}\to\kind{F}.
\]
When $F$ is algebraically closed, this map is an isomorphism (Sah, \cite{sah:discrete3}). 

For a finite field $\F{q}$ we have 
\[
K_3(\F{q})=\kind{\F{q}}=\Z/(q^2-1)=\kind{\F{q}}\otimes\Z[1/p]
\]
and the functorial maps $K_3(\F{q})\to K_3(\F{q^n})$ are injective (Quillen, \cite{quillen:cohkfin})

Thus, if we let $\bar{\F{q}}$ denote an algebraic closure of $\F{q}$ we have a commutative diagram
\[
\xymatrix{
\ho{3}{\spl{2}{\F{q}}}{\Z[1/p]}\ar[r]\ar@{^{(}->}[d]&\kind{\F{q}}\ar@{^{(}->}[d]\\
\ho{3}{\spl{}{\bar{\F{q}}}}{\Z[1/p]}\ar[r]^-{\cong}&\kind{\bar{\F{q}}}
}
\]
in which the vertical arrows are injective, 
from which it follows that the top horizontal map is injective, and hence an isomorphism of finite abelian groups of equal order.
\end{proof}

In the remainder of this section, we will calculate, for completeness,  the $p$-Sylow subgroups of 
$\ho{k}{\spl{2}{\F{q}}}{\Z}$ for $k\leq 3$. 

Of course, for $g\in G$ and $z\in \ho{k}{H}{\Z}$ the condition 
\[
\rs^H_{H\cap gHg^{-1}}z
=\rs^{gHg^{-1}}_{H\cap gHg^{-1}}g\cdot z
\]
is trivially satisfied if $H\cap gHg^{-1}=\{ 1\}$. Thus, in order to determine
$\hinv{G}{\ho{k}{H}{\Z}}$ for an $\ell$-Sylow subgroup $H$, 
it is enough to consider only the set $\conj{G}{H}$ of those elements $g$ for which 
 $H\cap gHg^{-1}\not=\{ 1\}$.

A Sylow $p$-subgroup of $\spl{2}{\F{q}}$ is the group of unipotents
\[
U:= \left\{\matr{1}{a}{0}{1}\ |\ a\in \F{q}\right\}\cong \F{q}.
\]

We first determine the set $\conj{\spl{2}{\F{q}}}{U}$ of those $A\in \spl{2}{\F{q}}$ for which $AUA^{-1}\cap U\not= \{ 1\}$.  Let 
\[
A=\matr{x}{y}{z}{w}\in \spl{2}{\F{q}}.
\]
Then 
\[
A\matr{1}{a}{0}{1}A^{-1}=\matr{1-axz}{ax^2}{-az^2}{1+axz}.
\]
Thus $A\in\conj{\spl{2}{\F{q}}}{U}$ if and only if $z=0$ and in this case 
\[
A=\matr{x}{y}{0}{x^{-1}}\in B
\]
where $B$ is the subgroup of upper triangular matrices in $\spl{2}{\F{q}}$.

\begin{cor} \label{cor:B}
\begin{enumerate}
\item For all $k\geq 1$, we have 
\[
\ho{k}{B}{\Z}\cong\ho{k}{T}{\Z}\oplus \psyl{\ho{k}{B}{\Z}}{p}.
\]

\item The inclusion $B\to\spl{2}{\F{q}}$ induces an isomorphism
\[
\psyl{\ho{k}{B}{\Z}}{p}\cong\psyl{\ho{k}{\spl{2}{\F{q}}}{\Z}}{p}\cong \Inv{\ho{k}{\F{q}}{\Z}}{(\F{q}^\times)^2}.
\]
\end{enumerate}
\end{cor}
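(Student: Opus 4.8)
The plan is to exploit the structure of $B$ as the semidirect product $U \rtimes T$ established above, where $U$ is the unipotent Sylow $p$-subgroup (which I identify with the additive group $\F{q}$) and $T$ is the diagonal torus, cyclic of order $q-1$ and hence of order prime to $p$. A direct matrix computation shows that $D(x)\in T$ acts on $U\cong\F{q}$ by multiplication by $x^2$, so the resulting action of $T$ on $\ho{t}{U}{\Z}=\ho{t}{\F{q}}{\Z}$ factors through the squaring quotient $T\twoheadrightarrow(\F{q}^\times)^2$, and the $T$-(co)invariants of $\ho{t}{U}{\Z}$ coincide with its $(\F{q}^\times)^2$-(co)invariants. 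Throughout I will use the single numerical input that $|T|=q-1$ and $|(\F{q}^\times)^2|$ are prime to $p$.

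For (1) I would feed the extension $1\to U\to B\to T\to 1$ into the Lyndon--Hochschild--Serre spectral sequence $E^2_{s,t}=\ho{s}{T}{\ho{t}{U}{\Z}}\Rightarrow\ho{s+t}{B}{\Z}$. For $t\geq 1$ the module $\ho{t}{U}{\Z}$ is a finite $p$-group, and since $|T|$ is prime to $p$ it annihilates $\ho{s}{T}{-}$ for $s\geq 1$ while leaving a $p$-group a $p$-group; hence $E^2_{s,t}=0$ for $s,t\geq 1$. Only the bottom row $E^2_{s,0}=\ho{s}{T}{\Z}$ (prime-to-$p$ torsion for $s\geq 1$) and the left column $E^2_{0,t}=(\ho{t}{U}{\Z})_T$ ($p$-groups for $t\geq 1$) survive, and since every potential differential links a prime-to-$p$ group with a $p$-group it must vanish. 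So $E^2=E^\infty$, giving a short exact sequence
\[
0\to(\ho{k}{U}{\Z})_T\to\ho{k}{B}{\Z}\to\ho{k}{T}{\Z}\to 0
\]
which the section $T\hookrightarrow B$ splits. As the two outer terms have coprime torsion, the kernel $(\ho{k}{U}{\Z})_T$ is exactly $\psyl{\ho{k}{B}{\Z}}{p}$, which proves (1).

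For (2) the identification with invariants comes from a Maschke-style averaging: since $(\F{q}^\times)^2$ has order prime to $p$ and acts on the $p$-group $\ho{k}{U}{\Z}$, the idempotent $\frac{1}{|(\F{q}^\times)^2|}\sum_\sigma\sigma$ makes the natural map $(\ho{k}{U}{\Z})^T\to(\ho{k}{U}{\Z})_T$ an isomorphism, whence
\[
\psyl{\ho{k}{B}{\Z}}{p}=(\ho{k}{U}{\Z})_T\cong(\ho{k}{U}{\Z})^T=\Inv{\ho{k}{\F{q}}{\Z}}{(\F{q}^\times)^2}.
\]
For the isomorphism with $\psyl{\ho{k}{\spl{2}{\F{q}}}{\Z}}{p}$ I would invoke the stable-element machinery recalled at the start of the section: $U$ is a Sylow $p$-subgroup of both $B$ and $G:=\spl{2}{\F{q}}$, so corestriction gives $\hinv{B}{\ho{k}{U}{\Z}}\cong\psyl{\ho{k}{B}{\Z}}{p}$ and $\hinv{G}{\ho{k}{U}{\Z}}\cong\psyl{\ho{k}{G}{\Z}}{p}$. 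Because $U$ is normal in $B$ and inner automorphisms act trivially on homology, the $B$-invariance condition reduces to $T$-fixedness, so $\hinv{B}{\ho{k}{U}{\Z}}=(\ho{k}{U}{\Z})^T$; and since the earlier computation gives $\conj{G}{U}=B$, the $g$-invariance conditions defining $\hinv{G}{\ho{k}{U}{\Z}}$ are vacuous for $g\notin B$, so $\hinv{G}{\ho{k}{U}{\Z}}=\hinv{B}{\ho{k}{U}{\Z}}$. Finally transitivity of corestriction, $\cores^G_U=\cores^G_B\circ\cores^B_U$, shows that the inclusion-induced map $\cores^G_B$ restricts to the asserted isomorphism on $p$-Sylow subgroups.

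The conceptual content is light; I expect the only real care is needed in this last step, namely checking that $\hinv{G}{\ho{k}{U}{\Z}}=\hinv{B}{\ho{k}{U}{\Z}}$ genuinely follows from $\conj{G}{U}=B$, and that transitivity of corestriction upgrades the abstract agreement of the two $p$-Sylow groups into an isomorphism actually induced by the subgroup inclusion $B\hookrightarrow G$. Everything else rests on the coprimality of $|T|$ and $|(\F{q}^\times)^2|$ with $p$, which simultaneously kills the spectral-sequence differentials and forces invariants to agree with coinvariants.
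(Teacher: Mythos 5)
Your proposal is correct and follows essentially the same route as the paper: the Lyndon--Hochschild--Serre spectral sequence of $1\to U\to B\to T\to 1$ together with the coprimality of $|T|$ and $|U|$ for part (1), and the stable-element description of $\psyl{\ho{k}{-}}{p}$ combined with the computation $\conj{\spl{2}{\F{q}}}{U}=B$ and the fact that $T$ acts on $U\cong\F{q}$ by $a\mapsto a^2$ for part (2). The paper states these steps far more tersely, but the ingredients and their roles are identical to yours.
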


\begin{proof}

\begin{enumerate}
\item  The result follows from the Hochschild-Serre spectral sequence associated 
to the split short exact sequence 
\[
1\to U\to B\to T\to1
\]
together with the fact that $(|T|,|B|)=1$.

\item The Sylow $p$-subgroup $U$ of $B$ is also a Sylow $p$-subgroup 
of $\spl{2}{\F{q}}$ and the calculations above show that $\conj{\spl{2}{\F{q}}}{U}=\conj{B}{U}=B$.
Thus
\[
\psyl{\ho{k}{B}{\Z}}{p}=\hinv{B}{\ho{k}{U}{\Z}}=\hinv{\spl{2}{F}}{\ho{k}{U}{\Z}}=
\psyl{\ho{k}{\spl{2}{F}}{\Z}}{p}.
\]
The final isomorphism derives from the fact that $U\cong \F{q}$ is normal in $B$ with quotient $T\cong \F{q}^\times$ and with 
these identifications $a\in \F{q}^\times$ acts by conjugation on $U=\F{q}$ as multiplication by $a^2$.

\end{enumerate}
\end{proof}
 
\begin{lem}\label{lem:p} $\spl{2}{\F{p}}$ is $p$ periodic with $p$-period $d=d(p)$ given by 
\[
d(p)=\left\{
\begin{array}{ll}
2,&p=2\\
p-1,&p\not= 2
\end{array}
\right.
\]

Furthermore, for $k\geq 1$
\[
\psyl{\ho{k}{\spl{2}{\F{p}}}{\Z}}{p}=
\left\{
\begin{array}{ll}
\Z/p, & k\equiv -1\pmod{d(p)}\\
0, &\mbox{otherwise}
\end{array}
\right.
\]
\end{lem}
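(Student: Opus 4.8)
The plan is to specialize Corollary~\ref{cor:B} to the case $q=p$ and thereby reduce the whole statement to the homology of the cyclic Sylow subgroup $U\cong\F{p}\cong\Z/p$ equipped with the conjugation action of the torus. That corollary provides a natural isomorphism
\[
\psyl{\ho{k}{\spl{2}{\F{p}}}{\Z}}{p}\cong\Inv{\ho{k}{\F{p}}{\Z}}{(\F{p}^\times)^2},
\]
the invariants of $\ho{k}{U}{\Z}$ under the action in which $a\in\F{p}^\times$ acts on $U=\F{p}$ as multiplication by $a^2$. Since $U$ is cyclic of order $p$, I would recall the standard computation $\ho{k}{\Z/p}{\Z}\cong\Z/p$ for $k$ odd and $\ho{k}{\Z/p}{\Z}=0$ for $k\geq 2$ even, so that only odd degrees can contribute.

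The key input is to pin down how an automorphism of $U$ acts on these groups. I would establish the standard fact that, for $c\in(\Z/p)^\times$, multiplication by $c$ acts on $\ho{2j-1}{\Z/p}{\Z}\cong\Z/p$ as multiplication by $c^{j}$. For $j=1$ this is immediate, since $\ho{1}{\Z/p}{\Z}$ is the abelianization, on which the automorphism acts by $c$; the general case follows from the multiplicative structure of $\mathrm{H}^*(\Z/p,\F{p})=\Lambda(x)\otimes\F{p}[y]$ with $y=\beta(x)$ the Bockstein, on whose degree-two generator the automorphism acts by $c^{\pm 1}$ and hence by $c^{\pm j}$ on $y^{j}$, transported back to integral homology. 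This is the step I expect to be the main obstacle, since everything afterwards is formal; note that the precise sign of the exponent is irrelevant, as only the triviality of the exponent modulo $p$ will matter.

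Granting this, the conclusion is a short arithmetic check. For $p$ odd, a square $a^2$ acts on $\ho{2j-1}{U}{\Z}$ as multiplication by $(a^2)^{j}=a^{2j}$, so the invariants exhaust $\Z/p$ exactly when $a^{2j}=1$ for every $a\in\F{p}^\times$, i.e. when $(p-1)\mid 2j$; writing $k=2j-1$ this reads $k\equiv-1\pmod{p-1}$, and otherwise the invariants vanish. For $p=2$ the torus $T=\F{2}^\times$ is trivial, so $\psyl{\ho{k}{\spl{2}{\F{2}}}{\Z}}{2}=\ho{k}{\Z/2}{\Z}$, which is $\Z/2$ for $k$ odd and $0$ otherwise, i.e. nonzero exactly when $k\equiv-1\pmod 2$. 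In both cases this is precisely the asserted value: $\Z/p$ when $k\equiv-1\pmod{d(p)}$ and $0$ otherwise.

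Finally I would read off periodicity. The function $k\mapsto\psyl{\ho{k}{\spl{2}{\F{p}}}{\Z}}{p}$ is nonzero exactly on the single residue class $-1$ modulo $d(p)$, hence is periodic with period $d(p)$ and with no smaller period, and $d(p)\geq 2$ in both cases. To match the normalization of the excerpt I would verify $\ho{d(p)-1}{\spl{2}{\F{p}}}{\psyl{\Z}{p}}\cong\psyl{\Z}{p}/|G|$: since $d(p)-1\equiv-1\pmod{d(p)}$ the left-hand side is $\Z/p$, while $|G|=p(p^2-1)$ with $p\nmid(p^2-1)$, so $\psyl{\Z}{p}/|G|\cong\Z/p$ as well. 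For $p$ odd the period can alternatively be obtained from Swan's theorem, Theorem~\ref{thm:swan12}(1): exactly as in Lemma~\ref{lem:q-1}, the automorphisms of $U$ induced by conjugation in $\spl{2}{\F{p}}$ form the group $(\F{p}^\times)^2$ of order $(p-1)/2$, whence the period is $2|\Phi_p|=p-1$.
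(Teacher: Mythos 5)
Your proposal is correct and follows essentially the same route as the paper's proof: both reduce via Corollary \ref{cor:B} to the invariants of $\ho{k}{\F{p}}{\Z}$ under multiplication by squares, use the fact that multiplication by $c$ acts on $\ho{2j-1}{\Z/p}{\Z}$ as $c^{j}$ (which the paper simply asserts and you justify via the Bockstein), and obtain the period from Swan's theorem together with the resulting arithmetic. The only differences are cosmetic: you test invariance against all of $(\F{p}^\times)^2$ rather than a generator of order $(p-1)/2$, and you treat $p=2$ explicitly rather than dismissing it.
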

\begin{proof}
The first statement follows from Swan's Theorem, since $\Phi_p\cong (\F{p}^\times)^2$.

For the second statement, we can suppose $p$ is odd and let $x\in (\F{p}^\times)^2$ of order $(p-1)/2$. Then multiplication by 
$x$ on $\F{p}=\Z/p$ induces multiplication by $x^{k+1}$ on $\ho{2k+1}{\F{p}}{\Z}=\Z/p$. The statement now follows easily, since $\Z/p$ 
is invariant only if $x^{k+1}=1$. 
\end{proof}

When $q=p^f>p$, of course the integral homology is no longer $p$-periodic. However, we will calculate 
$\psyl{\ho{k}{\spl{2}{\F{q}}}{\Z}}{p}$ for $k\leq 3$.

The following is a minor variation on \cite{sus:homgln}, Lemma 1.:

\begin{lem}\label{lem:sus}
Let $m,n\geq 1$. Suppose that 
$(p-1)f>mn$. Then there exists $a\in \F{q}^\times$ such that 
for any (not necessarily distinct) $\phi_1,\ldots,\phi_n\in \gal{\F{q}}{\F{p}}$ we have $\prod_{i=1}^n\phi(a^m)\not=1$.
\end{lem}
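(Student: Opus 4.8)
The plan is to take $a$ to be a generator of the cyclic group $\F{q}^\times$ and to reduce the whole statement to an elementary fact about base-$p$ digit sums. Since every element of $\gal{\F{q}}{\F{p}}$ is a power of the Frobenius $x\mapsto x^p$, I may write $\phi_i(y)=y^{p^{j_i}}$ with $0\le j_i\le f-1$, so that
\[
\prod_{i=1}^n\phi_i(a^m)=a^{m(p^{j_1}+\cdots+p^{j_n})}.
\]
Because $a$ has order $q-1=p^f-1$, this product equals $1$ if and only if $p^f-1$ divides $x:=m(p^{j_1}+\cdots+p^{j_n})$. Thus it suffices to exhibit one generator for which no such $x$ is divisible by $p^f-1$; since $x$ depends only on the exponents $j_i$ and not on $a$, \emph{any} generator will do, and the problem becomes: rule out $(p^f-1)\mid x$ for every tuple $(j_1,\ldots,j_n)$.

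The first quantitative input I would record is an upper bound for the base-$p$ digit sum $s_p(x)$. Writing $x=\sum_{i=1}^n m\,p^{j_i}$ and using that $s_p$ is subadditive and unchanged under multiplication by a power of $p$, I obtain $s_p(x)\le \sum_{i=1}^n s_p(m\,p^{j_i})=n\,s_p(m)\le nm$. Note also that $x\ge mn\ge 1$, so $x$ is genuinely a positive integer. The hypothesis $(p-1)f>mn$ will be deployed precisely against this bound.

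The heart of the argument — and the step I expect to be the \emph{main obstacle} — is the auxiliary fact that every positive multiple $Y$ of $p^f-1$ satisfies $s_p(Y)\ge (p-1)f$, with equality at $Y=p^f-1$, whose base-$p$ expansion consists of $f$ digits all equal to $p-1$. Granting this, the lemma follows at once: if some tuple gave $(p^f-1)\mid x$, then since $x>0$ we would get $(p-1)f\le s_p(x)\le nm<(p-1)f$, a contradiction. Hence $a^x\ne 1$ for every tuple, which is exactly the assertion.

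To establish the digit-sum bound I would proceed as follows. First reduce to the case $p\nmid k$, where $Y=k(p^f-1)$: dividing the largest power of $p$ out of $k$ replaces $Y$ by a smaller positive multiple of $p^f-1$ and leaves $s_p$ unchanged, since multiplication by $p$ merely shifts digits. Then I would compute $s_p(Y)$ through the base-$p$ subtraction $Y=k\,p^f-k$. The standard borrow identity gives
\[
s_p(k\,p^f-k)=s_p(k\,p^f)-s_p(k)+(p-1)\beta=(p-1)\beta,
\]
where $\beta$ is the number of positions at which a borrow occurs and where I have used $s_p(k\,p^f)=s_p(k)$. Finally I would show $\beta\ge f$: the bottom $f$ base-$p$ digits of $k\,p^f$ are all zero, and because $p\nmid k$ a borrow is already triggered at position $0$ and then propagates unbroken through positions $1,\ldots,f-1$ (each has digit $0$ together with an incoming borrow). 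This yields at least $f$ borrows, so $s_p(Y)=(p-1)\beta\ge (p-1)f$, which completes the auxiliary fact and hence the proof.
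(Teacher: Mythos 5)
Your argument is correct, and its opening reduction coincides exactly with the paper's: take $a$ to be a generator of $\F{q}^\times$, write each $\phi_i$ as a power of Frobenius, and reduce the lemma to showing that $x=m(p^{j_1}+\cdots+p^{j_n})$ is never divisible by $p^f-1$. Where you genuinely diverge is in the combinatorial core. The paper groups the exponents (letting $s_t$ count the indices $i$ with $j_i=t$) and normalizes the generalized representation $\sum_t (ms_t)p^t$ by a \emph{cyclic carrying} operation --- replace $k_t$ by $k_t-p$ and increment $k_{t+1}$, indices taken mod $f$ --- which preserves the class mod $p^f-1$ and strictly decreases the coefficient sum; at termination all coefficients are at most $p-1$, forcing the reduced value to equal $p^f-1$ with every digit $p-1$, whence $mn=\sum_t ms_t\geq (p-1)f$, a contradiction. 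You instead isolate the clean standalone fact that \emph{every} positive multiple of $p^f-1$ has base-$p$ digit sum at least $(p-1)f$, proved by counting borrows in the subtraction $kp^f-k$ (your propagation argument for $\beta\geq f$ is right: position $0$ borrows because $p\nmid k$, and positions $1,\dots,f-1$ of $kp^f$ are zero so the borrow cannot terminate before position $f$), and you combine it with the subadditivity bound $s_p(x)\leq n\,s_p(m)\leq nm$. These are two faces of the same digit-sum phenomenon --- carrying in the paper, borrowing in yours --- and both proofs are complete and of comparable length; what your version buys is a reusable, self-contained number-theoretic lemma about multiples of $p^f-1$ and a slightly more transparent upper bound on $s_p(x)$, while the paper's carrying argument stays closer to the specific sum $\sum_i mp^{j_i}$ and avoids invoking the carry/borrow identity.
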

\begin{proof}
Let $a$ be a generator of $\F{q}^\times$. Suppose, for the sake of contradiction that there 
exist $\phi_1,\ldots,\phi_n\in\gal{\F{q}}{\F{p}}$ with $\prod_i\phi_i(a^m)=1$. Since $\gal{\F{q}}{\F{p}}$ 
is generated by $x\mapsto  x^p$, for each $i\in\{1,\dots,n\}$ there exists $k_i<f$ such that $\phi_i(x)=x^{p^{k_i}}$. Hence
\[
a^{\sum_{i=1}^nmp^{k_i}}=1
\]
and thus $\sum_{i=1}^nmp^{k_i}\equiv 0\pmod{p^f-1}$. For $0\leq t\leq f-1$ let 
$s_t\geq 0$ be the number of $i$ for which $k_i=t$. Thus
\[
\sum_{t=0}^{f-1}ms_tp^t\equiv 0\pmod{p^f-1}.
\]

Let $k_t=ms_t$ for $t<f$. Then $\sum k_t\geq (p-1)f$. For if some $k_t\geq p$, then 
replacing $k_t$ by $k_t-p$ and (ordering the $t$ cyclically) $k_{t+1}$ by $k_{t+1}+1$ 
we get a new system of $k_t$ satisfying the same congruence but having a smaller sum. By iterating this operation we arrive at a collection $k_1',\ldots,k_{f-1}'$ satisfying the congruence and also $k_t'<p$ for all $t$ and $\sum k_t'\leq \sum k_t$. But then the 
inequalities 
\[
p^f-1\leq\sum_{t=1}^{f-1}k_t'p^t\leq (p-1)(1+p+\cdots+p^{f-1})=p^f-1
\]
imply that $k_t'=p-1$ for all $t$ and hence $\sum k_t\geq \sum k_t'=(p-1)f$. 

But this gives the contradiction $(p-1)f\leq \sum k_t =m\sum s_t=mn$, proving the lemma.
\end{proof}

\begin{cor} \label{cor:sus}
Suppose that  
$(p-1)f>mn$. Let $\F{q}^\times$ act on $\F{q}$ by multiplication. Then  
\[
\Inv{\tens{n}{\Z}{\F{q}}}{(\F{q}^\times)^m}=\Inv{\Extpow{n}{\Z}{\F{q}}}{(\F{q}^\times)^m}=0.
\] 
\end{cor}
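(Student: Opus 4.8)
The plan is to deduce both vanishing statements from Lemma~\ref{lem:sus} by a semisimplicity-and-base-change argument. First I note that $\F{q}$, being an elementary abelian $p$-group, is killed by $p$, so $\tens{n}{\Z}{\F{q}}$ and $\Extpow{n}{\Z}{\F{q}}$ are finite-dimensional $\F{p}$-vector spaces on which $\F{q}^\times$ acts $\F{p}$-linearly (diagonally, via the multiplication action on each factor). The subgroup $(\F{q}^\times)^m$ of $m$-th powers has order dividing $q-1=p^f-1$, hence coprime to $p$; so by Maschke's theorem the group ring $\F{p}[(\F{q}^\times)^m]$ is semisimple and the functor of $(\F{q}^\times)^m$-invariants is exact on its modules. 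Since $\Extpow{n}{\Z}{\F{q}}$ is a quotient of $\tens{n}{\Z}{\F{q}}$ as an $\F{q}^\times$-module (the submodule generated by the $v\otimes v$ is stable under the diagonal action), its invariants form a quotient of $\Inv{\tens{n}{\Z}{\F{q}}}{(\F{q}^\times)^m}$. Thus it suffices to prove $\Inv{\tens{n}{\Z}{\F{q}}}{(\F{q}^\times)^m}=0$.

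Next I take $a$ to be the generator of $\F{q}^\times$ furnished by the proof of Lemma~\ref{lem:sus}, so that $(\F{q}^\times)^m=\langle a^m\rangle$ is cyclic and the $(\F{q}^\times)^m$-invariants coincide with the fixed space of the single element $a^m$. Since invariants, being a finite limit, commute with the flat base change $\F{p}\hookrightarrow\bar{\F{p}}$ along an algebraic closure, I may compute after tensoring with $\bar{\F{p}}$. Over $\bar{\F{p}}$ the $\F{q}^\times$-module $\F{q}\otimes_{\F{p}}\bar{\F{p}}$ splits as the direct sum of the $f$ one-dimensional characters attached to the $\F{p}$-embeddings $\sigma_j\colon x\mapsto x^{p^j}$ ($0\le j\le f-1$), on which $b\in\F{q}^\times$ acts by the scalar $\sigma_j(b)$. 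Consequently $\tens{n}{\Z}{\F{q}}\otimes_{\F{p}}\bar{\F{p}}$ is the direct sum, over tuples $(j_1,\dots,j_n)\in\{0,\dots,f-1\}^n$, of lines on which $a^m$ acts by the scalar $\prod_{i=1}^n\sigma_{j_i}(a^m)$.

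Finally, the fixed space of $a^m$ is the sum of those lines whose scalar equals $1$. Identifying the $\sigma_j$ with the elements of $\gal{\F{q}}{\F{p}}$, Lemma~\ref{lem:sus} (applicable since $(p-1)f>mn$) asserts precisely that $\prod_{i=1}^n\sigma_{j_i}(a^m)\ne 1$ for every tuple; hence no line is fixed, $a^m$ has no nonzero fixed vector, and $\Inv{\tens{n}{\Z}{\F{q}}\otimes_{\F{p}}\bar{\F{p}}}{(\F{q}^\times)^m}=0$. Descending along the faithfully flat extension gives $\Inv{\tens{n}{\Z}{\F{q}}}{(\F{q}^\times)^m}=0$, and the exterior-power vanishing then follows from the first paragraph. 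The only genuine content is Lemma~\ref{lem:sus}; the remaining steps---passing from exterior to tensor powers via coprimality, replacing the group of invariants by the eigenvalue-$1$ eigenspace of the single generator $a^m$, and the base change to $\bar{\F{p}}$---are formal, so I expect the only point requiring care to be the bookkeeping that correctly matches the eigenvalue $\prod_i\sigma_{j_i}(a^m)$ with the product $\prod_i\phi_i(a^m)$ appearing in the lemma.
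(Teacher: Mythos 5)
Your proof is correct and is essentially the paper's argument: both reduce to showing that $1$ is not an eigenvalue of multiplication by $b=a^m$ on $\tens{n}{\F{p}}{\F{q}}$, identify the eigenvalues of $\mu_b$ with the values $\phi(b)$ for $\phi\in\gal{\F{q}}{\F{p}}$, and invoke Lemma~\ref{lem:sus}. Your explicit diagonalization over $\bar{\F{p}}$ and the Maschke-type reduction for the exterior power are just more detailed renderings of the paper's eigenvalue computation and its remark that $\Extpow{n}{\F{p}}{\F{q}}$ is a quotient module of the tensor power.
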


\begin{proof}
By Lemma \ref{lem:sus}, there exists $a\in \F{q}^\times$ such that
if $b=a^m$ then $\prod_{i=1}^n\phi_i(b)\not=1$ for all $\phi_1,\ldots,\phi_n\in\gal{\F{q}}{\F{p}}$. 

Let 
$\mu_b$ denote the $\F{p}$-linear endomorphism $x\mapsto bx$ of $\F{q}$. Let $\phi$ 
be the $\F{p}$-linear endomorphism $\mu_b\otimes\cdots\otimes\mu_b$ of 
$\tens{n}{\F{p}}{\F{q}}$. Then $b$ acts as $\phi$ on $\tens{n}{\F{p}}{F}$. Thus
$b$
 fixes a nonzero element of this space if and only if $1$ is not an eigenvalue of 
$\phi$. The eigenvalues of $\phi$ are products of the form $\lambda_1\cdots\lambda_n$ 
where $\lambda_1,\ldots,\lambda_n$ are (not necessarily distinct) eigenvalues of 
$\mu_b$. But the eigenvalues of $\mu_b$ are precisely the values of  the 
elements of $\gal{\F{q}}{\F{p}}$ at $b$. This proves the result. 

The same argument applies to $\Extpow{n}{\Z}{\F{q}}=\Extpow{n}{\F{p}}{\F{q}}$, 
since it is a quotient module of $\tens{n}{\F{p}}{\F{q}}$ for the 
action of $\F{q}^\times$.  
\end{proof}

\begin{lem}\label{lem:charp}
$\psyl{\ho{k}{\spl{2}{\F{q}}}{\Z}}{p}=\Z/p$ if  
\[
(k,q)\in \{ (1,2),(1,3),(2,4),(2,9), (3,2),(3,3),(3,4),(3,5),(3,8),(3,9),(3,27)\}.
\]
and $\psyl{\ho{k}{\spl{2}{\F{q}}}{\Z}}{p}=0$ for any other value of $(k,q)$ with $k\leq 3$.
\end{lem}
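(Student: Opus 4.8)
The plan is to convert the problem, via Corollary \ref{cor:B}(2), into a computation of invariants for the additive group $\F{q}$, and then treat the three degrees separately. Corollary \ref{cor:B}(2) gives a natural isomorphism
\[
\psyl{\ho{k}{\spl{2}{\F{q}}}{\Z}}{p}\cong \Inv{\ho{k}{\F{q}}{\Z}}{(\F{q}^\times)^2},
\]
where $a\in\F{q}^\times$ acts on the additive group $\F{q}$ as multiplication by $a^2$; equivalently the squares $(\F{q}^\times)^2$ act by multiplication. Since $\card{\F{q}^\times}=q-1$ is prime to $p$, the group ring $\F{p}[\F{q}^\times]$ is semisimple, so taking invariants is exact and additive over short exact sequences of $\F{p}[\F{q}^\times]$-modules; this lets me compute on graded pieces. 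For $k=1$ we have $\ho{1}{\F{q}}{\Z}=\F{q}$, and the invariants are nonzero exactly when the squares act trivially, i.e. $(\F{q}^\times)^2=\{1\}$, which happens only for $q=2,3$ (recovering Lemma \ref{lem:h1}).

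For $k=2$ we have $\ho{2}{\F{q}}{\Z}=\Extpow{2}{\Z}{\F{q}}$. Corollary \ref{cor:sus} (with $n=m=2$) shows the invariants vanish once $(p-1)f>4$, leaving only the finitely many pairs with $(p-1)f\le 4$; among these $\Extpow{2}{\Z}{\F{q}}=0$ whenever $f=1$, so the only candidates are $q=4,8,16$ ($p=2$) and $q=9$ ($p=3$). The action of $a$ on the eigenline indexed by $0\le i<j\le f-1$ is the character $a\mapsto a^{2(p^i+p^j)}$, which is trivial precisely when $(q-1)\mid 2(p^i+p^j)$. A direct check of these characters leaves exactly $q=4$ and $q=9$ (for which the single line $\Extpow{2}{\Z}{\F{q}}$ carries the trivial character $\norm{\F{q}}{\F{p}}(a^2)=1$), each contributing $\Z/p$, while $q=8,16$ contribute $0$.

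The case $k=3$ is the heart of the argument and needs the $\F{q}^\times$-module structure of $\ho{3}{\F{q}}{\Z}$, where $\F{q}\cong(\Z/p)^f$. From the mod-$p$ Poincar\'e series $(1-t)^{-f}$ one obtains $\dim{\F{p}}{\ho{3}{\F{q}}{\Z}}=\binom{f}{3}+\binom{f+1}{2}$, and the functorial description of the integral homology of abelian groups yields a natural, hence $\F{q}^\times$-equivariant, short exact sequence
\[
0\to \Extpow{3}{\Z}{\F{q}}\to \ho{3}{\F{q}}{\Z}\to S(\F{q})\to 0,
\]
in which the exterior piece has rank $\binom{f}{3}$ and the complementary piece $S(\F{q})$ has rank $\binom{f+1}{2}$ and the same $\F{q}^\times$-character as the symmetric square $\sym{2}{\Z}{\F{q}}$ for the twisted action (equivalently, $S(\F{q})$ is the symmetric ``$\xi\tau$''-part of $\ho{3}{\F{q}}{\F{p}}$ after splitting off the Bockstein image $\ho{2}{\F{q}}{\Z}$). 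By semisimplicity this splits equivariantly, so I compute invariants on each piece. Corollary \ref{cor:sus} applied to $\Extpow{3}{\Z}{\F{q}}$ ($n=3,m=2$) and to $S(\F{q})$, a subquotient of $\tens{2}{\Z}{\F{q}}$ ($n=2,m=2$), forces the invariants to vanish once $(p-1)f>6$, reducing to the eleven candidate pairs $q\in\{2,4,8,16,32,64\}$ ($p=2$), $q\in\{3,9,27\}$ ($p=3$), $q=5$, $q=7$. For each I would list the two families of characters: on $S(\F{q})$ the character indexed by $0\le i\le j\le f-1$ is $a\mapsto a^{2(p^i+p^j)}$, trivial iff $(q-1)\mid 2(p^i+p^j)$; on $\Extpow{3}{\Z}{\F{q}}$ the character indexed by $0\le i<j<k\le f-1$ is $a\mapsto a^{2(p^i+p^j+p^k)}$, trivial iff $(q-1)\mid 2(p^i+p^j+p^k)$. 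Running through the candidates, $S(\F{q})$ contributes a single invariant exactly for $q\in\{2,3,4,5,9\}$, and $\Extpow{3}{\Z}{\F{q}}$ a single invariant exactly for $q\in\{8,27\}$ (here $f=3$ and $2(1+p+p^2)=2(q-1)/(p-1)$ is divisible by $q-1$ iff $(p-1)\mid 2$, i.e. $p\in\{2,3\}$). These sets being disjoint, each of the seven values $q\in\{2,3,4,5,8,9,27\}$ acquires exactly one invariant line, giving $\Z/p$, and every other candidate gives $0$, which is precisely the asserted list.

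The main obstacle is the structural input in the third paragraph: pinning down the $\F{q}^\times$-equivariant structure of $\ho{3}{\F{q}}{\Z}$ --- in particular that the non-exterior graded piece is a symmetric square (rather than the full tensor square or an exterior square) and that it carries the twist $a\mapsto a^2$ on each factor. Once this is correct, everything reduces to the uniform vanishing supplied by Lemma \ref{lem:sus} and Corollary \ref{cor:sus}, together with a finite, explicit check of the divisibility conditions $(q-1)\mid 2(p^i+\cdots)$ for the eleven remaining pairs.
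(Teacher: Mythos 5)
Your proposal is correct and follows essentially the same route as the paper: reduction to $\Inv{\ho{k}{\F{q}}{\Z}}{(\F{q}^\times)^2}$ via Corollary \ref{cor:B}(2), uniform vanishing from Corollary \ref{cor:sus}, a decomposition of $\ho{3}{\F{q}}{\Z}$ into an exterior-cube piece and a symmetric piece (the paper writes the latter as the direct summand $(\F{q}\otimes_{\Z}\F{q})^\sigma$, which has the same composition factors as your $S(\F{q})$, so the invariant count agrees), and a finite check of the surviving cases. The only organizational differences are that the paper dispatches $k=1$ and $q=p$ separately via Lemma \ref{lem:h1} and Swan's periodicity (Lemma \ref{lem:p}), carries out the residual cases with explicit eigenvectors rather than your divisibility conditions $(q-1)\mid 2(p^i+\cdots)$, and for $p=2$ uses the sharper bound $f\le n$ coming from $(\F{q}^\times)^2=\F{q}^\times$, which removes your extra candidates $q=16,32,64$ without computation.
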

\begin{proof}
The cases $k=1$ or $q=p$ are already covered by Lemmas \ref{lem:h1} and \ref{lem:p}. So we can suppose that $f,k\geq 2$.

We recall that
\[
\ho{2}{\F{q}}{\Z}=\Extpow{2}{\Z}{\F{q}}
\]
and 
\[
\ho{3}{\F{q}}{\Z}\cong\left(\Extpow{3}{\Z}{\F{q}}\right)\oplus\left(\F{q}\otimes_\Z\F{q}\right)^\sigma
\]
where the second term denotes the subgroup fixed by the twist operator $\sigma$ (sending $x\otimes y$ to $y\otimes x$).

By Corollary \ref{cor:sus}, $\Inv{\Extpow{n}{\Z}{\F{q}}}{(\F{q}^\times)^2}=0$ unless $p=2$ and $f\leq n$ or $p>2$ and 
$(p-1)f\leq 2n$.  
Of course, $\Extpow{n}{\Z}{\F{q}}=0$ if $n>f$. Thus when $n=2$ we need only consider the cases $f=2$ and $p=2$ or $3$; i.e. 
$q=4$ or $9$.

We observe also $\Extpow{f}{\Z}{\F{q}}\cong \F{p}$ and that $x\in \F{q}^\times$ acts on this module as 
multiplication by $N_{\F{q}/\F{p}}(x)$.
  
Thus $\Inv{\Extpow{2}{\Z}{\F{4}}}{\F{4}^\times}=\Z/2$ since the norm map $\F{4}^\times\to\F{2}^\times$ is necessarily trivial.

Similarly $\Inv{\Extpow{2}{\Z}{\F{9}}}{(\F{9}^\times)^2}=\Z/3$ since $(\F{9}^\times)^2$ is the kernel of the norm map to $\F{3}^\times$.

This dispenses with the case $k=2$.

Now suppose $k=3$. By the remarks above, $\Inv{\Extpow{3}{\Z}{\F{q}}}{(\F{q}^\times)^2}=0$ unless $q=8$ or $27$. In both of these 
cases we obtain  $\Inv{\Extpow{3}{\Z}{\F{q}}}{(\F{q}^\times)^2}=\Z/p$ as in the case $n=2$. 

Again, by Corollary \ref{cor:sus}, $\Inv{(\F{q}\otimes\F{q})^\sigma}{(\F{q}^\times)^2}=0$ unless $q=4$ or $9$. We consider these 
cases individually:

$q=4$: Let $\F{4}=\F{2}(a)$ where $a^2=1+a$. 
Then $\left(\tens{2}{\F{2}}{F}\right)^\sigma$ is a $3$-dimensional $\F{2}$-space with 
basis $e_1:=1\otimes 1$, $e_2:=a\otimes a$ and $e_3:=1\otimes a+a\otimes 1$. If $\phi$ is the map induced by multiplication by 
$a$, then it has a $1$-dimensional $1$-eigenspace with basis $e_3$. Thus
\[
\psyl{\ho{0}{F^\times}{\ho{3}{\spl{2}{F}}{\Z}}}{p}\cong \Z/2.
\]

$q=9$: We let $\F{9}=\F{3}(i)$ where $i^2=-1$ and $\lambda:=1-i$ is an element of order $8$. 
 Then $(\tens{2}{\F{3}}{\F{9}})^\sigma$ is  the $3$-dimensional subspace of 
 $\tens{2}{\F{3}}{\F{9}}$ with basis $e_1=1\otimes 1$,  $e_2= i\otimes i$, and 
 $e_3=1\otimes i + i \otimes 1$. Then conjugation by $D(\lambda)$ induces 
 $\phi:=\mu_{\lambda^2}\otimes\mu_{\lambda^2}= \mu_i\otimes\mu_i$ on this space. 
 Since $\phi(e_1)=e_2$, $\phi(e_2)=e_1$ and $\phi(e_3)=2e_3$, it easily follows that 
  $\Inv{(\F{9}\otimes\F{9})^\sigma}{(\F{9}^\times)^2}$ is the $1$-dimensional subspace with basis 
  $e:=e_1+e_2$.  
 
This complete the proof of the lemma.
\end{proof}

\section{The third homology of $\spl{2}{F}$}\label{sec:main}

\subsection{Statement of the main theorem}
We recall two standard subgroups of $\spl{2}{F}$:
\[
T:=\left\{D(a)=\matr{a}{0}{0}{a^{-1}}\ |\ a\in F^\times\right\}\qquad 
B:=\left\{\matr{a}{b}{0}{a^{-1}}\ |\ a\in F^\times, b\in F\right\}
\]

\begin{lem}
\label{lem:hob}

If $F$ is an infinite field, then the inclusion $T\to B$ induces homology isomorphisms
\[
\ho{k}{T}{\Z}\cong\ho{k}{B}{\Z}
\]
for all $k\geq 0$.
\end{lem}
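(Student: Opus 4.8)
The plan is to exploit the semidirect‑product decomposition $B = U \rtimes T$, where $U = \set{\matr{1}{b}{0}{1}}{b \in F}\cong (F,+)$ is the unipotent radical (this is precisely the split extension $1\to U\to B\to T\to 1$ already used in Corollary \ref{cor:B}), and to run its Lyndon--Hochschild--Serre spectral sequence. Concretely I would write down the homology spectral sequence
\[
E^2_{p,q} = \ho{p}{T}{\ho{q}{U}{\Z}} \Longrightarrow \ho{p+q}{B}{\Z},
\]
note that the section $T\to B$ makes the composite $\ho{k}{T}{\Z}\to\ho{k}{B}{\Z}\to\ho{k}{T}{\Z}$ the identity, so that the edge homomorphism $\ho{k}{T}{\Z}\to\ho{k}{B}{\Z}$ (which is exactly the inclusion‑induced map) is split injective, and then reduce the entire statement to the vanishing
\[
E^2_{p,q} = \ho{p}{T}{\ho{q}{U}{\Z}} = 0 \qquad \text{for all } q \geq 1.
\]
Once the rows $q\geq 1$ vanish the sequence degenerates and the surviving edge isomorphism $\ho{k}{B}{\Z}\cong E^2_{k,0}=\ho{k}{T}{\Z}$ is inverse to the inclusion‑induced map, which is what we want.

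To prove this vanishing I would fix $q\geq 1$, set $M:=\ho{q}{U}{\Z}$, and invoke the standard \emph{centre‑kills} principle: since $T$ is abelian, each $D(c)\in T$ is central, so the automorphism of $M$ it induces acts as the identity on $\ho{p}{T}{M}$ for every $p$. Now $D(c)$ acts on $U\cong(F,+)$ by multiplication by $c^2$, so its action on $M=\ho{q}{U}{\Z}$ is the endomorphism $\alpha_{c^2}$ functorially induced by the multiplication map $m_{c^2}\colon F\to F$. Hence $\alpha_{c^2}-\mathrm{Id}$ annihilates $\ho{p}{T}{M}$ for every $p$ and every $c\in F^\times$. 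The strategy is then to produce a single $c$ for which $\alpha_{c^2}-\mathrm{Id}$ is an \emph{automorphism} of $M$: an endomorphism that is simultaneously an isomorphism and zero on $\ho{p}{T}{M}$ forces $\ho{p}{T}{M}=0$.

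The remaining point — and the real obstacle — is the invertibility of $\alpha_{c^2}-\mathrm{Id}$ on $M=\ho{q}{U}{\Z}$. In characteristic $0$ this is easy: $\Q\subset F$, and taking $c=2$ turns $m_{c^2}=m_4$ into scalar multiplication by $4$ on the $\Q$‑vector space $F$; functoriality then makes $\alpha_4$ act as multiplication by the integer $4^q$ on $M=\Extpow{q}{\Z}{F}$, so $\alpha_4-\mathrm{Id}$ is multiplication by $4^q-1\neq 0$, invertible because $M$ is a $\Q$‑vector space. In characteristic $p$ the prime field is too small for this trick, and this is exactly where the hypothesis that $F$ is \emph{infinite} becomes essential: here I would choose $c$ so that the scalar $c^2$ is sufficiently generic, via a counting argument in the spirit of Lemma \ref{lem:sus} and Corollary \ref{cor:sus}, to guarantee that the map $\alpha_{c^2}$ — built functorially from $m_{c^2}$ on the tensor/exterior constituents of $\ho{q}{U}{\Z}$ — has neither a nonzero fixed part nor a cokernel, i.e. that $\alpha_{c^2}-\mathrm{Id}$ is bijective. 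Establishing this invertibility over an arbitrary infinite field of characteristic $p$ (where $F$ may be algebraic over $\F{p}$ and $M$ carries the more intricate structure of $\ho{q}{U}{\Z}$ rather than a single exterior power) is the technical heart of the argument; the finite‑field obstructions recorded in Lemma \ref{lem:charp} confirm that the infiniteness assumption cannot be dropped.
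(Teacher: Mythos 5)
The paper itself disposes of this lemma by citation (``a special case of Lemma 9 in \cite{hutchinson:mat}''), and your framework --- the Lyndon--Hochschild--Serre spectral sequence of the split extension $1\to U\to B\to T\to 1$, the splitting making the edge map $\ho{k}{T}{\Z}\to\ho{k}{B}{\Z}$ injective, and the reduction to $\ho{p}{T}{\ho{q}{U}{\Z}}=0$ for $q\geq 1$ via the centre-kills principle --- is exactly the standard route by which that cited lemma is proved. The computation that $D(c)$ acts on $U\cong(F,+)$ by $c^2$ and hence that $\alpha_{c^2}-\mathrm{Id}$ annihilates $\ho{p}{T}{\ho{q}{U}{\Z}}$ is correct. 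The problem is the step you yourself flag as ``the technical heart'': producing a single $c$ for which $\alpha_{c^2}-\mathrm{Id}$ is an \emph{automorphism} of $M=\ho{q}{U}{\Z}$. This is not merely unproven in your write-up; it is false for some infinite fields. Take $F=\F{3}(t)$ and $q=2$, so $M=\Extpow{2}{\Z}{F}=\Extpow{2}{\F{3}}{F}$. For $c\in\F{3}^\times$ one has $c^4=1$, so $\alpha_{c^2}-\mathrm{Id}=0$ on $M\neq 0$. For $c\notin\F{3}$, set $K=\F{3}(c)$, let $\sigma$ be the automorphism of $K$ with $\sigma(c)=c^{-1}$, and consider the nonzero quotient $F\otimes_{\F{3}}F\twoheadrightarrow F\otimes_{K}F^{\sigma}$ (the second factor with $K$-structure twisted by $\sigma$); on it $\mu_{c^2}\otimes\mu_{c^2}$ acts as $c^2\sigma(c^2)=1$, and the antisymmetric part maps to something nonzero (e.g. the image of $t\otimes 1-1\otimes t$ when $c=t$ is $t-t^{-1}\neq 0$). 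Hence $\alpha_{c^2}-\mathrm{Id}$ is never surjective on $\Extpow{2}{\Z}{F}$. So no choice of $c$ can close the argument as you have set it up, and your appeal to Lemma \ref{lem:sus} does not help here because that lemma needs $(p-1)f$ large, i.e. a large finite subfield, which $\F{3}(t)$ does not possess.

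The fix is to make the choice of $c$ local rather than global. Since group homology commutes with filtered colimits, a class in $\ho{p}{T}{\ho{q}{U}{\Z}}$ is supported on finitely generated subgroups $T'\subset T$ and $U'\subset U$; in characteristic $p$ the subgroup $U'$ is \emph{finite}, and in characteristic $0$ one may instead argue with the $\Q$-vector-space structure as you did. Because $F$ is infinite one can choose $c$ (depending on $U'$) with $c^2U'\cap U'=0$, so that $U'+c^2U'=U'\oplus c^2U'$ and, by the K\"unneth decomposition of $\ho{q}{U'\oplus c^2U'}{\Z}$, the two maps $U'\hookrightarrow U$ and $\mu_{c^2}\colon U'\to U$ induce maps of $\ho{q}{U'}{\Z}$ into complementary direct summands. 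After enlarging $T'$ to contain $D(c)$, centre-kills identifies the images of your class under these two maps, forcing the class to die in the colimit. This is the argument that actually uses infiniteness of $F$, and it is consistent with your correct observation that Lemma \ref{lem:charp} shows the conclusion fails for finite fields. As written, however, your proof has a genuine gap at its central step, and the particular invertibility statement you propose to prove there cannot be true in the stated generality.
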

\begin{proof}
This is, for example, a special case of Lemma 9 in \cite{hutchinson:mat}.
\end{proof}

\begin{rem} 
For finite fields, the calculations of section \ref{sec:finite} show that this result fails in general. Thus, when 
$F$ is finite of characteristic $p$, we have 
\[
\ho{k}{B}{\Z}=\ho{k}{T}{\Z}\oplus\psyl{\ho{k}{B}{\Z}}{p}=\ho{k}{T}{\Z}\oplus\Inv{\ho{k}{F}{\Z}}{(F^\times)^2}
\]
and we tabulate the finite number of fields such that $\psyl{\ho{k}{B}{\Z}}{p}\not= 0$ for $k\leq 3$. 
\end{rem}

The rest of this section will to devoted to the proof of 

\begin{thm}\label{thm:main} Let $F$ be a field with at least $4$ elements.
\begin{enumerate}
\item
If $F$ is infinite, there is a natural complex 
\[
0\to\zhalf{\Tor{\mu_F}{\mu_F}}\to\ho{3}{\spl{2}{F}}{\zhalf{\Z}}\to\zhalf{\rbl{F}}\to 0.
\]
which is exact everywhere except possibly at the middle term. The middle homology is annihilated by $4$.

\item 
If $F$ is finite of odd characteristic, there is a complex 
\[
0\to\ho{3}{B}{\Z}\to\ho{3}{\spl{2}{F}}{\zhalf{\Z}}\to\zhalf{\rbl{F}}\to 0
\]
which is exact except possible at the middle term, where the homology has order at most $2$ .

\item 
If $F$ is finite of characteristic $2$, there is an exact sequence 
\[
0\to\ho{3}{B}{\Z}\to\ho{3}{\spl{2}{F}}{\zhalf{\Z}}\to\zhalf{\rbl{F}}\to 0.
\]
\end{enumerate}
\end{thm}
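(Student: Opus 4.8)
The plan is to derive all three statements from a single device: the action of $G=\spl{2}{F}$ on the complex $\Z X_\bullet$ of tuples of distinct points of $\projl{F}$, together with the orbit and stabilizer analysis already carried out in Corollary \ref{cor:xnsl}. For an infinite field the augmented complex $\Z X_\bullet\to\Z$ is acyclic (this is the classical point at which infinitude of $F$ enters), so the associated first-quadrant equivariant-homology spectral sequence
\[
E^1_{p,q}=\ho{q}{G}{\Z X_{p+1}}\Longrightarrow\ho{p+q}{G}{\Z}
\]
abuts to the homology we want. First I would identify the $E^1$-page by Shapiro's Lemma: since $G$ is transitive on points and on ordered pairs, the columns $p=0,1$ give $\ho{q}{B}{\Z}$ and $\ho{q}{T}{\Z}$, while the stabilizer of any ordered tuple of at least three points is the centre $\{\pm I\}$, so for $p\geq 2$ the columns are free $\sgr{F}$-modules tensored with $\ho{q}{\{\pm I\}}{\Z}$. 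The bottom row $q=0$ is exactly the complex $(\Z X_\bullet)_G$ of Corollary \ref{cor:xnsl}, whose cokernel at the $X_4$-spot is $\rpb{F}$ (Lemma \ref{lem:pbpres}) and whose next differential into $(\Z X_3)_G\cong\sgr{F}$ is, up to sign, the map $\lambda_1:\gpb{x}\mapsto\pf{1-x}\pf{x}$ foreshadowed earlier.

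The core computation is the analysis of the corner $p+q\leq 4$. Using Lemma \ref{lem:hob} to replace $\ho{q}{B}{\Z}$ by $\ho{q}{T}{\Z}=\ho{q}{F^\times}{\Z}$, the $d^1$ between the columns $p=0,1$ becomes $\mathrm{id}-\iota_*$, where $\iota$ is inversion on $F^\times$; since $\iota$ acts by $(-1)^q$ on $\Extpow{q}{\Z}{F^\times}$ but trivially on the $\Tor{\mu_F}{\mu_F}$-part of $\ho{3}{F^\times}{\Z}$, taking kernels and cokernels isolates, on the one hand, $2$-torsion coming from the exterior powers and, on the other, a copy of $\Tor{\mu_F}{\mu_F}$. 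I would then show that $E^2_{3,0}=\ker{\lambda_1}\subseteq\rpb{F}$, that the higher differential out of the $X_5$-column (the $5$-term relation, $E_{4,0}$) kills the surviving $\Extpow{2}{\Z}{F^\times}$ in the $(1,2)$-spot, and that the secondary differential on $E_{3,0}$ realises $\lambda_2$, so that $E^\infty_{3,0}\cong\ker{\bw}=\rbl{F}$. Reading off the induced filtration on $\ho{3}{G}{\Z}$, the bottom graded piece is then $\rbl{F}$ and the remaining graded pieces assemble to $\Tor{\mu_F}{\mu_F}$; this produces the complex of part (1) together with its natural maps, and the indeterminacy at the middle term is precisely the ambiguity left by the centre-columns and the $\mathrm{id}-\iota_*$ maps, each annihilated by $2$, so that the combined obstruction is annihilated by $4$.

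For finite fields the complex $\Z X_\bullet$ is no longer acyclic, so the same spectral sequence does not abut to the desired homology; instead I would feed in the explicit calculations of Section \ref{sec:finite}. In characteristic $2$ the group $\sq{F}$ is trivial, hence $\sgr{F}=\Z$, the centre $\{\pm I\}$ is trivial, and $\rbl{F}=\bl{F}$ (Lemma \ref{lem:rblfin}); the $2$-primary ambiguity that obstructs exactness in the infinite case disappears, and I expect the short exact sequence of part (3), with the corestriction $\ho{3}{B}{\Z}\to\ho{3}{\spl{2}{F}}{\Z}$ as kernel (so that $\ho{3}{B}{\Z}$ plays the role of $\Tor{\mu_F}{\mu_F}$ enlarged by the exotic $p$-primary homology of $B$). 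In odd characteristic the centre contributes a single residual $\Z/2$ — the very $\Z/2$ that extends $\Tor{\mu_F}{\mu_F}$ to Suslin's $\covtor{F}$ — so the middle homology has order at most $2$, giving part (2). Consistency of orders can be verified directly against $\ho{3}{\spl{2}{\F{q}}}{\Z}$ using Corollary \ref{cor:k3} and the value of $\bl{\F{q}}$.

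The hard part will be the $2$-primary bookkeeping in the infinite case. The centre $\{\pm I\}$ forces every column with $p\geq 2$ to contribute $\Z/2$'s in odd total degree, and the maps $\mathrm{id}-\iota_*$ introduce further $2$-torsion, so the differentials $d^2$ and $d^3$ into and out of the relevant corner are controlled only up to a power-of-$2$ ambiguity. Establishing that two successive such obstructions multiply to an exponent-$4$ indeterminacy — and, separately, verifying that the secondary differential on $E_{3,0}$ genuinely agrees with $\lambda_2$, so that one lands on $\rbl{F}$ rather than merely on $\ker{\lambda_1}$ — is where the essential work lies. The identification of the torsion of $\ho{3}{F^\times}{\Z}$ with $\Tor{\mu_F}{\mu_F}$ and of the inversion action on it is a necessary supporting input throughout.
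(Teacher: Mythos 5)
Your plan for part (1) is, in all essentials, the paper's own proof: the same spectral sequence for the action of $\spl{2}{F}$ on tuples of distinct points of $\projl{F}$, the same Shapiro identification of the first two columns with $\ho{\bullet}{B}{\Z}$ and $\ho{\bullet}{T}{\Z}$ and of the higher columns with free $\sgr{F}$-modules tensored with $\ho{\bullet}{\mu_2}{\Z}$, the same reading of the bottom row as the presentation of $\rpb{F}$ with $d^1=-\lambda_1$, the same isolation of $\Tor{\mu_F}{\mu_F}$ via the inversion action on $\ho{3}{T}{\Z}$, and the same comparison with the $\gl{2}{F}$-spectral sequence to see that the $d^3$ leaving the $\rpbker{F}$-spot realises $\lambda_2$, so that the relevant $E^\infty$-term is $\rbl{F}$.

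There are, however, two concrete gaps. First, your assertion that for finite fields ``the same spectral sequence does not abut to the desired homology'' is false, and if it were true you would have no access to parts (2) and (3) at all, since feeding in the homology computations of section \ref{sec:finite} cannot substitute for convergence. The point (Lemma \ref{lem:acyclic} together with Lemma \ref{lem:spectral}) is that the homology of the complex of tuples of distinct points of a set of cardinality $c$ is concentrated in degree $c$, so for $|F|=q\geq 4$ the spectral sequence still converges to $\ho{n}{\spl{2}{F}}{\Z}$ for all $n\leq q-1$, in particular for $n=3$. The finite-field calculations enter for a different reason: they supply the splitting $\ho{k}{B}{\Z}\cong\ho{k}{T}{\Z}\oplus A_k(F)$ in place of Lemma \ref{lem:hob}, the vanishing $F^\times\wedge F^\times=0$ which kills one of the two obstruction terms, and the injectivity of $\Tor{\mu_F}{\mu_F}\to\ho{3}{\spl{2}{F}}{\Z}$. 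Second, your explanation of why the copy of $\Extpow{2}{\Z}{F^\times}=\ho{2}{T}{\Z}$ sitting at your $(1,2)$-spot contributes only $2$-torsion is not correct: conjugation by $w$ is inversion on $T$ and hence acts as $(-1)^2=+1$ on $\Extpow{2}{\Z}{F^\times}$, so the map $\mathrm{id}-\iota_*$ is identically zero there and kills nothing, and the centre plays no role at that spot. The actual mechanism is to identify the $d^3$ out of the $X_5$-column, via a truncation lemma, with the map $\ho{2}{\spl{2}{F}}{W_1}\to\ho{2}{\spl{2}{F}}{L_1}$, conclude that the cokernel is the image of a map $\mu:\ho{2}{T}{\Z}\to\ho{2}{\spl{2}{F}}{W_0}$, and then show by an explicit chain computation that $\mu\circ w_2=-\mu$ while $w_2=\mathrm{id}$, forcing $2\mu=0$; this yields $2\cdot E^\infty=0$ at that spot but not, in general, vanishing. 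You rightly flag this as the hard part, but the device you propose for it would not produce the bound.
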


\subsection{Preliminaries}

Let $G$ be a group and let $P$ be a (left) $G$-set. We can use the action of $G$ on $P$ to study the homology of $G$ in the following way.
Let $X_n$ be the set consisting of ordered $n$-tuples of distinct points of $P$. We let $G$ act on $X_n$ via a diagonal action. Let
\[
C_n=C_n(P)=
\left\{
\begin{array}{ll}
\Z[X_n], &n\geq 1\\
\Z,&n=0
\end{array}
\right.   
\]

Let $d=d_n:C_n\to C_{n-1}$ be the $\Z[G]$-module homomorphism determined by
\[
d_n(x_1,\ldots,x_n)=\sum_{i=1}^n(-1)^{i+1}(x_1,\ldots,\hat{x_i},\ldots,x_n)
\]
for $n\geq 2$ and $d_1(x)=1$ for all $x\in P=X_1$. Then $C_\bullet=(C_n,d_n)$ is a complex of $\Z[G]$-modules. It is almost 
acyclic:
\begin{lem}\label{lem:acyclic}
Suppose that the set $P$ has cardinality $c$.  Then $H_n(C)=0$ if $n\not=c$.

In particular, $C_\bullet$ is acyclic if $P$ is infinite.
\end{lem}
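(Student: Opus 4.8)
The plan is to prove the statement not for $C_\bullet$ in isolation but for a family of \emph{relative} complexes, which is what makes the natural induction close up.

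\emph{Reductions.} First I would dispose of the easy ranges. Since $P$ has no $n$-tuples of distinct points once $n>c$, we have $C_n=\Z[X_n]=0$ for $n>c$, so $H_n(C)=0$ there automatically; and $d_1\colon C_1\to C_0=\Z$ is onto (as $P\neq\emptyset$), so $H_0(C)=0$. For infinite $P$ I would write $C_\bullet(P)=\colim_{Q}C_\bullet(Q)$ over the finite subsets $Q\subseteq P$ ordered by inclusion: homology commutes with filtered colimits, and for each fixed $n$ the finite case below gives $H_n\bigl(C_\bullet(Q)\bigr)=0$ as soon as $\card{Q}>n$, so the colimit vanishes in every degree. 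Hence it is enough to treat finite $P$ with $\card{P}=c$ and show $H_n\bigl(C_\bullet(P)\bigr)=0$ for $n\neq c$.

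\emph{A relative complex.} For a finite set $P$ and a subset $S\subseteq P$, let $C_\bullet(P;S)$ be the complex that is free abelian on the ordered tuples of distinct points of $P$ whose entries include all of $S$ (a tuple of length $\ell$ sitting in degree $\ell$), with differential deleting, with the usual alternating signs, only those entries lying outside $S$. Then $C_\bullet(P;\emptyset)=C_\bullet(P)$ (the empty tuple giving $C_0=\Z$ and $d_1(x)=1$), while $C_\bullet(P;P)$ is concentrated in degree $c$, being $\Z[\text{orderings of }P]$ with zero differential. I would prove, by induction on the number of free letters $r:=\card{P}-\card{S}$, the uniform statement
\[
H_n\bigl(C_\bullet(P;S)\bigr)=0 \quad\text{for all } n\neq\card{P}.
\]
The base case $r=0$, i.e. $S=P$, is exactly the concentrated complex just described.

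\emph{Inductive step.} For $r>0$ choose a free letter $p_0\in P\setminus S$ and split the basis tuples according to whether they contain $p_0$. The tuples avoiding $p_0$ form the subcomplex $C_\bullet(P\setminus\{p_0\};S)$; the quotient is $C_\bullet(P;S\cup\{p_0\})$, since deleting the now-free letter $p_0$ lands in the subcomplex and so dies in the quotient. This gives a short exact sequence
\[
0\to C_\bullet(P\setminus\{p_0\};S)\to C_\bullet(P;S)\to C_\bullet(P;S\cup\{p_0\})\to 0,
\]
whose outer terms each have strictly fewer free letters. To pin down the connecting maps I would use the chain homotopy $s$ that prepends $p_0$ (and is $0$ on tuples already containing $p_0$); a face-by-face check gives $d_{n+1}s_n+s_{n-1}d_n=P_{p_0}$, where $P_{p_0}$ is the identity on tuples avoiding $p_0$ and is ``move $p_0$ to the front, with the induced sign'' on the rest. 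The only consequence I need is that $P_{p_0}$ is the identity on the subcomplex: a cycle $z$ there satisfies $z=P_{p_0}z=d(sz)$, so it bounds in $C_\bullet(P;S)$. Thus the inclusion induces the zero map on homology, the long exact sequence of the pair collapses into
\[
0\to H_n\bigl(C_\bullet(P;S)\bigr)\to H_n\bigl(C_\bullet(P;S\cup\{p_0\})\bigr)\to H_{n-1}\bigl(C_\bullet(P\setminus\{p_0\};S)\bigr)\to 0,
\]
and for $n\neq\card{P}$ the middle group is $0$ by induction (the quotient has top degree $\card{P}$), forcing $H_n\bigl(C_\bullet(P;S)\bigr)=0$. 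Taking $S=\emptyset$ recovers the finite case of the lemma, and the colimit step finishes the infinite case.

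\emph{Main obstacle.} The one genuinely delicate point is the sign bookkeeping in $d s+s d=P_{p_0}$, and specifically verifying that the error term $P_{p_0}-\id{}$ is supported entirely on tuples containing $p_0$, so that it is annihilated on the subcomplex; once that is in hand, identifying the two outer terms of the short exact sequence with smaller relative complexes, collapsing the long exact sequence, and the well-foundedness of the induction on $r=\card{P}-\card{S}$ are all formal. I would pay particular attention to the boundary cases $S=\emptyset$ versus $S\neq\emptyset$ (presence or absence of the degree-$0$ term), as this is where an off-by-one in the homotopy computation could hide.
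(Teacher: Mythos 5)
Your proof is correct. Its engine is the same one the paper uses --- the operator that prepends a fixed point, together with the observation that the failure of $ds+sd=\mathrm{id}$ is supported on tuples already containing that point --- but the two arguments are organized quite differently. The paper works directly inside $C_\bullet$: given a cycle $z\in C_n$ it picks $n+1$ distinct points and applies the composite $(dS_{x_{n+1}}-\mathrm{id})\cdots(dS_{x_1}-\mathrm{id})$, tracking that the accumulated error lies in the subgroup $D_n(\{x_1,\ldots,x_{n+1}\})$ of $n$-tuples containing all $n+1$ points, which is zero; expanding the composite exhibits $z$ as a boundary. That is a single self-contained computation which applies verbatim to infinite $P$. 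You instead promote the ``tuples containing $S$'' bookkeeping into genuine auxiliary complexes $C_\bullet(P;S)$ carrying a \emph{modified} differential (deleting only entries outside $S$), and run an induction on $\card{P}-\card{S}$ through the short exact sequences $0\to C_\bullet(P\setminus\{p_0\};S)\to C_\bullet(P;S)\to C_\bullet(P;S\cup\{p_0\})\to 0$, invoking the homotopy once per step only to show that the inclusion is null on homology; the infinite case then requires your separate (routine) filtered-colimit reduction. What your version buys is that each step is a standard homological-algebra move and the homotopy identity is only ever used in the clean case of tuples avoiding $p_0$, where it reads $ds+sd=\mathrm{id}$ on the nose; the cost is having to introduce and verify the auxiliary complexes --- in particular you should say a word about why the truncated differential still satisfies $d^2=0$ (the usual cancellation of the terms ``delete $i$ then $j$'' and ``delete $j$ then $i$'' restricts to the positions outside $S$) --- and losing the uniform treatment of infinite $P$. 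Both are complete proofs; I checked your sign computation $ds+sd=P_{p_0}$ and the identification of sub- and quotient complexes, and they are as you claim.
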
 
\begin{proof}
If $S\subset P$, we will let $D_n(S)$ denote the subgroup of $C_n$ generated by those $n$-tuples in which 
all elements of $S$ occur. Thus $D_n(S)=0$ if $S$ has more than $n$ elements, and $D_n(S_1\cup S_2)=D_n(S_1)\cap D_n(S_2)$.

For $x\in P$ we define $\Z[G]$-homomorphisms $S_x:C_n\to C_{n+1}$ by 
\[
S_x(x_1,\ldots,x_n)=
\left\{
\begin{array}{ll}
(x,x_1,\ldots,x_n),&\mbox{ if } x\not\in\{ x_1,\ldots,x_n\}\\
0,&\mbox{ otherwise }
\end{array}
\right.
\]

Thus, if $(x_1,\ldots,x_n)\in X_n$ and $x\not\in\{ x_1,\ldots,x_n\}$ then 
\begin{eqnarray*}
dS_x(x_1,\ldots,x_n)=d(x,x_1,\ldots,x_n)=(x_1,\ldots,x_n)-S_xd(x_1,\ldots, x_n)
\end{eqnarray*}

On the other hand, if $x=x_j$ for some $j$, then 
\[
S_x(d(x_1,\ldots,x_n))=(-1)^{j+1}(x_j,x_1,\ldots,\hat{x_j},\ldots,x_n)
\]
and thus
\begin{eqnarray*}
0=d(S_x(x_1,\ldots,x_n)=(x_1,\ldots,x_n)-S_x(d(x_1,\ldots,x_n))
-\left\{ (x_1,\ldots,x_n)+(-1)^{j}(x_j,x_1,\ldots,x_n)\right\}
\end{eqnarray*}

Either way, whether $x$ belongs to $\{x_1,\ldots,x_n\}$ or not, we have
\[
dS_x(x_1,\ldots,x_n)=(x_1,\ldots,x_n)-S_x(d(x_1,\ldots,x_n))+w
\]
where $w\in D_n(\{ x\})$. Furthermore, if $(x_1,\ldots,x_n)\in D_n(S)$, then 
$w\in D_n(S\cup \{ x\})$.

Now suppose that $x_1,\ldots,x_{n+1}$ are $n+1$ distinct elements of $P$. Let $z\in C_n$ be a cycle. 
Then
\[
(dS_{x_1}-\id{})z=S_{x_1}(dz)+z_1=z_1
\] 
where $z_1$ is a cycle and $z_1\in D_n(\{ x_1\})$. 

Thus $(dS_{x_2}-\id{})(z_1)=z_2$ where $z_2$ is a cycle in $D_n(\{ x_1,x_2\})$.  Repeating the process, we get 
\[
(dS_{x_{n+1}}-\id{})(dS_{x_n}-\id{})\cdots(dS_{x_1}-\id{})(z)\in D_n(\{ x_1,\ldots,x_{n+1}\})=0.
\]

This has the form $dy+(-1)^{n+1}(z)=0$ and thus $z=d((-1)^ny)$ is a boundary, as required.
\end{proof}
\begin{rem}
If $P$ is finite of size $c\geq 2$, then it is easy to see that $H_c(C)\not=0$; in fact, a straightforward Euler characteristic 
calculation shows that it is a free abelian group of 
rank
\[
c!\left(\frac{1}{2!}-\frac{1}{3!}+\cdots +(-1)^{c}\frac{1}{c!}\right).
\]
\end{rem}

Let now $L_\bullet=L_\bullet(P)$ be the complex defined by 
\[
L_n:=
\left\{
\begin{array}{ll}
C_{n+1}, & n\geq 0\\
0, & n<0
\end{array}
\right.
\]
If $P$ is infinite, Lemma \ref{lem:acyclic} shows that $L_\bullet$ is weakly equivalent to the $\Z$ 
(considered as a complex concentrated in dimension $0$) and more 
generally, if $P$ has cardinality $c$ then $H_n(L)=0$ for $n\not=0,c-1$ and $H_0(L)\cong \Z$.  

\begin{lem}\label{lem:spectral}
Let $L_\bullet$ be a complex of $\Z[G]$-modules and suppose that $H_n(L)=0$ for $1\leq n\leq k$. Then 
\[
\ho{n}{G}{L_\bullet}=\ho{n}{G}{\hoz{0}{L}}\mbox{ for } 0\leq n\leq k.
\] 
\end{lem}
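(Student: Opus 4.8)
The plan is to read $\ho{n}{G}{L_\bullet}$ as the hyperhomology of $G$ with coefficients in the complex $L_\bullet$ and to extract the conclusion from the standard hyperhomology spectral sequence. Concretely, I would fix a projective resolution $P_\bullet\to\Z$ of $\Z$ over $\igr{G}$, form the first-quadrant double complex $P_p\otimes_{\igr{G}}L_q$ (recall $L_q=0$ for $q<0$), and define $\ho{n}{G}{L_\bullet}$ as the homology of its total complex. Taking homology first along $L_\bullet$---which commutes with $P_p\otimes_{\igr{G}}-$ because each $P_p$ is flat over $\igr{G}$---and then along $P_\bullet$ produces the spectral sequence
\[
E^2_{p,q}=\ho{p}{G}{\hoz{q}{L_\bullet}}\Longrightarrow \ho{p+q}{G}{L_\bullet},
\]
which converges because the double complex lies in the first quadrant. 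Here the third entry of $E^2_{p,q}$ is the honest $G$-module $\hoz{q}{L_\bullet}$, so $E^2_{p,q}$ is ordinary group homology.

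Next I would feed in the hypothesis. By assumption $\hoz{q}{L_\bullet}=0$ for $1\leq q\leq k$, whence $E^2_{p,q}=0$ whenever $1\leq q\leq k$. Fix $n$ with $0\leq n\leq k$ and examine the antidiagonal $p+q=n$. If $q\geq k+1$ then $p=n-q\leq k-(k+1)<0$, so $E^2_{p,q}=0$ for degree reasons; if $1\leq q\leq k$ then $E^2_{p,q}=0$ by hypothesis; and $q<0$ cannot occur. Hence the only possibly nonzero $E^2$-term in total degree $n$ is
\[
E^2_{n,0}=\ho{n}{G}{\hoz{0}{L_\bullet}}.
\]

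It remains to check that this term survives to $E^\infty$. The differentials have the form $d^r\colon E^r_{p,q}\to E^r_{p-r,q+r-1}$. Every differential into $E^r_{n,0}$ emanates from $E^r_{n+r,1-r}$, which vanishes since $1-r<0$ for $r\geq2$. Every differential out of $E^r_{n,0}$ lands in $E^r_{n-r,r-1}$, where $q=r-1\geq1$; thus either $1\leq r-1\leq k$, so the target vanishes by hypothesis, or $r-1\geq k+1$, in which case $p=n-r\leq k-(k+2)<0$ and the target vanishes for degree reasons. Therefore $E^2_{n,0}=E^\infty_{n,0}$, and as it is the unique nonzero term in total degree $n$ the edge map gives the asserted natural isomorphism $\ho{n}{G}{L_\bullet}\cong\ho{n}{G}{\hoz{0}{L_\bullet}}$ for $0\leq n\leq k$. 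The step I would check most carefully is precisely this bookkeeping of differentials, where the two sources of vanishing---the homological hypothesis on $\hoz{q}{L_\bullet}$ in the band $1\leq q\leq k$ and the first-quadrant vanishing $p,q\geq0$---must be combined correctly; the flatness of each $P_p$ (used to identify the $E^1$-page) and the convergence of the spectral sequence are the only background facts needed, and both are automatic in this setting.
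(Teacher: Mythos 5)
Your proof is correct and takes essentially the same route as the paper: both read $\ho{n}{G}{L_\bullet}$ as the homology of the total complex of (projective resolution)$\,\otimes_{\Z[G]}L_\bullet$ and run the first (hyperhomology) spectral sequence $E^2_{p,q}=\ho{p}{G}{\hoz{q}{L}}\Rightarrow\ho{p+q}{G}{L_\bullet}$, using the vanishing band $1\leq q\leq k$ to isolate $E^2_{n,0}$ in total degree $n\leq k$. You are simply more explicit than the paper about the flatness identification of the $E^1$-page and the bookkeeping of differentials into and out of $E^r_{n,0}$.
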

\begin{proof}
Recall (see, for example, \cite{brown:coh}, VII.5) that $\ho{n}{G}{L_\bullet}$ is by definition the homology of the 
total complex  
\[
B_\bullet\otimes_{\Z[G]}L_\bullet.
\]
where $B_\bullet$ is a (right) projective resolution of $\Z$ over $\Z[G]$.

This is the total complex of a bounded double complex and there are thus two filtrations and two associated spectral sequences 
converging to $\ho{n}{G}{L_\bullet}$. The first takes the form 
\[
E^2_{p,q}=\ho{p}{G}{\hoz{q}{L}}\Longrightarrow \ho{p+q}{G}{L_\bullet},
\] 
with differentials $d^r:E^r_{p,q}\to E^r_{p-r,q+r-1}$. 

By our assumptions, $E^2_{p,q}=0$ for $1\leq q\leq k$. In particular, all higher differentials leaving $E^r_{p,0}, 0\leq p\leq k$ are 
$0$, so that $E^\infty_{p,0}=E^2_{p,0}= \ho{p}{G}{\hoz{0}{L}}$ for $p\leq k$, and there are no other nonzero terms in (total) 
 dimension at most $k$.
\end{proof}

In particular, if $P$ has cardinality $c$, then $\ho{n}{G}{L_\bullet(P)}=\ho{n}{G}{\Z}$ for $n\leq c-2$.  

The second spectral sequence for $\ho{n}{G}{L_\bullet}$ has the form 
\[
E^1_{p,q}=\ho{p}{G}{L_q}\Longrightarrow \ho{p+q}{G}{L_\bullet},\quad d^r:E^r_{p,q}\to E^r_{p+r-1,q-r}.
\] 
The map $d^1:E^1_{p,q}=\ho{p}{G}{L_q}\to \ho{p}{G}{L_{q-1}}=E^1_{p,q-1}$ is just the map induced by $d_q:L_q\to L_{q-1}$. 

Thus, by Lemma \ref{lem:spectral}, when $L_\bullet=L_\bullet(P)$ for a $G$-set $P$ of cardinality $c$,  we have a spectral sequence 
with $E^1_{p,q}=\ho{p}{G}{L_q}$ whose abutment in dimensions less than $c-2$ is $\ho{n}{G}{\Z}$.


We now apply 
this set-up to the particular case $G=\spl{2}{F}$ and $P=\projl{F}$ (the resulting spectral sequence has been studied elsewhere; 
for example in \cite{mazz:sus}). If $F$ has $q$ elements, then $\projl{F}$ has $q+1$ elements 
and thus we have a spectral sequence which abuts to $\ho{k}{\spl{2}{F}}{\Z}$ for $k\leq q-1$. Since we wish to use the spectral 
sequence to calculate $\ho{3}{\spl{2}{F}}{\Z}$, we will require that $q\geq 4$; i.e. 
in the spectral sequence arguments  below $F$ is a field with at least $4$ elements. 
 

\subsection{Module structure on the spectral sequence} 
Our spectral sequence has a natural graded module structure (in a sense to be detailed) 
over the Pontryagin ring $\ho{\bullet}{\mu_2}{\Z}$ which facilitates the 
calculation of some higher differentials. More generally, we have the following  situation:

Let $G$ be a group and let $H$ be a subgroup of the centre, $Z(G)$, of $G$. The integral homology of $G$ is a graded module for the 
Pontryagin ring $\ho{\bullet}{H}{\Z}$ of the abelian group $H$ (see, for example, Brown \cite{brown:coh}, Chapter V): 
Let $B_\bullet$ (respectively $B'_\bullet$) be a right projective 
resolution of $\Z$ over $\igr{G}$ (respectively $\igr{H}$). Then $B'\otimes B$ is a projective resolution of $\Z$ over $\igr{H\times G}$.
Let
\[
\tau:B'\otimes_\Z B\to B
\]
be a map an augmentation-preserving chain map compatible with the group homomorphism $H\times G\to G, (h,g)\mapsto h\cdot g$. Then 
the induced composite map 
\[
(B'\otimes_{\igr{H}}\Z)\otimes(B\otimes_{\igr{G}}\Z)\to (B'\otimes B)\otimes_{\igr{H\times G}}\Z \to B\otimes_{\igr{G}}\Z
\] 
induces the required homomorphisms
\[
\ho{k}{H}{\Z}\otimes\ho{p}{G}{\Z}\to \ho{k+p}{G}{\Z}
\]
which define the module structure.

Now suppose that  $C_\bullet$ is a complex of $\igr{G}$-modules which is weakly equivalent to $\Z$, considered as complex concentrated 
in dimension $0$.  Then, as noted, we have a spectral sequence abutting to $\ho{\bullet}{G}{\Z}$ associated to the double complex
\[
D_{p,q}=B_\bullet\otimes_{\igr{G}}C_\bullet.
\]
If we further assume that $H$ acts trivially on the complex $C_\bullet$, then, using $\tau$, we obtain (replacing $\Z$ by 
$C_\bullet$ above) a map of double complexes
\[
(B'_\bullet\otimes_{\igr{H}}\Z)\otimes(B_\bullet\otimes_{\igr{G}}C_\bullet)\to 
(B'_\bullet\otimes B_\bullet)\otimes_{\igr{H\times G}}C_\bullet \to B_\bullet\otimes_{\igr{G}}C_\bullet
\]
which induces, for all $r\geq 1$ 
maps
\[
\ho{k}{H}{\Z}\otimes E^r_{p,q}\to E^r_{k+p,q}
\]
such that the diagrams 
\[
\xymatrix{
\ho{k}{H}{\Z}\otimes E^r_{p,q}\ar[r]\ar[d]^-{(-1)^k\otimes d^r}& E^r_{k+p,q}\ar[d]^-{d^r}\\
\ho{k}{H}{\Z}\otimes E^r_{p+r-1,q-r}\ar[r]&E^r_{k+p+r-1,q-r}
}
\]
commute; i.e. we have $d^r(\alpha\cdot z)=(-1)^k\alpha\cdot d^r(z)$ for $\alpha\in\ho{k}{H}{\Z}$, $z\in E^r_{p,q}$. 

\subsection{The $E^1$-page of the spectral sequence}

Let $X_n$ denote the set of ordered $n$-tuples of distinct points of $\projl{F}$ and $L_n=\zhalf{\Z}X_{n+1}$.  Thus there is a spectral 
sequence of the form 
\[
E^1_{p,q}=\ho{p}{\spl{2}{F}}{L_q}\Longrightarrow \ho{p+q}{\spl{2}{F}}{\zhalf{\Z}}
\]
derived from the double complex 
\[
E^0_{\bullet,\bullet}=B_\bullet\otimes_{\zhalf{\Z}[\spl{2}{F}]} L_\bullet
\]
where $B_\bullet$ is the standard (right) bar resolution of $\Z$ over $\spl{2}{F}$, tensored with $\zhalf{\Z}$.

Let $\delta:F^\times \to \gl{2}{F}$ be the map $a\mapsto \mathrm{diag}(a,1)$ (a splitting of the determinant map). 

Let $F^\times$ act on $E^0_{p,q}$ by
\[
a\cdot\left([g_1|\cdots|g_p]\otimes (x_0,\ldots,x_q)\right)=[\delta(a)g_1\delta(a)^{-1}|\cdots |\delta(a)g_p\delta(a)^{-1}]
\otimes \delta(a)\cdot (x_0,\ldots,x_q).
\]

This action makes $E^0_{\bullet,\bullet}$ into a double complex of $\zhalf{\sgr{F}}$-modules and the induced actions on 
$E^1_{p,q}=\ho{p}{\spl{2}{F}}{L_q}$ are the natural actions derived from the $\gl{2}{F}$-action on $L_q$ and the short exact sequence
$1\to \spl{2}{F}\to\gl{2}{F}\to F^\times\to 1$.

The $E^1$-page of the spectral sequence is easily calculated using Shapiro's Lemma since the $\spl{2}{F}$-modules $L_n$ are permutation 
modules, and hence induced modules.

Thus, $\spl{2}{F}$ acts transitively on $X_1=\projl{F}$ and the stabilizer of $(\infty)$ is $B$. Thus 
\[
L_0=\zhalf{\Z}[X_1]\cong\zhalf{\Z}[B\backslash\spl{2}{F}]\cong\Ind{\zhalf{\Z}[B]}{\zhalf{\Z}[\spl{2}{F}]}{\zhalf{\Z}}
\]
so that 
\[
E^1_{p,0}=\ho{p}{\spl{2}{F}}{L_0}\cong\ho{p}{B}{\zhalf{\Z}}
\] 
by Shapiro's Lemma.

Similarly, $\spl{2}{F}$ acts transitively on $X_2$ and the stabilizer of $(0,\infty)$ is $T$. So 
\[
E^1_{p,1}=\ho{p}{\spl{2}{F}}{L_1}\cong\ho{p}{T}{\zhalf{\Z}}.
\]

For $n\geq 3$ the stabilizer of an element $(x_1,\ldots,x_n)$ in $\spl{2}{F}$ is $\mu_2(F)=Z(\spl{2}{F})$. Using 
Corollary \ref{cor:xnsl} above, it follows that for $q\geq 2$, and when the characteristic of $F$ is not $2$, we have 
\[
E^1_{p,q}=\sgr{F}[Z_{q-2}]\otimes\ho{p}{\mu_2}{\Z}\cong\left\{
\begin{array}{ll}
\sgr{F}[Z_{q-2}],&p=0\\
0,& p>0 \mbox{ even}\\
\sgr{F}[Z_{q-2}]\otimes\Z/2,& p>0 \mbox{ odd}\\
\end{array}
\right.
\] 
where $Z_n$ is the set of ordered $n$-tuples $[z_1,\ldots,z_n]$ of distinct points of $\projl{F}\setminus\{\infty,0,1\}$. When 
the characteristic is $2$, of course, $\mu_2(F)=\{ 1\}$ and $E^1_{p,q}=0$ whenever $p\geq 1$ and $q\geq 2$. 

Note also 
that for $q\geq 2$, the module structure mentioned above is reflected in the tensor product decomposition of the terms; if 
$\alpha\in\ho{p}{\mu_2}{\Z}$ and $z\in E^1_{p,q}=\sgr{F}[Z_{q-2}]$, then 
\[
\alpha\cdot z= z\otimes\alpha \in \sgr{F}[Z_{q-2}]\otimes\ho{k}{\mu_2}{\Z}=E^1_{p,q}.
\] 

Thus our $E^1$-page has the form
\begin{eqnarray*}
\xymatrix{
\vdots&\vdots&\vdots&\vdots&\\
\zhalf{\sgr{F}}[Z_2]\ar[d]^-{d^1}&\sgr{F}[Z_2]\otimes \mu_2\ar[d]^-{d^1}
&\vdots&\vdots&\hdots\\
\zhalf{\sgr{F}}[Z_1]\ar[d]^-{d^1}&\sgr{F}[Z_1]\otimes\mu_2\ar[d]^-{d^1}&0&\sgr{F}[Z_1]\otimes\Z/2\ar[d]^-{d^1}&\hdots\\
\zhalf{\sgr{F}}\ar[d]^-{d^1}&\sgr{F}\otimes\mu_2\ar[d]^-{d^1}&0&\sgr{F}\otimes\Z/2\ar[d]^-{d^1}&\hdots\\
\zhalf{\Z}\ar[d]^-{d^1}&\ho{1}{T}{\zhalf{\Z}}\ar[d]^-{d^1}&\ho{2}{T}{\zhalf{\Z}}\ar[d]^-{d^1}&\ho{3}{T}{\zhalf{\Z}}\ar[d]^-{d^1}&\hdots\\
\zhalf{\Z}&\ho{1}{B}{\zhalf{\Z}}&\ho{2}{B}{\zhalf{\Z}}&\ho{3}{B}{\zhalf{\Z}}&\hdots
}
\end{eqnarray*}
when the characteristic of $F$ is not $2$.

\subsection{ The  $E^2$-page}

By the calculations of section \ref{sec:bloch} above the differential
\[
d^1:E^1_{0,4}=\zhalf{\sgr{F}}[Z_2]\to\zhalf{\sgr{F}}[Z_1]=E^1_{0,3}
\]
is given by
\[
[x,y]\mapsto S_{x,y}=\gpb{x}-\gpb{y}+\an{x}\gpb{\frac{y}{x}}-\an{x^{-1}-1}
\gpb{\frac{1-x^{-1}}{1-y^{-1}}}+\an{1-x}\gpb{\frac{1-x}{1-y}}.
\]
Thus $E^1_{0,3}/\image{d^1}:=\zhalf{\rpb{F}}$.

On the other hand,  for $x\in Z_1$ we have 
\begin{eqnarray*}
d^1(\gpb{x})=d((0,\infty,1,x))=(\infty,1,x)-(0,1,x)+(0,\infty,x)-(0,\infty,1)
\end{eqnarray*}
which corresponds to the element
\begin{eqnarray*}
&\an{\phi(\infty,1,x)}-\an{\phi(0,1,x)}+\an{\phi(0,\infty,x)}-\an{\phi(0,\infty,1)}\\
=&\an{1-x}-\an{x(1-x)}+\an{x}-\an{1}=-\pf{x}\pf{x(x-1)}\\
=&-\pf{1-x}\pf{x}=-\lambda_1(\gpb{x})\in \zhalf{\sgr{F}}=E^1_{0,2}.
\end{eqnarray*}

Thus $E^2_{0,3}=\rpbker{F}:=\ker{\lambda_1:\zhalf{\rpb{F}}\to\aug{F}^2}$. 


Using the module structure, the map 
\[
d^1:E^1_{1,3}=\sgr{F}[Z_1]\otimes\mu_2 \to \sgr{F}\otimes\mu_2=E^1_{1,2}
\]
is the map $[x]\otimes z\mapsto -\lambda_1([x])\otimes z$. Thus 
\[
\frac{E^1_{1,2}}{\image{d^1}}=\frac{\sgr{F}}{\gwrel{F}}
\otimes\mu_2=\gw{F}\otimes\mu_2
\]

Similarly, the map $d^1:E^1_{0,2}=\sgr{F}\to E^1_{0,1}=\Z$ is easily seen to be the 
natural augmentation homomorphism sending $\an{x}\in\sq{F}$ to $1$, and hence the differential
\[
d^1:E^1_{1,2}=\sgr{F}\otimes\mu_2\to F^\times\cong\ho{1}{T}{\Z}=E^1_{1,1}
\]
sends $\an{x}\otimes z$ to $z$ (for all $x\in F^\times$, $z\in\mu_2\subset F^\times$). 
It follows that $E^2_{1,2}= \gwaug{F}\otimes \mu_2$.

Similarly, we obtain that $E^2_{1,3}=\rpbker{F}\otimes\mu_2$ (keeping in mind that all the groups $\sgr{F}[Z_i]$ 
are $\Z$-free). 

Now let 
\[
w:=\matr{0}{-1}{1}{0}\in\spl{2}{F}.
\]
Then $w(\infty)=0$ and $w(0)=\infty$. It follows easily that the differential
\[
d^1:E^1_{p,1}=\ho{p}{T}{\zhalf{\Z}}\to \ho{p}{B}{\zhalf{\Z}}= E^1_{p,0}
\]
is the composite
\begin{eqnarray*}
\xymatrix{
\ho{p}{T}{\zhalf{\Z}}\ar[r]^-{w_p-1}
&
\ho{p}{T}{\zhalf{\Z}}\ar[r]
&\ho{p}{B}{\zhalf{\Z}}
}
\end{eqnarray*}
where $w_p:\ho{p}{T}{\zhalf{\Z}}\to\ho{p}{T}{\zhalf{\Z}}$ is the map induced by conjugation by $w$. 
However, conjugating by $w$ 
is just the inversion map on $T\cong F^\times$. For future 
convenience, we will define
\[
A_i(F)=\left\{
\begin{array}{ll}
0,& F\mbox{ is infinite}\\
\psyl{\ho{i}{B}{\Z}}{p}=\psyl{\ho{i}{\spl{2}{F}}{\Z}}{p},& F\mbox{ is finite of characteristic $p$}. 
\end{array}
\right.
\] 

Thus $d^1=w_1-1:E^1_{1,1}=F^\times\to E^1_{1,0}=\ho{1}{B}{\Z}=F^\times\oplus A_1(F)$ is the map $x\mapsto x^{-2}$. It 
follows that $E^2_{1,0}=\sq{F}\oplus A_1(F)$. 

Furthermore, $w_2$ is the identity map on $\ho{2}{T}{\zhalf{\Z}}=F^\times\wedge F^\times$ 
and hence $d^1:E^1_{2,1}\to E^1_{2,0}$ 
is the zero map. So $E^2_{2,0}=E^3_{2,0}=\ho{2}{B}{\zhalf{\Z}}= \Extpow{2}{}{F^\times}\oplus A_1(F)$.

Recall that 
\[
E^1_{3,1}=\ho{3}{T}{\Z}\cong\ho{3}{F^\times}{\Z}\cong\Extpow{3}{\Z}{F^\times}\oplus\Tor{\mu_F}{\mu_F}
\]
and 
\[
E^1_{3,0}=\ho{3}{B}{\Z}\cong\ho{3}{T}{\Z}\oplus A_3(F).
\]
The the map $d^1:E^1_{3,1}\to E^1_{3,0}$ restricts to the identity on the factors $\Extpow{2}{}{F^\times}$ and to the zero 
map on $\Tor{\mu_F}{\mu_F}$. It follows that $E^2_{3,0}=\Tor{\mu_F}{\mu_F}\oplus A_3(F)$.

Thus the relevant part of the $E^2$-page has the form
\begin{eqnarray*}
\xymatrix{
\rpbker{F}\ar[ddr]^-{d^2}&\rpbker{F}\otimes\mu_2\ar[ddr]^-{d^2}&0&\vdots\\
\gwaug{F}\ar[ddr]^-{d^2}&\gwaug{F}\otimes\mu_2\ar[ddr]^-{d^2}&0&\vdots\\
0&0&\Extpow{2}{\Z}{F^\times}&\vdots\\
\Z&\sq{F}\oplus A_1(F)&\Extpow{2}{\Z}{F}\oplus A_2(F)&\Tor{\mu_F}{\mu_F}\oplus A_3(F)
}
\end{eqnarray*}

\subsection{The $E^3$-page}
We begin by observing that since the edge homomorphisms 
$A_i(F)\to \ho{i}{\spl{2}{F}}{\Z}$ are necessarily injective, it follows that the base terms $E^r_{p,0}$ always factor 
in the form $G^r_{p,0}\oplus A_p(F)$ and that any differential $d^r$ with target $E^r_{p,0}$ has image contained in 
$G^r_{p,0}$.  

Now the differential $d^2:E^2_{0,2}=\gwaug{F}\to \sq{F}\subset E^2_{2,0}$ has been calculated by Mazzoleni 
(\cite{mazz:sus}, Lemma 5): it sends $\pf{x}$ to $\an{x}$ (for $x\not= 1$). 

If $\an{x}\otimes -1\in \gwaug{F}\otimes\mu_2=E^2_{1,2}$, it follows - using the module structure on the spectral 
sequence - that
\[
d^2(\an{x}\otimes -1)=-d^2(\an{x})\cdot -1=x\wedge -1 \in F^\times\wedge F^\times \subset E^2_{2,0}.
\] 
(Here we use the fact that under the identification $F^\times\wedge F^\times=\ho{2}{F^\times}{\Z}$, the wedge 
product corresponds to the Pontryagin product on homology).

Thus $E^3_{2,0}=\sextpow{2}{\Z}{F^\times}\oplus A_2(F)$ where we set
\[
\sextpow{2}{\Z}{F^\times}:=\frac{F^\times\wedge F^\times}{F^\times\wedge \mu_2}.
\]

If $F$ is a finite field $F^\times\wedge F^\times=0$ and $E^3_{1,2}=E^\infty_{1,2}$ is a quotient of 
$\gwaug{F}\otimes\mu_2\cong\mu_2(F)$. Thus $E^\infty_{1,2}$ has order at most $2$ if $F$ is finite of odd characteristic, and 
is $0$ if $F$ is finite of characteristic $2$. 

In any case, for any field $F$, the term $E^3_{1,2}=E^\infty_{1,2}$ is annihilated by $2$.  

Of course, the differential $d^2:E^2_{3,0}=\rpbker{F}\to E^2_{1,1}=0$ is necessarily the zero map, and it follows, using 
the module structure again, that the differential $d^2:E^2_{3,1}=\rpbker{F}\otimes\mu_2\to \Extpow{2}{}{F^\times}=E^2_{2,2}$ 
is also the zero map.   

Thus the relevant part of the $E^3$-page has the form
\begin{eqnarray*}
\xymatrix{
E^3_{0,4}\ar[dddrr]^-{d^3}&\vdots&\vdots&\vdots\\
\rpbker{F}\ar[dddrr]^-{d^3}&\vdots&\vdots&\vdots\\
\gwaug{F}^2&E^3_{1,2}&\vdots&\vdots\\
0&0&\Extpow{2}{\Z}{F^\times}&\vdots\\
\Z&A_1(F)&\sextpow{2}{\Z}{F}\oplus A_2(F)&\Tor{\mu_F}{\mu_F}\oplus A_3(F)
}
\end{eqnarray*}

\subsection{The $E^4$-page}
We begin by calculating the differential
 \[
d^3:E^3_{0,3}=\zhalf{\rpbker{F}}\to \sextpow{2}{\Z}{F^\times}
\subset E^3_{2,0}.
\]

Let 
\[
E(GL)^1_{p,q}=\ho{p}{\gl{2}{F}}{L_q}\Longrightarrow \ho{p+q}{\gl{2}{F}}{\zhalf{\Z}}
\]
be the spectral sequence derived form the the action of $\gl{2}{F}$ on the complex $L_\bullet$ and converging to the integral homology of 
$\gl{2}{F}$. (This spectral has been studied in \cite{hutchinson:mat}.) 
Then the inclusion $\spl{2}{F}\to\gl{2}{F}$ induces a map of spectral sequences $E^r_{p,q}\to E(GL)^r_{p,q}$.  
Now $E(GL)^3_{0,3}=\zhalf{\pb{F}}$ and $E(GL)^3_{2,0}=\ho{2}{\tilde{T}}{\Z}/(w_2-1)$, where $\tilde{T}\subset\gl{2}{F}$ is the 
subgroup consisting of all diagonal matrices. There is a split-exact sequence (\cite{hutchinson:mat}, Lemma 4)  
\begin{eqnarray*}
\xymatrix{
0\ar[r]
&\asym{2}{\Z}{F^\times}\ar[r]
& \ho{2}{\tilde{T}}{\zhalf{\Z}}/(w_2-1)\ar[r]^-{\mathrm{det}}
&\ho{2}{F^\times}{\zhalf{\Z}}\ar[r]
&0  
}
\end{eqnarray*}

Now the image of $d^3:E(GL)^3_{0,3}=\zhalf{\pb{F}}\to E(GL)^3_{2,0}$ factors through the term $\asym{2}{\Z}{F^\times}$ and is given by the formula
\[
\zhalf{\pb{F}}\to\asym{2}{\Z}{F^\times},\qquad \gpb{x}\mapsto 
(1-x)\otimes x.
\]
(See \cite{hutchinson:mat}, p190, and allow for the fact that the term 
$[x]\in\pb{F}$ in this paper corresponds to $\gpb{1/x}$ there.) 

We observe that, for any field $F$ there is a natural injective homomorphism
\[
\frac{F^\times\wedge F^\times}{F^\times\wedge \mu_2} =\sextpow{2}{\Z}{F^\times}
\to \asym{2}{\Z}{F^\times}, a\wedge b\mapsto 
2(a\asymm b).
\]

It is easily seen that the inclusion $T\to \tilde{T}$ induces the map 
\begin{eqnarray*}
&\extpow{2}{F^\times}=\ho{2}{F^\times}{\zhalf{\Z}}\cong\ho{2}{T}{\zhalf{\Z}}
\to \asym{2}{\Z}{F^\times}
\subset   \ho{2}{\tilde{T}}{\Z}/(w_2-1)\\
&a\wedge b\mapsto 2(a\otimes b).
\end{eqnarray*}
and thus induces the injection
 $\sextpow{2}{\Z}{F^\times}\to \asym{2}{\Z}{F^\times}$.

Putting all of this together we get the commutative diagram
\begin{eqnarray*}
\xymatrix{
\rpbker{F}\ar[r]^-{d^3}\ar[d]\ar[dr]^-{\lambda_2}
&
\sextpow{2}{\Z}{F^\times}\ar@{^{(}->}[d]\\
\pb{F}\ar[r]^-{\lambda}
&
\asym{2}{\Z}{F^\times}
}
\end{eqnarray*}
from which it follows that 
\[
E^\infty_{0,3}=E^4_{0,3}=\ker{d^3}=
\ker{\lambda_2:\rpbker{F}\to \asym{2}{\Z}{F^\times}}=\rbl{F}. 
\]


Finally, we will show that 
$2\cdot E^\infty_{2,1}=2\cdot E^4_{2,1}=0$.  
In order to this we will need a \emph{technical lemma}:

Let $G$ be a group and $L_\bullet$ a complex of $\Z[G]$-modules concentrated in non-negative dimensions. Let $L_\bullet(m)$ denote the 
truncated complex
\[
L_k(m):=\left\{
\begin{array}{ll}
L_k,&k\geq m\\
0,&k<m\\
\end{array}
\right.
\] 
(So $L_\bullet=L_\bullet(0)$.) 

Consider the spectral sequences 
\[
E^1(m)_{p,q}=\ho{p}{G}{L_q(m)}\Longrightarrow\ho{p+q}{G}{L_\bullet(m)}.
\]
If $m'\geq m$, the natural map of complexes $L(m')\to L(m)$ induces a map of spectral sequences 
\[
E^r(m')_{p,q}\to E^r(m)_{p,q}
\]
compatible with the map on abutments $\ho{p+q}{G}{L_\bullet(m')}\to\ho{p+q}{G}{L_\bullet(m)}$.

Note that $E^r(m)_{p,q}=0$ for $q<m$ and thus $E^r(m)_{0,q}=E^\infty(m)_{0,q}$ for $r>q-m$. Similarly, the if $m'\geq m$ 
then $E^r(m')_{p,q}=E^r(m)_{p,q}$ as long as $r\leq q-m'+1$.  

If $A$ is a $\Z[G]$-module, we let $A[m]$ denote the module $A$ considered as a complex concentrated in dimension $m$. Observe, in 
particular, that the differential $d:L_{m+1}\to L_m$ induces a map of complexes $\phi:L_\bullet(m+1)\to L_m[m+1]$. Our technical lemma then 
states:

\begin{lem}\label{lem:tech}
The following diagram commutes for any $r\geq 1$, $m\geq 0$
\begin{eqnarray*}
\xymatrix{
\ho{r+m}{G}{L_\bullet(m+1)}\ar[r]^-{\phi}\ar@{>>}[d]
&\ho{r+m}{G}{L_m[m+1]}\ar[d]^-{=}\\
E^\infty(m+1)_{0,r+m}\ar[d]^-{=}
&\ho{r-1}{G}{L_m}\ar[d]^-{=}\\
E^r(m+1)_{0,r+m}\ar[d]^-{=}
&E^1(m)_{r-1,m}\ar@{>>}[d]\\
E^r(m)_{0,r+m}\ar[r]^{d^r}
&E^r(m)_{r-1,m}
}
\end{eqnarray*}
\end{lem}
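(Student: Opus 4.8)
The plan is to prove the lemma by an explicit computation with representatives in the double complex, using the standard \emph{staircase} description of the higher differentials in the second (row) spectral sequence. I would fix notation as follows: write $D(m+1)_{p,q}=B_p\otimes_{\igr{G}}L_q(m+1)$ and $D(m)_{p,q}=B_p\otimes_{\igr{G}}L_q(m)$, with horizontal differential $\partial$ (coming from the resolution $B_\bullet$) and vertical differential $d$ (coming from the differential of $L_\bullet$). The spectral sequence of the lemma is obtained by filtering by the row index $q$, so that $E^1(m+1)_{p,q}=\ho{p}{G}{L_q(m+1)}$ and $d^r$ has bidegree $(r-1,-r)$, as recorded in the text. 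The two double complexes agree in every row $q\ge m+1$ and differ only in that the row $q=m$ is present for $D(m)$ but zero for $D(m+1)$.

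Next I would unwind both composites in the diagram on a class $\xi\in\ho{r+m}{G}{L_\bullet(m+1)}$. Represent $\xi$ by a total cycle $Z=z_0+z_1+\cdots$ in degree $r+m$. Because $L_\bullet(m+1)$ vanishes in rows below $m+1$, the lowest available row is $m+1$, so in fact $Z=z_0+\cdots+z_{r-1}$ with $z_i\in D(m+1)_{i,\,r+m-i}$ and $z_{r-1}\in D(m+1)_{r-1,\,m+1}$. The condition that $Z$ be a total cycle (i.e. $(\partial\pm d)Z=0$) unwinds to
\[
\partial z_0=0,\qquad dz_i=-\partial z_{i+1}\ \ (0\le i\le r-2),\qquad dz_{r-1}=0,
\]
where the last relation holds in $D(m+1)$ only because $D(m+1)_{r-1,m}=0$.

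Now I would read off each composite. The left edge map sends $\xi$ to the class of its top-row component $z_0$ in $E^\infty(m+1)_{0,r+m}=E^r(m+1)_{0,r+m}=E^r(m)_{0,r+m}$, the displayed equalities being exactly the truncation-stability statements noted before the lemma. The data $z_0,\ldots,z_{r-1}$ is precisely a zig-zag witnessing that $[z_0]$ survives to page $r$, so the standard formula gives $d^r[z_0]=[dz_{r-1}]$, now computed in $E^r(m)_{r-1,m}$ — the point being that $dz_{r-1}\in D(m)_{r-1,m}$ is a genuine (nonzero-target) element there, and is a $\partial$-cycle since $\partial d z_{r-1}=\mp d\partial z_{r-1}=\pm d\,d z_{r-2}=0$. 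For the other composite, the chain map $\phi$ is the vertical differential $L_{m+1}\to L_m$ placed in degree $m+1$, so on double complexes it only sees the row-$(m+1)$ component $z_{r-1}$ of $Z$; hence $\phi_*\xi$ is represented by $dz_{r-1}\in B_{r-1}\otimes_{\igr{G}}L_m$. Under the identification $\ho{r+m}{G}{L_m[m+1]}=\ho{r-1}{G}{L_m}=E^1(m)_{r-1,m}$ this is the class $[dz_{r-1}]$, and applying the edge quotient $E^1(m)_{r-1,m}\twoheadrightarrow E^r(m)_{r-1,m}$ yields the same element $[dz_{r-1}]$ produced by $d^r$. Since every class in $E^r(m)_{0,r+m}$ lifts through the surjective edge map to such a $Z$, this proves commutativity.

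The main obstacle will be bookkeeping rather than conceptual: one must pin down the sign and filtration conventions for this transposed spectral sequence so that the total-cycle relations coming from $(\partial\pm d)Z=0$ match \emph{exactly} the zig-zag defining $d^r$ (in particular chasing the signs through so that the asserted equality, not merely equality up to sign, holds), and one must check that the three vertical identifications in the right-hand column of the diagram are the tautological ones. The crucial structural observation — and the reason the truncation is introduced at all — is that passing from $L_\bullet(m+1)$ to $L_\bullet(m)$ is precisely what converts the term $dz_{r-1}$, forced to vanish in $D(m+1)$, into the honest target of the differential $d^r$.
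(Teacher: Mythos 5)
Your argument is correct and is precisely the ``tedious but straightforward verification from the definitions'' that the paper declines to write out: you represent a class by a total cycle $Z=z_0+\cdots+z_{r-1}$, observe that the truncation at $m+1$ forces the zig-zag to stop at row $m+1$ (with $dz_{r-1}=0$ holding only because the target row is suppressed), and identify both composites with the class of $dz_{r-1}$ via the standard staircase description of $d^r$. The one loose end you flag --- pinning down sign conventions so that the equality holds on the nose rather than up to sign --- is exactly the tedium the paper waves away and does not affect the structure of the argument.
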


\begin{proof} This is a tedious but straightforward verification from the definitions (it is clearly enough to consider
the case $m=0$).
\end{proof}

Applying this to the group $G=\spl{2}{F}$ and the 
complex $L_\bullet=L_\bullet(\projl{F})$ in the case $r=3$ and $m=1$ gives 
a commutative diagram
\begin{eqnarray*}
\xymatrix{
\ho{4}{\spl{2}{F}}{L_\bullet(2)}\ar[r]^-{\phi}\ar[d]^-{=}
&\ho{4}{\spl{2}{F}}{L_1[2]}\ar[d]^-{=}\\
E^3(1)_{0,4}\ar[d]^-{=}
&\ho{2}{\spl{2}{F}}{L_1}\ar[d]^-{=}\\
E^3_{0,4}\ar[r]^-{d^3}
&E^3_{2,1}
}
\end{eqnarray*}

Now let $W_k=\ker{L_k\to L_{k-1}}$, so that (in sufficiently low dimensions) we have short exact sequences 
\[
0\to W_k\to L_k\to W_{k-1}\to 0
\]
by Lemma \ref{lem:acyclic}.  The map $d: L_2 \to W_1$ induces a map of complexes $L_\bullet(2)\to W_1[2]$ which is a weak 
equivalence in low dimensions. In particular, it induces an isomorphism 
\[
\ho{4}{\spl{2}{F}}{L_\bullet(2)}\cong \ho{4}{\spl{2}{F}}{W_1[2]}=\ho{2}{\spl{2}{F}}{W_1}.
\]

Putting these facts together gives us:

\begin{cor}
There is a commutative diagram
\begin{eqnarray*}
\xymatrix{
\ho{2}{\spl{2}{F}}{W_1}
\ar[r]\ar[d]^-{\cong}
&\ho{2}{\spl{2}{F}}{L_1}
\ar[d]^-{\cong}\\
E^3_{0,4}\ar[r]^-{d^3}
& E^3_{2,1}
}
\end{eqnarray*}
where the top horizontal map is induced by the inclusion $W_1\to L_1$.
\end{cor}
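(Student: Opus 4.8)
The plan is to obtain the square by pasting together the diagram furnished by Lemma \ref{lem:tech} (in the case $r=3$, $m=1$) with the weak equivalence $\psi:L_\bullet(2)\to W_1[2]$ recorded above. Lemma \ref{lem:tech} already identifies $d^3:E^3_{0,4}\to E^3_{2,1}$ with the map
\[
\phi_*:\ho{4}{\spl{2}{F}}{L_\bullet(2)}\to\ho{4}{\spl{2}{F}}{L_1[2]}=\ho{2}{\spl{2}{F}}{L_1}
\]
induced by the truncation map of complexes $\phi:L_\bullet(2)\to L_1[2]$, whose only nonzero component is the boundary $d:L_2\to L_1$ in degree $2$. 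So the entire content is to recognise $\phi_*$, after the identification $\ho{4}{\spl{2}{F}}{L_\bullet(2)}\cong\ho{2}{\spl{2}{F}}{W_1}$, as the homomorphism induced by the inclusion $W_1\hookrightarrow L_1$.

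First I would factor $\phi$ at the level of chain maps. Because $d^2=0$, the boundary $d:L_2\to L_1$ lands in $W_1=\ker{L_1\to L_0}$, and by the acyclicity of Lemma \ref{lem:acyclic} in the relevant low degrees the induced map $L_2\to W_1$ is surjective; this surjection in degree $2$ is precisely the weak equivalence $\psi:L_\bullet(2)\to W_1[2]$ used above. Writing $\iota:W_1\hookrightarrow L_1$ for the inclusion, the equality $d=\iota\circ(L_2\twoheadrightarrow W_1)$ of maps $L_2\to L_1$ says exactly that $\phi=\iota[2]\circ\psi$ as maps of complexes. Applying $\ho{4}{\spl{2}{F}}{-}$ and invoking that $\psi_*$ is an isomorphism then yields $\phi_*=\iota_*\circ\psi_*$, so that under the identification $\psi_*$ the map $\phi_*$ becomes the inclusion-induced homomorphism $\iota_*:\ho{2}{\spl{2}{F}}{W_1}\to\ho{2}{\spl{2}{F}}{L_1}$, which is the top arrow of the stated square.

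Pasting this onto the Lemma \ref{lem:tech} diagram produces the corollary. The left vertical isomorphism is the composite $\ho{2}{\spl{2}{F}}{W_1}\xrightarrow{\psi_*^{-1}}\ho{4}{\spl{2}{F}}{L_\bullet(2)}\cong E^3_{0,4}$, using the identifications already displayed above; the right vertical is the identification $\ho{2}{\spl{2}{F}}{L_1}=E^3_{2,1}$ of that same diagram; and the bottom row is $d^3$. Commutativity is then immediate from $\phi_*=\iota_*\circ\psi_*$ together with the commutativity of the Lemma \ref{lem:tech} square. The only step needing genuine care — and the one I would check most scrupulously — is that the degree-$2$ component of the abstractly defined truncation map $\phi$ coincides on the nose with $\iota\circ\psi$, so that the triangle $\phi=\iota[2]\circ\psi$ commutes \emph{before} passing to homology; granting that, the remainder is a formal diagram chase through isomorphisms.
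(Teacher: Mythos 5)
Your proposal is correct and follows the paper's own route exactly: the paper likewise applies Lemma \ref{lem:tech} with $r=3$, $m=1$ and combines it with the weak equivalence $L_\bullet(2)\to W_1[2]$ induced by $d:L_2\to W_1$, the key point in both cases being that the truncation map factors through the inclusion $W_1\hookrightarrow L_1$. Your write-up merely makes explicit the factorisation $\phi=\iota[2]\circ\psi$ that the paper leaves as "putting these facts together."
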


Thus from this corollary and the long exact homology sequence of the exact sequence 
\[
0\to W_1\to L_1\to W_0\to 0
\]
it follows that the image of $d^3:E^3_{0,4}\to E^3_{2,1}=\ho{2}{T}{\zhalf{\Z}}$ is equal to the kernel of 
the map, $\mu$ say, 
\[
\extpow{2}{F^\times}\cong\ho{2}{T}{\zhalf{\Z}}=\ho{2}{\spl{2}{F}}{L_1}
\to\ho{2}{\spl{2}{F}}{W_0}.
\]

We will show that the map $2\cdot \mu$ is zero:

Let $B_\bullet \to \Z$ be a projective resolution of $\Z$ as a $\Z[\spl{2}{F}]$-module. So $\ho{\bullet}{T}{\zhalf{\Z}}$ 
is the homology of the complex $B_\bullet\otimes_{\Z[T]}\zhalf{\Z}$, and the composite
\[
\ho{\bullet}{T}{\zhalf{\Z}}\to\ho{\bullet}{\spl{2}{F}}{L_1}\to \ho{\bullet}{\spl{2}{F}}{W_0}
\]
is described on the level of chains 
\[
B_\bullet\otimes_{\Z[T]}\zhalf{\Z}\to B_\bullet\otimes_{\Z[\spl{2}{F}]}L_1\to B_\bullet\otimes_{\Z[\spl{2}{F}]}W_0
\]
by
\[
\gamma\otimes 1\mapsto \gamma\otimes (\infty,0)\mapsto \gamma\otimes((0)-(\infty)).
\]

Recall that 
\[
w=\matr{0}{-1}{1}{0}
\]
acts on $T$ by conjugation, and the action of $w$ on the homology of $T$ is described on the level of chains by
\[
\gamma\otimes 1\mapsto \gamma\cdot w^{-1}\otimes 1. 
\]

Thus, if $z\in\ho{2}{T}{\zhalf{\Z}}$ is represented by $\gamma\otimes 1$, then $w_2\cdot z$ is represented by 
$\gamma\cdot w^{-1}\otimes 1$. Thus
\begin{eqnarray*}
&\mu(w_2\cdot z)=\mu(\gamma\cdot w^{-1}\otimes 1)=\gamma\cdot w^{-1}\otimes ((0)-(\infty))\\
&=\gamma\otimes w^{-1}\cdot((0)-(\infty))=\gamma\otimes ((\infty)-(0))=-\mu(z).
\end{eqnarray*}

Since, as observed above, $w_2$ is the identity map, we have $2\mu(z)=0$ 
as required. It follows that $2\cdot E^\infty_{2,1}=0$.

\subsection{The calculation of $\ho{3}{\spl{2}{F}}{\Z}$}

Now the map 
$\Tor{\mu_F}{\mu_F}\to\ho{3}{\spl{2}{F}}{\Z}$ is injective, since, 
for example, the composite 
\[
\Tor{\mu_F}{\mu_F}\to\ho{3}{\spl{2}{F}}{\Z}\to\ho{3}{\gl{2}{F}}{\Z}
\]
is injective when $F$ is infinite by the results of Suslin 
(\cite{sus:bloch}), while for finite fields we have shown that the map 
$\Tor{\mu_F}{\mu_F}=\ho{3}{T}{\Z}\to\ho{3}{\spl{2}{F}}{\Z}$ is injective in 
section \ref{sec:finite} above. 
It thus follows
that 
\[
E^\infty_{3,0}=\Tor{\mu_F}{\mu_F}\oplus A_3(F)=
\left\{
\begin{array}{ll}
\Tor{\mu_F}{\mu_F},& F\mbox{ infinite}\\
\ho{3}{B}{\Z},& F\mbox{ finite}
\end{array}
\right.
\]

Now, by the computations above, $2\cdot E^\infty_{1,2}=2\cdot E^\infty_{2,1}$ for 
any field $F$. Furthermore, clearly $E^\infty_{2,1}=0$ for any finite field $F$ 
since $E^1_{1,2}\cong F^\times\wedge F^\times=0$ in this case. We have also seen 
that  $E^\infty_{1,2}$ has order at most $2$ for any finite field and that this 
term is already $0$ for finite fields of characteristic $2$.  

Thus the convergence of the spectral sequence gives us a complex 
\[
0\to E^\infty_{3,0}\to\ho{3}{\spl{2}{F}}{\zhalf{\Z}}\to E^\infty_{0,3}\to 0.
\]
which is exact except possible at the middle term. If we denote the middle 
 homology group 
by $H(F)$, then it admits  a short exact sequence 
\[
0\to E^\infty_{1,2}\to H(F)\to E^\infty_{2,1}\to 0.
\]

This completes the proof of Theorem \ref{thm:main}.

\section{The refined Bloch group, the classical Bloch group and indecomposable $K_3$}

Recall that for any field $F$ there is a natural homomorphism $\ho{3}{\spl{2}{F}}{\Z}\to\kind{F}$ which 
factors as follows:
\[
\xymatrix{
\ho{3}{\spl{2}{F}}{\Z}\ar[r]
&\ho{3}{\spl{}{F}}{\Z}
&K_3(F)/(\{ -1\}\cdot K_2(F))\ar[l]^-{\cong}\ar@{>>}[r]
&
\kind{F}.
}
\]

Now, for any infinite field $F$ this map is surjective (see \cite{hutchinson:tao2}), and the induced homomorphism
\[
\xymatrix{\ho{3}{\spl{2}{F}}{\Z}_{F^\times}\ar@{>>}[r]&\kind{F}}
\] 
has a $2$-primary torsion kernel (see Mirzaii \cite{mirzaii:third}). 

Suslin, \cite{sus:bloch}, has shown that for any infinite field $F$ there is a natural short exact sequence
\[
0\to\covtor{F}\to\kind{F}\to\bl{F}\to 0
\]
where $\covtor{F}$ denotes the unique nontrivial extension of $\Tor{\mu_F}{\mu_F}$ by $\Z/2$ if the characteristic 
of $F$ is not $2$, and denotes  $\Tor{\mu_F}{\mu_F}$ in characteristic $2$. (We will show that this result extends 
to finite fields in section \ref{sec:blochfinite} below.)

\begin{cor}\label{cor:blochinf}
Let $F$ be an infinite field. Then the natural map $\rbl{F}\to\bl{F}$ is surjective and the induced map 
$\rbl{F}_{F^\times}\to \bl{F}$ has a $2$-primary torsion kernel.  
\end{cor}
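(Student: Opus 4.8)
The plan is to read off both assertions from a comparison between the refined Bloch--Wigner sequence of Theorem~\ref{thm:main} and Suslin's sequence $0\to\covtor{F}\to\kind{F}\to\bl{F}\to 0$, the comparison being induced by $F^\times$-coinvariants together with the natural map $\ho{3}{\spl{2}{F}}{\Z}\to\kind{F}$. Write $\rho\colon\rbl{F}\to\bl{F}$ for the natural map and $\bar\rho\colon\rbl{F}_{F^\times}\to\bl{F}$ for its factorisation through coinvariants (legitimate, since $\bl{F}\subseteq\pb{F}$ carries the trivial $\sgr{F}$-action). As $\rbl{F}\twoheadrightarrow\rbl{F}_{F^\times}$ is onto, surjectivity of $\rho$ is equivalent to surjectivity of $\bar\rho$, and the kernel named in the statement is $\ker{\bar\rho}$; so both claims concern the single map $\bar\rho$.

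For surjectivity I would proceed as follows. Exactness of the complex in Theorem~\ref{thm:main} at the term $\rbl{F}$ yields a natural surjection $\psi\colon\ho{3}{\spl{2}{F}}{\Z}\twoheadrightarrow\rbl{F}$. On the other hand the composite $\ho{3}{\spl{2}{F}}{\Z}\to\kind{F}\twoheadrightarrow\bl{F}$ is surjective, the first map being onto for infinite $F$ by \cite{hutchinson:tao2} and the second being Suslin's cokernel map. The key point is that $\rho\circ\psi$ agrees with this $K$-theoretic composite as maps into $\bl{F}\subseteq\pb{F}$; this is precisely the compatibility of the $\spl{2}{F}$-spectral sequence with the $\gl{2}{F}$-spectral sequence exploited on the $E^4$-page (the square relating $d^3$ to the classical $\lambda$ through the projection $\rpb{F}\to\pb{F}$). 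Granting it, $\image{\rho}\supseteq\image{\rho\circ\psi}=\bl{F}$, so $\rho$ is onto.

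For the kernel I would invert $2$. After $\otimes\Z[1/2]$ the middle homology of the complex of Theorem~\ref{thm:main}, being annihilated by $4$, vanishes, giving a short exact sequence of $\sgr{F}$-modules
\[
0\to\Tor{\mu_F}{\mu_F}\otimes\Z[1/2]\to\ho{3}{\spl{2}{F}}{\Z}\otimes\Z[1/2]\to\rbl{F}\otimes\Z[1/2]\to 0,
\]
on whose left-hand term $F^\times$ acts trivially, this term being a summand of $\ho{3}{T}{\Z}$ on which conjugation by the diagonal splitting acts trivially. Taking $F^\times$-coinvariants, and using that after inverting $2$ the surjection $\ho{3}{\spl{2}{F}}{\Z}_{F^\times}\twoheadrightarrow\kind{F}$ becomes an isomorphism (its kernel being $2$-primary by Mirzaii \cite{mirzaii:third}) together with the localisation $0\to\Tor{\mu_F}{\mu_F}\otimes\Z[1/2]\to\kind{F}\otimes\Z[1/2]\to\bl{F}\otimes\Z[1/2]\to 0$ of Suslin's sequence (note $\covtor{F}\otimes\Z[1/2]\cong\Tor{\mu_F}{\mu_F}\otimes\Z[1/2]$), I would compare the two rows. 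Compatibility of the two copies of $\Tor{\mu_F}{\mu_F}\otimes\Z[1/2]$ forces the connecting map out of $\ho{1}{F^\times}{\rbl{F}\otimes\Z[1/2]}$ to vanish, so the coinvariants row is short exact; the five lemma then gives $\rbl{F}_{F^\times}\otimes\Z[1/2]\cong\bl{F}\otimes\Z[1/2]$. Hence $\ker{\bar\rho}$ dies after inverting $2$, i.e. it is $2$-primary torsion.

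The substantive difficulty in both halves is the same: establishing that the natural transformation from the refined sequence to the classical one (via $F^\times$-coinvariants) is compatible with the maps to $K$-theory, and in particular that the two embeddings of $\Tor{\mu_F}{\mu_F}$ match under the isomorphism $\ho{3}{\spl{2}{F}}{\Z}_{F^\times}\otimes\Z[1/2]\cong\kind{F}\otimes\Z[1/2]$. I expect this to follow from the map of spectral sequences $E^r_{p,q}\to E(GL)^r_{p,q}$ already in play, but carefully tracking the identifications $E^\infty_{0,3}=\rbl{F}$ and its $\gl{2}{F}$-analogue $\pb{F}$ through the edge homomorphisms is where the real care will be needed.
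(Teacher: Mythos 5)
Your proof is correct and follows essentially the same route as the paper: both compare the sequence of Theorem \ref{thm:main} with Suslin's sequence via the commutative ladder induced by $\ho{3}{\spl{2}{F}}{\Z}\to\kind{F}$, read off surjectivity of $\rbl{F}\to\bl{F}$ from the right-hand square, and obtain the $2$-primary kernel by passing to $F^\times$-coinvariants and invoking Mirzaii's result that $\ho{3}{\spl{2}{F}}{\Z}_{F^\times}\to\kind{F}$ has $2$-primary kernel. The only (cosmetic) difference is that you invert $2$ before taking coinvariants and finish with the five lemma, whereas the paper runs an integral diagram chase with $K=\ker{\ho{3}{\spl{2}{F}}{\Z}\to\rbl{F}}$ and the observation that $K\to\covtor{F}$ has cokernel annihilated by $4$; the compatibility of the two embeddings of $\Tor{\mu_F}{\mu_F}$, which you rightly flag as the substantive point, is likewise left to naturality of the $\mathrm{SL}_2$-versus-$\mathrm{GL}_2$ comparison in the paper.
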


\begin{proof} 
 Combining the preceding remarks  with Theorem \ref{thm:main} gives the commutative diagram (defining $K$)
\begin{eqnarray*}
\xymatrix{
0\ar[r]
& 
K\ar[d]\ar[r]
&
\ho{3}{\spl{2}{F}}{\Z}\ar[d]\ar[r]
&
\rbl{F}\ar[r]\ar[d]
&
0\\
0\ar[r]
& 
\covtor{F}\ar[r]
&
\kind{F}\ar[r]
&
\bl{F}\ar[r]
&
0
}
\end{eqnarray*}
from which the first statement follows. Taking $F^\times$-coinvariants of the top row and noting that the 
natural map $K\to\covtor{F}$ has cokernel annihilated by $4$, the second statement also follows. 
\end{proof}

Now for any field $F$ let 
\begin{eqnarray*}
\ho{3}{\spl{2}{F}}{\Z}_0:=\ker{\ho{3}{\spl{2}{F}}{\Z}\to\kind{F}}
\end{eqnarray*}
and
\begin{eqnarray*}
\rbl{F}_0:=\ker{\rbl{F}\to\bl{F}}
\end{eqnarray*}

\begin{lem} \label{lem:h3sl20}
Let $F$ be an infinite field. Then
\begin{enumerate}
\item $\ho{3}{\spl{2}{F}}{\zzhalf{\Z}}_0=\zzhalf{\rbl{F}}_0$
\item $\ho{3}{\spl{2}{F}}{\zzhalf{\Z}}_0=\aug{F}\ho{3}{\spl{2}{F}}{\zzhalf{\Z}}$ and
$\zzhalf{\rbl{F}}_0=\aug{F}\zzhalf{\rbl{F}}$.
\item 
$
\ho{3}{\spl{2}{F}}{\zzhalf{\Z}}_0 = \ker{\ho{3}{\spl{2}{F}}{\zzhalf{\Z}}\to \ho{3}{\spl{3}{F}}{\zzhalf{\Z}}}\\
= \ker{\ho{3}{\spl{2}{F}}{\zzhalf{\Z}}\to \ho{3}{\gl{2}{F}}{\zzhalf{\Z}}}
$
\end{enumerate}
\end{lem}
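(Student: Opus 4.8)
The plan is to derive all three parts from the two short exact sequences available after inverting $2$. Tensoring the complex of Theorem~\ref{thm:main}(1) with $\Z[1/2]$ annihilates its $4$-torsion middle homology, turning it into a genuine short exact sequence of $\sgr{F}$-modules
\[
0\to\zzhalf{\Tor{\mu_F}{\mu_F}}\to\ho{3}{\spl{2}{F}}{\zzhalf{\Z}}\xrightarrow{q}\zzhalf{\rbl{F}}\to 0 ,
\]
while Suslin's sequence $0\to\covtor{F}\to\kind{F}\to\bl{F}\to 0$ (\cite{sus:bloch}), using that $\covtor{F}$ and $\Tor{\mu_F}{\mu_F}$ agree after inverting $2$, gives
\[
0\to\zzhalf{\Tor{\mu_F}{\mu_F}}\to\zzhalf{\kind{F}}\to\zzhalf{\bl{F}}\to 0 .
\]
Corollary~\ref{cor:blochinf}, tensored with $\Z[1/2]$, provides a morphism from the first sequence to the second which is the identity on the left-hand terms. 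For (1) I would then invoke the snake lemma: as the left vertical map is an isomorphism, $q$ restricts to an isomorphism from $\ker{\ho{3}{\spl{2}{F}}{\zzhalf{\Z}}\to\zzhalf{\kind{F}}}$ onto $\ker{\zzhalf{\rbl{F}}\to\zzhalf{\bl{F}}}$; that is, $\ho{3}{\spl{2}{F}}{\zzhalf{\Z}}_0=\zzhalf{\rbl{F}}_0$.

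For (2), the crucial observation is that the map $\ho{3}{\spl{2}{F}}{\Z}\to\kind{F}$ factors through the stable group $\ho{3}{\spl{}{F}}{\Z}$ and is therefore $F^\times$-equivariant for the trivial action on $\kind{F}$; hence it factors through the coinvariants $\ho{3}{\spl{2}{F}}{\Z}_{F^\times}$. The induced map is surjective by \cite{hutchinson:tao2} and has $2$-primary torsion kernel by Mirzaii \cite{mirzaii:third}, so after inverting $2$ it becomes an isomorphism $\ho{3}{\spl{2}{F}}{\zzhalf{\Z}}_{F^\times}\cong\zzhalf{\kind{F}}$. Since the kernel of $\ho{3}{\spl{2}{F}}{\zzhalf{\Z}}\to\ho{3}{\spl{2}{F}}{\zzhalf{\Z}}_{F^\times}$ is exactly $\aug{F}\ho{3}{\spl{2}{F}}{\zzhalf{\Z}}$, the first equality of (2) follows. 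The second is then formal: $q$ is a surjective $\sgr{F}$-homomorphism which, by (1), carries $\ho{3}{\spl{2}{F}}{\zzhalf{\Z}}_0$ isomorphically onto $\zzhalf{\rbl{F}}_0$, so
\[
\zzhalf{\rbl{F}}_0=q\bigl(\aug{F}\ho{3}{\spl{2}{F}}{\zzhalf{\Z}}\bigr)=\aug{F}\,q\bigl(\ho{3}{\spl{2}{F}}{\zzhalf{\Z}}\bigr)=\aug{F}\zzhalf{\rbl{F}} .
\]

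For (3), write $\beta$ and $\gamma$ for the maps induced by $\spl{2}{F}\to\spl{3}{F}$ and $\spl{2}{F}\to\gl{2}{F}$. I would establish the chain
\[
\aug{F}\ho{3}{\spl{2}{F}}{\zzhalf{\Z}}\subseteq\ker{\gamma}\subseteq\ker{\beta}\subseteq\ho{3}{\spl{2}{F}}{\zzhalf{\Z}}_0=\aug{F}\ho{3}{\spl{2}{F}}{\zzhalf{\Z}} ,
\]
which forces all four to coincide. The first inclusion holds because the $F^\times$-action is realized by conjugation by $\delta(a)=\mathrm{diag}(a,1)\in\gl{2}{F}$, which is an inner automorphism of $\gl{2}{F}$ and hence trivial on $\ho{3}{\gl{2}{F}}{\zzhalf{\Z}}$; thus $\gamma$ factors through the coinvariants. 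The second inclusion uses the group homomorphism $j:\gl{2}{F}\to\spl{3}{F}$, $g\mapsto\mathrm{diag}(g,\det(g)^{-1})$, whose restriction to $\spl{2}{F}$ is the standard stabilization; hence $\beta=j_*\circ\gamma$ and $\ker{\gamma}\subseteq\ker{\beta}$. The third inclusion holds because the defining map to $\kind{F}$ factors through $\ho{3}{\spl{2}{F}}{\zzhalf{\Z}}\xrightarrow{\beta}\ho{3}{\spl{3}{F}}{\zzhalf{\Z}}\to\ho{3}{\spl{}{F}}{\zzhalf{\Z}}\to\zzhalf{\kind{F}}$, so $\ker{\beta}\subseteq\ho{3}{\spl{2}{F}}{\zzhalf{\Z}}_0$; the final equality is part (2).

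The main obstacle is the coinvariant identification in (2). The remaining steps are a formal diagram chase and two explicit group homomorphisms, but the isomorphism $\ho{3}{\spl{2}{F}}{\zzhalf{\Z}}_{F^\times}\cong\zzhalf{\kind{F}}$ rests on the deep external inputs of \cite{hutchinson:tao2} (surjectivity) and Mirzaii \cite{mirzaii:third} (the kernel is $2$-primary), the latter being precisely what permits discarding the discrepancy after inverting $2$. The one genuinely clever---if elementary---device is the homomorphism $g\mapsto\mathrm{diag}(g,\det(g)^{-1})$ routing $\gl{2}{F}$ through $\spl{3}{F}$, which links the two kernels in (3).
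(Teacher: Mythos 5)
Your proof is correct. For parts (1) and (2) it follows essentially the paper's own route: the snake lemma applied to the map of short exact sequences obtained by inverting $2$ in Theorem \ref{thm:main} and in Suslin's sequence (via the diagram of Corollary \ref{cor:blochinf}), together with the identification $\ho{3}{\spl{2}{F}}{\zzhalf{\Z}}_{F^\times}\cong\zzhalf{\kind{F}}$ coming from \cite{hutchinson:tao2} and \cite{mirzaii:third}; your derivation of the second equality of (2) by pushing $\aug{F}\ho{3}{\spl{2}{F}}{\zzhalf{\Z}}$ forward along the surjection $q$ is only a harmless variant of the paper's direct appeal to $\zzhalf{\rbl{F}}_{F^\times}=\zzhalf{\bl{F}}$. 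The genuine divergence is in part (3). To compare $\ker{\beta}$ and $\ker{\gamma}$ the paper cites two further injectivity statements --- that $\ho{3}{\gl{2}{F}}{\zzhalf{\Z}}\to\ho{3}{\gl{3}{F}}{\zzhalf{\Z}}$ is injective (Mirzaii) and that $\ho{3}{\spl{3}{F}}{\Z}\to\ho{3}{\gl{3}{F}}{\Z}$ is injective (stability) --- and chases the resulting commutative square. You instead sandwich both kernels between $\aug{F}\ho{3}{\spl{2}{F}}{\zzhalf{\Z}}$ and $\ho{3}{\spl{2}{F}}{\zzhalf{\Z}}_0$, which coincide by part (2), using the homomorphism $j:\gl{2}{F}\to\spl{3}{F}$, $g\mapsto\mathrm{diag}(g,\det(g)^{-1})$, to factor $\beta$ as $j_*\circ\gamma$, and the innerness of conjugation by $\delta(a)$ in $\gl{2}{F}$ to see that $\gamma$ kills $\aug{F}$-multiples. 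This is more elementary and self-contained, trading two nontrivial external inputs for one explicit group homomorphism plus the already-established part (2); the paper's version, in exchange, records the stronger injectivity facts along the way. Both arguments are sound.
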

\begin{proof}
\begin{enumerate}
\item This follows from applying $-\otimes\Z[1/2]$ to the diagram in the proof of Corollary \ref{cor:blochinf} and 
noting that $\zzhalf{K}=\zzhalf{\Tor{\mu_F}{\mu_F}}=\zzhalf{\covtor{F}}$. 
\item By Corollary \ref{cor:blochinf} again,  we have $\zzhalf{\rbl{F}}_{F^\times}=\zzhalf{\bl{F}}$ and by the result 
of Mirzaii mentioned above we have $\ho{3}{\spl{2}{F}}{\zzhalf{\Z}}_{F^\times}=\zzhalf{\kind{F}}$.

 Of course, 
for any $F^\times$-module $M$, we have $\aug{F}\cdot M = \ker{M\to M_{F^\times}}$.
\item
For the first equality, observe first that the stabilization map 
\[
\ho{3}{\spl{2}{F}}{\Z}\to \ho{3}{\spl{3}{F}}{\Z}
\]
factors through $\ho{3}{\spl{2}{F}}{\Z}_{F^\times}$, since, for example, $(F^\times)^2$ acts trivially on 
$\ho{3}{\spl{2}{F}}{\Z}$ while $(F^\times)^3$ acts trivially on 
$\ho{3}{\spl{3}{F}}{\Z}$. From the isomorphism $\ho{3}{\spl{2}{F}}{\zzhalf{\Z}}_{F^\times}\cong\zzhalf{\kind{F}}$ 
it thus follows that
\[
\ker{\ho{3}{\spl{2}{F}}{\zzhalf{\Z}}\to \ho{3}{\spl{3}{F}}{\zzhalf{\Z}}}\subset \ho{3}{\spl{2}{F}}{\zzhalf{\Z}}_0.
\] 

On the other hand, the natural map $\ho{3}{\spl{2}{F}}{\Z}\to \kind{F}$ factors through\\
 $\ho{3}{\spl{3}{F}}{\Z}$, giving us the reverse inclusion.
 
Furthermore, by \cite{mirzaii:third}, the map 
\[
\ho{3}{\gl{2}{F}}{\zzhalf{\Z}}\to \ho{3}{\gl{3}{F}}{\zzhalf{\Z}}=\ho{3}{\gl{}{F}}{\zzhalf{\Z}}
\]
is injective, while the map $\ho{3}{\spl{3}{F}}{\Z}\to\ho{3}{\gl{3}{F}}{\Z}$ is always injective (by the stability 
results in \cite{hutchinson:tao2}). This implies the second equality.
\end{enumerate}
\end{proof}

\begin{cor}\label{cor:char0}
Let $F$ be a field of characteristic other than $2$. Let $\bar{F}$ be an algebraic closure of $F$ and let $\tilde{F}$
be the smallest quadratically closed subfield of $\bar{F}$ containing $F$. 
Then
\[
\ho{3}{\spl{2}{F}}{\zzhalf{\Z}}_0=\ker{\ho{3}{\spl{2}{F}}{\zzhalf{\Z}}\to\ho{3}{\spl{2}{\tilde{F}}}{\zzhalf{\Z}}}.
\]
\end{cor}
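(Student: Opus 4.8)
The plan is to compare $F$ with the quadratically closed field $\tilde{F}$, over which the third homology of $\spl{2}{}$ is already identified with indecomposable $K_3$, and to exploit the fact that $\tilde{F}/F$ is assembled entirely out of quadratic steps. First I would record the structural fact that every finite subextension $E/F$ contained in $\tilde{F}$ has degree a power of $2$: by construction $\tilde{F}$ consists exactly of those $\alpha\in\bar{F}$ obtainable from $F$ by a finite chain of square-root extractions, so each $\alpha$ lies in a tower $F=K_0\subset\cdots\subset K_n$ with $[K_{i+1}:K_i]\le 2$, whence $[F(\alpha):F]$ is a power of $2$; taking composita gives the same for any finitely generated subextension. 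Since $F$ has characteristic not $2$, each such step is separable, so $\tilde{F}/F$ is a separable algebraic extension and transfer (corestriction) maps are available at every finite stage.

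Next I would prove the key injectivity statement, that
\[
\rs\colon\kind{F}\otimes\zzhalf{\Z}\longrightarrow\kind{\tilde{F}}\otimes\zzhalf{\Z}
\]
is injective. For a finite subextension $E/F$ the projection formula gives $\cores^E_F\circ\rs^F_E=[E:F]=2^m$ on $\kind{F}$, which becomes invertible after inverting $2$; hence $\rs^F_E$ is injective on $\kind{F}\otimes\zzhalf{\Z}$. Writing $\kind{\tilde{F}}=\colim_E\kind{E}$ over the finite subextensions $E$, and using that $-\otimes\zzhalf{\Z}$ commutes with filtered colimits, any class killed in $\kind{\tilde{F}}\otimes\zzhalf{\Z}$ is already killed at some finite stage, where $\rs^F_E$ is injective; so it vanishes.

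Finally I would run a diagram chase. Let $\pi_F\colon\ho{3}{\spl{2}{F}}{\zzhalf{\Z}}\to\kind{F}\otimes\zzhalf{\Z}$ be the natural map, so that $\ker{\pi_F}=\ho{3}{\spl{2}{F}}{\zzhalf{\Z}}_0$ by definition, and let $\pi_{\tilde{F}}$ be the analogous map for $\tilde{F}$. Functoriality in the inclusion $F\subset\tilde{F}$ yields the commutative square
\[
\begin{CD}
\ho{3}{\spl{2}{F}}{\zzhalf{\Z}} @>{\rs}>> \ho{3}{\spl{2}{\tilde{F}}}{\zzhalf{\Z}}\\
@V{\pi_F}VV @VV{\pi_{\tilde{F}}}V\\
\kind{F}\otimes\zzhalf{\Z} @>{\rs}>> \kind{\tilde{F}}\otimes\zzhalf{\Z}
\end{CD}
\]
Since $\tilde{F}$ is quadratically closed, $\pi_{\tilde{F}}$ is an isomorphism (this is the identification $\ho{3}{\spl{2}{\tilde{F}}}{\Z}\cong\kind{\tilde{F}}$ for quadratically closed fields recalled in the introduction; cf.\ the use of Sah's theorem in the proof of Corollary \ref{cor:k3}). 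As $\pi_{\tilde{F}}$ is injective, the kernel of the upper horizontal map coincides with the kernel of $\pi_{\tilde{F}}\circ\rs$, which by commutativity is the kernel of $\rs\circ\pi_F$; and since the lower horizontal map is injective by the previous paragraph, this is $\ker{\pi_F}=\ho{3}{\spl{2}{F}}{\zzhalf{\Z}}_0$. This is the asserted equality.

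The main obstacle is the injectivity of the bottom row: this is precisely where the hypothesis that $F$ has characteristic not $2$ (guaranteeing separability, hence the existence of transfers) and the passage to $\zzhalf{\Z}$-coefficients (to invert the $2$-power degrees) are genuinely used. The two points I would verify most carefully are that $\kind{}$ really carries transfer maps satisfying $\cores^E_F\circ\rs^F_E=[E:F]$, which one deduces from the corresponding fact for $K_3$ and for Milnor $K$-theory compatibly with the defining quotient, and that every finite subextension of $\tilde{F}/F$ indeed has $2$-power degree, as sketched in the first paragraph.
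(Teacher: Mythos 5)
Your proof is correct, and its overall skeleton is the same as the paper's: compare the commutative square relating $\ho{3}{\spl{2}{F}}{\zzhalf{\Z}}\to\kind{F}\otimes\Z[1/2]$ with the corresponding map over $\tilde{F}$, use the fact that for the quadratically closed field $\tilde{F}$ the map $\ho{3}{\spl{2}{\tilde{F}}}{\Z}\to\kind{\tilde{F}}$ is an isomorphism (Sah), and reduce everything to the injectivity of $\kind{F}\otimes\Z[1/2]\to\kind{\tilde{F}}\otimes\Z[1/2]$. Where you genuinely diverge is in how that injectivity is established. The paper cites Galois descent for $\kind{}$ under finite Galois extensions of degree prime to the characteristic (Levine, Merkurjev--Suslin), applied to the $2$-power-degree Galois subextensions of $\tilde{F}/F$; you instead use the projection formula $\cores^E_F\circ\rs^F_E=[E:F]=2^m$ together with the filtered-colimit description $\kind{\tilde{F}}=\colim_E\kind{E}$, so that inverting $2$ makes each restriction split injective. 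Your route needs less: only the existence of transfers on $\kind{}$ compatible with those on $K_3$ and $K_3^M$ (so that $\cores$ descends to the quotient), rather than the full descent theorem; on the other hand the descent theorem is exactly what the paper has on the shelf from the cited references, so its proof is a one-liner. Both arguments use the characteristic-$\neq 2$ hypothesis in the same place (separability of the quadratic tower, resp.\ coprimality of $2$-power degrees to the characteristic), and both correctly require the coefficients $\Z[1/2]$ to kill the $2$-power degrees. The two points you flag for careful verification -- compatibility of the Quillen and Milnor transfers on the defining quotient, and the $2$-power degree of every finite subextension of $\tilde{F}/F$ -- are indeed the only nontrivial inputs, and both are standard.
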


\begin{proof} This follows from the fact that the natural map 
\[
\ho{3}{\spl{2}{E}}{\Z}\to \kind{E}
\] 
is an isomorphism, when $E$ is quadratically closed (\cite{sah:discrete3}), 
together with the fact that $\kind{F}$ satsifies 
Galois descent for finite Galois extensions of degree 
relatively prime to the characteristic of the field (Levine \cite{levine:k3ind}, Merkurjev and 
Suslin \cite{sus:merkurjev}).
\end{proof}  

\begin{rem}\label{rem:k3} In \cite{hutchinson:tao2}, it is shown that, for any infinite field $F$, 
\[
\ho{3}{\spl{3}{F}}{\Z}=\ho{3}{\spl{}{F}}{\Z}=\frac{K_3(F)}{\{ -1\}\cdot K_2(F)}.
\]

Thus, it follows  that 
$\ho{3}{\spl{3}{F}}{\zzhalf{\Z}}\cong \zzhalf{K_3(F)}$ 
(since $\{ -1\}\cdot K_2(F)\subset K_3(F)$ is clearly killed by $2$).
Again, in \cite{hutchinson:tao2} it is shown that the 
cokernel of $\ho{3}{\spl{2}{F}}{\Z}\to \ho{3}{\spl{3}{F}}{\Z}$ is 
$2\cdot K_3^M(F)$, while 
the image of this map is isomorphic to $\kind{F}$. 
\end{rem}

\section{The map $\ho{3}{G}{\Z}\to \rbl{F}$ for subgroups $G$ of $\spl{2}{F}$}\label{sec:h3g}

\subsection{Preliminary Remarks}  Let $G$ be a group and let $P$ be a left $G$-set with at least $5$ elements. 
As in section \ref{sec:main}, let 
$L_\bullet$ be the complex of $\Z[G]$-modules defined by $L_n$ is the free abelian group on $(n+1)$-tuples of \emph{distinct} 
points of $P$, and let $d_n:L_n\to L_{n-1}$ be the simplicial boundary map. Thus we have a spectral sequence 
\[
E^1_{p,q}=\ho{p}{G}{L_q}\Longrightarrow \ho{p+q}{G}{L_\bullet}
\]
and $\ho{n}{G}{L_\bullet}=\ho{n}{G}{\Z}$ for $n\leq 3$. 

Thus we have edge homomorphisms 
\[
\xymatrix{\ho{n}{G}{\Z}\ar@{>>}[r]& E^\infty_{0,n}\ar@{^(->}[r]& E^2_{0,n}=H_n\left( (L_\bullet )_G\right)}
\] 

These edge homomorphisms can be constructed as follows:  Let $F_\bullet$ be a (left) projective 
resolution of $\Z$ as a $\Z[G]$-module.  Let $\beta: F_\bullet \to L_\bullet$ be an augmentation-preserving map of complexes
of $\Z[G]$-modules. Then $\beta$ is determined uniquely up to chain homotopy (see, for example, \cite{brown:coh} I.7.4). There is 
an induced map of complexes
\[
\xymatrix{\Z\otimes_{\Z[G]}F_\bullet = (F_\bullet)_G\ar[r]^-{(\beta)_G}&(L_\bullet)_G}
\]  
and, hence, on taking homology, induced maps 
\[
\ho{n}{G}{\Z}=H_n\left( (F_\bullet)_G\right)\to H_n\left( (L_\bullet )_G\right)
\]
(which are independent of the particular chain map $\beta$).

\subsection{Construction of $\beta$}

We will now let $F_\bullet=F_\bullet(G)$ be the homogeneous (left) standard resolution of $\Z$ over $\Z[G]$. 
Thus $F_n$ is the free $\Z$-module on  
$(n+1)$-tuples $(g_0,\ldots,g_n)$ of elements of $G$ and
 $d_n:F_n\to F_{n-1}$ is again the standard simplicial boundary map. $G$ 
acts diagonally on the left on $F_n$. 
So $F_n$ is a free left $\Z[G]$-module with basis consisting of the elements of the form 
$(1,g_1,\ldots,g_n)$.

Now, suppose that $x\in P$ and that 
the orbit of $x$, $G\cdot x$, is not all of $P$ 
(so that $G$ does not act transitively on $P$). Fix $y\in P\setminus G\cdot x$.
In dimensions less than or equal to $3$, we  will use $x$ and $y$ to 
construct a chain map $\beta=\beta^{x,y}:F_\bullet\to L_\bullet$. (It follows, of course, that the 
resulting maps on homology are independent of the choice of $x$ and $y$).

In dimension $0$, we set $\beta_0^{x,y}(g)=g(x)\in P$.

In dimension $1$, we define 
\[
\beta_1^{x,y}(g_0,g_1)=\left\{
\begin{array}{ll}
(g_0(x),g_1(x)),&\mbox{ if }g_0(x)\not= g_1(x)\\
0,&\mbox{ if } g_0(x)=g_1(x)
\end{array}
\right.
\]

In dimension $2$, we define 
\[
\beta_2^{x,y}(g_0,g_1,g_2)=\left\{
\begin{array}{ll}
(g_0(x),g_1(x),g_2(x)),&\mbox{ if }g_0(x),g_1(x),g_2(x)\mbox{ are distinct}\\
0,&\mbox{ if } g_i(x)=g_{i+1}(x)\mbox{ for } i\in \{ 0,1\}\\
(g_0(y),g_0(x),g_1(x))+(g_0(y),g_1(x),g_0(x)),&\mbox{ if } g_0(x)=g_2(x)\not= g_1(x)
\end{array}
\right.
\]

In dimension $3$, we define $\beta_3^{x,y}(g_0,g_1,g_2,g_3)=$ 
\[
\left\{
\begin{array}{ll}
(g_0(x),g_1(x),g_2(x),g_3(x)),&\mbox{ if }g_0(x),\ldots,g_3(x)\mbox{ are distinct}\\
0,&\mbox{ if } g_i(x)=g_{i+1}(x)\mbox{ for } i\in \{ 0,1,2\}\\
0,&\mbox{ if } g_0(x)=g_{2}(x)\mbox{ and } g_1(x)=g_3(x)\\
& \mbox{ and }g_0(y)=g_1(y)\\
(g_0(y),g_1(y),g_0(x),g_1(x))+(g_0(y),g_1(y),g_1(x),g_0(x))&\mbox{ if } g_0(x)=g_{2}(x)\mbox{ and } g_1(x)=g_3(x)\\
& \mbox{ and }g_0(y)\not=g_1(y)\\
(g_0(y),g_0(x),g_1(x),g_3(x))+(g_0(y),g_1(x),g_0(x),g_3(x)),&\mbox{ if } g_0(x)=g_2(x), \mbox{ and } \\
&g_0(x),g_1(x),g_3(x)\mbox{ are distinct}\\
(g_0(x),g_1(y),g_1(x),g_2(x))+(g_0(x),g_1(y),g_2(x),g_1(x)),&\mbox{ if } g_1(x)=g_3(x), \mbox{ and } \\
&g_0(x),g_1(x),g_2(x)\mbox{ are distinct}\\
(g_0(y),g_1(x),g_2(x),g_0(x))-(g_0(y), g_0(x),g_1(x),g_2(x)),&\mbox{ if } g_0(x)=g_3(x)\mbox{ and }\\
&g_0(x),g_1(x),g_2(x)\mbox{ are distinct}\\
\end{array}
\right.
\]

It can be directly verified that these give a well-defined augmentation-preserving  chain map in dimensions 
less than or equal to $3$. 

\subsection{The refined cross ratio map}

We specialize now to the case where $G$ is a subgroup of $\spl{2}{F}$ for some field $F$ and $P=\projl{F}$. 
From the calculations of 
sections \ref{sec:bloch} and \ref{sec:main}, we have 
\[
H_3\left((L_\bullet(\projl{F}))_{\spl{2}{F}}\right)\cong \rpbker{F}\subset \rpb{F}
\]
and the isomorphism is induced by the map 
\[
(L_3(\projl{F}))_{\spl{2}{F}}\to\rpb{F}, (x_0,x_1,x_2,x_3)\mapsto \an{\phi(x_0,x_1,x_2)}\gpb{\frac{\phi(x_0,x_1,x_3)}{\phi(x_0,x_1,x_2)}}
\]

We will call this map the \emph{refined cross ratio} and will denote it by $\rcr$.  
Thus, if $x_0,\ldots,x_3$ are distinct points 
of $\projl{F}$, we have 
\begin{eqnarray*}
\rcr(x_0,x_1,x_2,x_3)=\left\{
\begin{array}{ll}
\an{\frac{(x_2-x_0)(x_0-x_1)}{x_2-x_1}}\gpb{\frac{(x_2-x_1)(x_3-x_0)}{(x_2-x_0)(x_3-x_1)}},
&\mbox{ if } x_i\not=\infty\\
&\\
\an{x_1-x_2}\gpb{\frac{x_1-x_2}{x_1-x_3}},&\mbox{ if } x_0=\infty\\
&\\
\an{x_2-x_0}\gpb{\frac{x_3-x_0}{x_2-x_0}},&\mbox{ if } x_1=\infty\\
&\\
\an{x_0-x_1}\gpb{\frac{x_3-x_0}{x_3-x_1}},&\mbox{ if } x_2=\infty\\
&\\
\an{\frac{(x_2-x_0)(x_0-x_1)}{x_2-x_1}}\gpb{\frac{x_2-x_1}{x_2-x_0}},&\mbox{ if } x_3=\infty\\
\end{array}
\right.
\end{eqnarray*}

Putting all of this together, we conclude:  

If $G$ is a subgroup of $\spl{2}{F}$, then the composite homomorphism 
$\ho{3}{G}{\Z}\to\ho{3}{\spl{2}{F}}{\Z}\to\rpb{F}$ can 
be calculated on the level of chains as 
\[
\xymatrix{
(F_3)_G\ar[r]^-{\beta}
& L_3(\projl{F})_G\ar[r]&
L_3(\projl{F})_{\spl{2}{F}}\ar[r]^-{\rcr}&\rpb{F}
}
\]

\subsection{Finite cyclic subgroups of $\spl{2}{F}$}
We begin with the following observation: Let $G$ be a group and let $F_\bullet$ be the standard (left) homogeneous 
resolution of $\Z$ as a $\Z[G]$-module.  (For convenience below, we will work modulo degenerate simplices; i.e., we 
set $(g_0,\ldots,g_n)=0$ if $g_i=g_{i+1}$ for some $i\leq n-1$.) The augmented resolution
\[
\cdots\to  F_n\to \cdots\to F_0\to \Z=F_{-1}\to 0
\] 
is contractible as a complex of abelian groups via the homotopy $h_n:F_n\to F_{n+1}$ sending 
$(g_0,\ldots,g_n)$ to $(1,g_0,\ldots,g_n)$. 

Thus if $C_\bullet$ is any complex of free $\Z[G]$-modules, with 
$C_0=\Z[G]$ and $C_1\to C_0\to \Z$ the zero map, we can recursively construct an augmentation preserving chain map of 
$\Z[G]$-complexes $\alpha_\bullet:C_\bullet\to F_\bullet$ as follows: We let $\alpha_0=\id{\Z[G]}$, and if 
$e_1,\ldots,e_s$ is a basis of $C_{n+1}$, we set
\[
\alpha_{n+1}(e_i)=h_n(\alpha_n(d e_i)).
\] 

Now $t\in\spl{2}{F}$ have finite order $r$ and let $G=\an{t}$ be the cyclic group generated by $t$, 
and let $C_\bullet$ be the standard 
$2$-periodic resolution of $\Z$ by free $\Z[G]$-modules:
\[
\xymatrix{\cdots \ar[r]^-{t-1}&\Z[G]\ar[r]^-{N}&\Z[G]\ar[r]^{t-1}&\Z[G]\ar@{>>}[r]&\Z}
\]
where $N=1+t+\cdots+t^{r-1}\in\Z[G]$. 

Applying the recipe above to this situation gives a chain map of complexes of $\Z[G]$-modules 
$C_\bullet\to F_\bullet$ which is given in dimension $3$ 
by the formula
\[
\Z[G]=C_3\to F_3,\ 1\mapsto \sum_{i=0}^{r-1}(1,t,t^{i+1},t^{i+2}).
\]

Finally, if we choose $x\in \projl{F}$ and 
if we choose $y\in \projl{F}\setminus G\cdot x$,
then it follows that composite 
\[
\Z/n\cong \ho{3}{G}{\Z}\to \rpb{F}
\]
is given by the formula 
\[
1\mapsto \sum_{i=0}^{r-1}\rcr(\beta_3^{x,y}(1,t,t^{i+1},t^{i+2})).
\]
Furthermore, the uniqueness  up to chain homotopy of the chain map $\beta$ 
guarantees us that the resulting map is independent 
of the particular choice of $x$ and $y$.

If we suppose that $G_x=\{ 1\}$, then, from the definition of $\beta_3^{x,y}$ above, we have:
\begin{eqnarray*}
\beta_3^{x,y}(1,t,t,t^2)&=&0\\
\beta_3^{x,y}(1,t,t^{i+1},t^{i+2})&=&(x,t(x),t^{i+1}(x),t^{i+2}(x))\mbox{ for }1\leq i\leq r-3\\
\beta_3^{x,y}(1,t,t^{r-1},1)&=&(y,t(x),t^{-1}(x),x)-(y, x, t(x),t^{-1}(x))\\
\beta_3^{x,y}(1,t,t^r,t^{r+1})&=&\beta_3^{x,y}(1,t,1,t)\\
&=&\left\{
\begin{array}{ll}
0, & y=t(y)\\
(y,t(y),x,t(x))+(y,t(y),t(x),x),& y\not= t(y)
\end{array}
\right. 
\end{eqnarray*}

For example, if $G_\infty=G\cap B= \{ 1\}$ and if $r\geq 3$, 
then we can take $x=\infty$ and $y\not= t^i(\infty)$ for $i=0,\ldots, n-1$. Suppose also that $y\not= t(y)$. 
Then, using the 
formulae for $\rcr$ given above, we see that  the map
$\Z/n=H_3(G)\to\rpb{F}$ is given by 
\begin{eqnarray*}
1&\mapsto& 
\left(
\sum_{i=1}^{r-3}\an{t(\infty)-t^{i+1}(\infty)}\gpb{\frac{t(\infty)-t^{i+1}(\infty)}{t(\infty)-t^{i+2}(\infty)}}
\right)
\\
&&+
\an{\frac{(t^{-1}(\infty)-y)(y-t(\infty))}{t^{-1}(\infty)-t(\infty))}}
\gpb{\frac{t^{-1}(\infty)-t(\infty)}{t^{-1}(\infty)-y}}-\an{t(\infty)-y}\gpb{\frac{t^{-1}(\infty)-y}
{t(\infty)-y}}
\\
&&+\an{y-t(y)}\gpb{\frac{t(\infty)-y}{t(\infty)-t(y)}}+
\an{\frac{(y-t(y))(t(\infty)-y)}{t(\infty)-t(y)}}\gpb{\frac{t(\infty)-t(y)}{t(\infty)-y}}.
\label{eqn:cyclicgen}
\end{eqnarray*}

\subsection{Third homology of generalised quaternion groups}  

Let $t$ be an even integer and let $Q=Q_{4t}=\langle x,y\ |\ x^t=y^2, xyx=y\rangle$. Again, let $F_\bullet$ be the standard 
(nondegenerate) resolution of $\Z$ over $\Z[Q]$. Let $C_\bullet$ be the $4$-periodic resolution of $\Z$ over $\Z[Q]$ (see 
Cartan-Eilenberg \cite{cartan:eilenberg}, XII.7). We can use the recipe above to construct an augmentation-preserving 
chain map $\alpha_\bullet: C_\bullet \to F_\bullet$. In particular,  $C_3=\Z[Q]$ and we obtain 
\[
\alpha_3(1)=\left(\sum_{i=1}^{t-1}(1,x,x^{i+1},x^{i+2})\right)-(1,x,xy,xy^2)-(1,xy,y^2,xy^2)-(1,xy,y,y^2).
\]

Now $\ho{3}{Q}{\Z}\cong\Z/4t$ and thus the cycle on the right represents a generator of this cyclic group. If 
$q=p^f$ with $p$ an odd prime, then the $2$-Sylow subgroups of $\spl{2}{\F{q}}$ are generalised quaternion and 
we will use the maps $\beta_3$ and $\rcr$ as above to calculate the image $\ho{3}{Q}{\Z}\to\rbl{\F{q}}$.

\section{Bloch groups of finite fields}\label{sec:blochfinite}

In this section we use the calculations of sections \ref{sec:finite} and section \ref{sec:h3g} as well as 
 Theorem \ref{thm:main} to give an explicit description of the Bloch groups of finite fields.

We begin by observing:   

\begin{lem} \label{lem:blochfin}
For a finite field $F$ (with at least $4$ elements) the natural map $\rpb{F}\to\pb{F}$ induces an isomorphism
$\rbl{F}\cong\bl{F}$.
\end{lem}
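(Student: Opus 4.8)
The plan is to deduce the isomorphism from the triviality of the $F^\times$-action on $\rbl{F}$, which I will in turn extract from the integral form of Theorem \ref{thm:main}(2) together with the finite-field computations of Section \ref{sec:finite}. First I would dispose of the even characteristic case: there every element of $F^\times$ is a square, so $\sq{F}=\{1\}$, $\sgr{F}=\Z$, and tautologically $\rpb{F}=\pb{F}$, $\rbl{F}=\bl{F}$. So assume $\mathrm{char}(F)=p$ is odd. By Lemma \ref{lem:rblfin} the natural map factors as $\rbl{F}\twoheadrightarrow\rbl{F}_{F^\times}\xrightarrow{\ \cong\ }\bl{F}$, so it suffices to show that $F^\times$ (equivalently $\sq{F}$) acts trivially on $\rbl{F}$: in that case $\rbl{F}=\rbl{F}_{F^\times}$ and the natural map is an isomorphism.

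To prove triviality of the action I would use that the complex of Theorem \ref{thm:main}(2) is a complex of $\sgr{F}$-modules. It provides an $\sgr{F}$-equivariant surjection $\pi:\ho{3}{\spl{2}{F}}{\Z}\twoheadrightarrow\rbl{F}$ whose kernel contains the image of the equivariant map $\ho{3}{B}{\Z}\to\ho{3}{\spl{2}{F}}{\Z}$. By Corollary \ref{cor:B} this image contains the entire $p$-Sylow subgroup $A_3(F)=\psyl{\ho{3}{\spl{2}{F}}{\Z}}{p}$ of the finite group $\ho{3}{\spl{2}{F}}{\Z}$, since $B\hookrightarrow\spl{2}{F}$ induces an isomorphism $\psyl{\ho{3}{B}{\Z}}{p}\cong\psyl{\ho{3}{\spl{2}{F}}{\Z}}{p}$. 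Hence $\ker\pi\supseteq A_3(F)$ and $\rbl{F}$ is an equivariant quotient of
\[
\ho{3}{\spl{2}{F}}{\Z}/A_3(F)\cong\ho{3}{\spl{2}{F}}{\Z}\otimes\Z[1/p]=\ho{3}{\spl{2}{F}}{\Z[1/p]},
\]
the last identification holding because $A_3(F)$ is the $p$-primary part and $\Z[1/p]$ is flat over $\Z$. By Lemma \ref{lem:trivial}, $F^\times$ acts trivially on $\ho{3}{\spl{2}{F}}{\Z[1/p]}$; since triviality of an action passes to equivariant quotients, $F^\times$ acts trivially on $\rbl{F}$, and the lemma follows.

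The one point requiring care — rather than a genuine obstacle — is the bookkeeping on coefficients and on the failure of exactness in the middle of the complex. I only ever use that $\pi$ is \emph{surjective} and that $\ker\pi$ \emph{contains} $A_3(F)$; both are integral statements, so the fact that the middle homology of the complex has order at most $2$ is irrelevant here. Crucially, because $p$ is odd the coefficient ring $\Z[1/p]$ retains all $2$-torsion, so Lemma \ref{lem:trivial} already governs the full action on $\rbl{F}$, including its $2$-primary part; thus once the $p$-primary part of $\ho{3}{\spl{2}{F}}{\Z}$ is absorbed into $\ker\pi$, no separate treatment of the $2$-part is needed. I would double-check only that the map $\ho{3}{B}{\Z}\to\ho{3}{\spl{2}{F}}{\Z}$ appearing in Theorem \ref{thm:main}(2) is indeed the $\sgr{F}$-equivariant edge map of the spectral sequence whose $p$-Sylow piece is the corestriction isomorphism of Corollary \ref{cor:B}, so that the inclusion $A_3(F)\subseteq\ker\pi$ is legitimate.
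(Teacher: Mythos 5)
Your proposal is correct and follows essentially the same route as the paper: reduce via Lemma \ref{lem:rblfin} to showing the $F^\times$-action on $\rbl{F}$ is trivial, realize $\rbl{F}$ as an $\sgr{F}$-equivariant quotient of $\ho{3}{\spl{2}{F}}{\Z[1/p]}$ using Theorem \ref{thm:main}, and invoke Lemma \ref{lem:trivial}. You merely spell out in more detail (via Corollary \ref{cor:B} and the absorption of $A_3(F)$ into the kernel) why the quotient factors through $\Z[1/p]$ coefficients, a point the paper's proof leaves implicit.
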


\begin{proof}
By Lemma \ref{lem:rblfin} we know that $\rbl{F}_{F^\times}\cong\bl{F}$. However, by Lemma \ref{lem:trivial}, 
$F^\times$ acts trivially on $\ho{3}{\spl{2}{F}}{\Z[1/p]}$ (where $p$ is the characteristic of $F$). By Theorem 
\ref{thm:main}, $\rbl{F}$ is  naturally a quotient of the $\sgr{F}$-module $\ho{3}{\spl{2}{F}}{\Z[1/p]}$, and thus 
$F^\times$ acts trivially on $\rbl{F}$. 
\end{proof}

\begin{rem} On the other hand, for a finite field $F$ the map $\rpb{F}\to\pb{F}$ is not an isomorphism 
if the characteristic is odd.  We have a commutative diagram with exact rows
\[
\xymatrix{
0\ar[r]
&
\rbl{F}\ar[r]\ar[d]^-{\cong}
&
\rpb{F}\ar[r]\ar@{>>}[d]
&\aug{F}^2\ar[r]\ar@{>>}[d]
&
0\\
0\ar[r]
&
\bl{F}\ar[r]
&
\pb{F}\ar[r]
&
\sym{2}{\F{2}}{\sq{F}}\ar[r]
&
0
}
\]
from which we derive the short exact sequence
\[
0\to \aug{F}^3\to\rpb{F}\to\pb{F}\to 0.
\]
If the characteristic is odd, then $\aug{F}^3\cong \Z$ with a nontrivial $\sgr{F}$-structure; any nonsquare 
element of $F^\times$ acts as multiplication by $-1$.
\end{rem}

For any field $F$ and for $x\in F^\times$ we define the element $\sus{x}:=\gpb{x}+\gpb{x^{-1}}\in \pb{F}$. The following 
lemma (due to Suslin, \cite{sus:bloch}) is easily verified:

\begin{lem}\label{lem:suslin}
\begin{enumerate}
\item
For any field $F$ (with at least $4$ elements) there is a well-defined group homomorphism 
\[
\sq{F}\to\pb{F},\quad \an{x}\mapsto \sus{x}.
\]
In particular, $\sus{x}=0$ if $x\in (F^\times)^2$, and $2\cdot\sus{y}=0$ for all $y\in F^\times$.
\item For $x\not=1$ let 
\[
\bconst{F}(x)=\gpb{x}+\gpb{1-x}\in\bl{F}
\]
Then $\bconst{F}(x)=\bconst{F}(y)$ for all $x,y\in F^\times\setminus\{ 1\}$, and $3\bconst{F}=\sus{-1}$.
\end{enumerate}
\end{lem}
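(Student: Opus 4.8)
The plan is to derive everything from the five-term relation $R_{x,y}$ in $\pb{F}$ by making judicious substitutions; the whole argument rests on first showing that the symbol $\sus{x}=\gpb{x}+\gpb{x^{-1}}$ is additive and $2$-torsion, after which both parts fall out by regrouping the six values of the cross-ratio into pairs.

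For part (1), I would first substitute $y=x^{-1}$ into $R_{x,y}$. A short computation of the resulting cross-ratio arguments gives the relation
\[
\gpb{x}-\gpb{x^{-1}}+\gpb{x^{-2}}-\gpb{-x^{-1}}+\gpb{-x}=0 ,
\]
and adding this to its image under $x\mapsto x^{-1}$ makes every term cancel except $\gpb{x^2}+\gpb{x^{-2}}$, so that $\sus{x^2}=0$. Next I would substitute $x\mapsto x^{-1}$, $y\mapsto y^{-1}$ in $R_{x,y}$; because the two awkward arguments $\tfrac{1-x^{-1}}{1-y^{-1}}$ and $\tfrac{1-x}{1-y}$ simply interchange under this substitution, adding the new relation to $R_{x,y}$ collapses everything to $\sus{x}-\sus{y}+\sus{y/x}=0$, i.e.
\[
\sus{y/x}=\sus{y}-\sus{x}.
\]
Replacing $y$ by $xy$ turns this into the additivity $\sus{xy}=\sus{x}+\sus{y}$. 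Together with $\sus{x^2}=0$ this shows $\sus{}$ kills squares and that $2\sus{y}=\sus{y^2}=0$, so $\sus{}$ factors through $\sq{F}$ and defines the asserted homomorphism $\sq{F}\to\pb{F}$, $\an{x}\mapsto\sus{x}$; the two "in particular" statements are precisely the square-killing and $2$-torsion properties.

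For part (2), constancy of $\bconst{F}(x)=\gpb{x}+\gpb{1-x}$ comes from adding $R_{x,y}$ to $R_{1-x,1-y}$. Writing $a=y/x$ and $b=\tfrac{1-x}{1-y}$, the five arguments of $R_{x,y}$ are $x,y,a,ab,b$ while those of $R_{1-x,1-y}$ are $1-x,1-y,b^{-1},(ab)^{-1},a^{-1}$, so the sum telescopes into
\[
\bconst{F}(x)-\bconst{F}(y)+\sus{a}-\sus{ab}+\sus{b}=0 ,
\]
and the additivity from part (1) makes the three $\sus{}$-terms cancel, giving $\bconst{F}(x)=\bconst{F}(y)$. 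For the final identity I would evaluate the now-constant $\bconst{F}$ at $x$, at $x^{-1}$ and at $(1-x)^{-1}$ and sum: this lists exactly the six values of the cross-ratio, and regrouping them into the three inverse-pairs $\{x,x^{-1}\}$, $\{1-x,(1-x)^{-1}\}$ and $\{(x-1)/x,\,x/(x-1)\}$ yields $3\bconst{F}=\sus{x}+\sus{1-x}+\sus{(x-1)/x}$. Expanding the last term by additivity, $\sus{(x-1)/x}=\sus{-1}+\sus{1-x}-\sus{x}$, leaves $3\bconst{F}=2\sus{1-x}+\sus{-1}=\sus{-1}$.

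I expect the only real friction to be bookkeeping: verifying the cross-ratio arguments after each substitution, and checking that the cancellations in $R_{x,y}+R_{1-x,1-y}$ and in the six-value regrouping genuinely reduce to $\sus{}$-pairs (both crucially use the additivity established in part (1)). A secondary point is that each substitution is legitimate only when the relevant points remain distinct (for instance $y=x^{-1}$ requires $x^2\neq1$, and $R_{1-x,1-y}$ requires $x,y\neq 0,1$ with $x\neq y$), so for the smallest fields one must either choose auxiliary elements with a little care or extend the derived relations using the homomorphism property once it is available; since $F$ has at least $4$ elements there is always enough room to do this.
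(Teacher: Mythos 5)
Your verification is correct, and all the cross-ratio computations check out (I verified that $R_{x,x^{-1}}$ gives the five terms you list, that $R_{x,y}+R_{x^{-1},y^{-1}}$ collapses to $\sus{x}-\sus{y}+\sus{y/x}=0$, that the arguments of $R_{x,y}$ and $R_{1-x,1-y}$ really are $x,y,a,ab,b$ and $1-x,1-y,b^{-1},(ab)^{-1},a^{-1}$, and that the six-value regrouping works). The paper itself offers no proof --- it states the lemma as "due to Suslin" and "easily verified" --- so your argument supplies exactly the verification being deferred to \cite{sus:bloch}, and it is essentially the standard one. Two small points: the statement also asserts $\bconst{F}(x)\in\bl{F}$, which you never address; this is the one-line check $\lambda(\gpb{x}+\gpb{1-x})=(1-x)\asymm x + x\asymm(1-x)=0$ in the antisymmetric square. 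And among the degenerate cases you rightly flag, the only one needing an actual extra step is $2\sus{-1}=0$ (your derivation $2\sus{x}=\sus{x^2}$ uses additivity with $x=z$, which requires $x\neq\pm1$); it follows by writing $\sus{-1}=\sus{a}+\sus{-a^{-1}}$ for some $a\neq 0,\pm1$, which exists since $F$ has at least $4$ elements.
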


For a finite field $\F{q}$ of characteristic $2$, 
Theorem \ref{thm:main} tells us that $\rbl{\F{q}}=\bl{\F{q}}=\pb{\F{q}}$ is cyclic of order $q+1$.

If $\F{q}$ has odd characteristic, then Theorem \ref{thm:main} tells us that $\bl{\F{q}}$ is cyclic of 
order $q+1$ or $(q+1)/2$.  In fact, it is always the latter:

\begin{lem}\label{lem:oddfinite}
Let $\F{q}$ be a finite field of odd characteristic. Then $\rbl{\F{q}}=\bl{\F{q}}$ is cyclic of order $(q+1)/2$.
\end{lem}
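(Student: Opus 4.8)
The plan is to start from the dichotomy already recorded immediately before the statement: by Lemma \ref{lem:blochfin} we have $\rbl{\F{q}}=\bl{\F{q}}$, and Theorem \ref{thm:main} together with the finite-field computations forces this group to be cyclic of order $q+1$ or $(q+1)/2$. So the whole content is to exclude the value $q+1$. Write $q-1=2^{a}v$ and $q+1=2^{b}u$ with $u,v$ odd; since $q$ is odd, exactly one of $a,b$ equals $1$, and the two candidate orders $q+1$ and $(q+1)/2$ have the same odd part $u$. Hence it is enough to show that the $2$-primary part of $\rbl{\F{q}}$ has order $2^{b-1}$ rather than $2^{b}$.

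First I would localise the complex of Theorem \ref{thm:main}(2) at $2$. As its middle homology is killed by $2$, localisation is exact on the outer terms and preserves the middle homology, giving a complex $0\to(\ho{3}{B}{\Z})_{(2)}\to(\ho{3}{\spl{2}{\F{q}}}{\Z})_{(2)}\to(\rbl{\F{q}})_{(2)}\to 0$, exact except at the middle where the homology has order at most $2$. By Corollary \ref{cor:B} the left term is the $2$-part of $\ho{3}{T}{\Z}=\Z/(q-1)$, i.e. $\Z/2^{a}$; and the $2$-Sylow subgroup $Q$ of $\spl{2}{\F{q}}$ is a generalised quaternion group $Q_{4t}$ of order $2^{a+b}$ (the $2$-part of $q^2-1$). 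By Corollary \ref{cor:subgp}, $\ho{3}{Q}{\Z}\cong\Z/2^{a+b}$ and its corestriction into $\ho{3}{\spl{2}{\F{q}}}{\Z}$ is injective; since $|\ho{3}{Q}{\Z}|=2^{a+b}$ is exactly the $2$-part of $q^2-1=|\ho{3}{\spl{2}{\F{q}}}{\Z[1/p]}|$ (Corollary \ref{cor:1/p}), the image is the full $2$-primary part, so $(\ho{3}{\spl{2}{\F{q}}}{\Z})_{(2)}\cong\Z/2^{a+b}$. Because $\ho{3}{\spl{2}{\F{q}}}{\Z}\to\rbl{\F{q}}$ is onto, it carries $\ho{3}{Q}{\Z}$ onto $(\rbl{\F{q}})_{(2)}$, which is therefore generated by the image $\zeta$ of a generator of $\ho{3}{Q}{\Z}$.

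It now suffices to prove $2^{b-1}\zeta=0$: combined with the fact that $(\rbl{\F{q}})_{(2)}$ has order $2^{b}$ or $2^{b-1}$, this pins the order to $2^{b-1}$ and yields $|\rbl{\F{q}}|=(q+1)/2$. To evaluate $\zeta$ I would realise $Q=Q_{4t}$ inside $\spl{2}{\F{q}}$, its maximal cyclic subgroup $\langle x\rangle$ lying in the split torus $T$ or in the non-split (norm-one) torus according as $a>b$ or $b>a$, and then push the explicit generating cycle $\alpha_3(1)$ of $\ho{3}{Q}{\Z}$ — displayed in the subsection on generalised quaternion groups — through the chain-level description of $\ho{3}{\spl{2}{\F{q}}}{\Z}\to\rbl{\F{q}}$, namely $\beta_3^{x_0,y_0}$ followed by the refined cross-ratio $\rcr$, for a base-point $x_0\in\projl{\F{q}}$ with small $\langle x\rangle$-stabiliser and $y_0$ outside its orbit. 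This expresses $\zeta$ as an explicit $\Z$-linear combination of symbols $\gpb{\cdot}$, which I would collapse using the relations $S_{x,y}$ defining $\rpb{\F{q}}$ together with the identities $2\sus{y}=0$ and $3\bconst{F}=\sus{-1}$ of Lemma \ref{lem:suslin}.

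The main obstacle is precisely this final evaluation. The map $\beta_3$ is defined by cases, so one must keep careful track of which of the points $g_i(x_0)$ coincide along the $\langle x\rangle$-orbit of $x_0$ (and whether $y_0$ is moved by the elements occurring), and the resulting sum of refined cross-ratios must be reduced to a bounded multiple of a single Suslin element in order to read off that it is annihilated by $2^{b-1}$; in the cases with $b=1$ this amounts to showing $\zeta=0$ outright. As a consistency check one can run the analogous, simpler computation on the non-split cyclic subgroup of order $q+1$, which acts freely on $\projl{\F{q}}$ so that the closed formula of the preceding subsection applies directly.
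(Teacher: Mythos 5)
Your reduction is sound and matches the paper's in outline: the dichotomy ``order $q+1$ or $(q+1)/2$'' coming from Theorem \ref{thm:main} only needs to be resolved at the prime $2$; the $2$-primary part of $\ho{3}{\spl{2}{\F{q}}}{\Z}$ is detected, via Corollary \ref{cor:subgp}, by the third homology of a $2$-subgroup; and the whole question is whether an explicit homology class dies in $\rbl{\F{q}}$. The problem is that you stop exactly where the proof begins. The ``final evaluation'' that you flag as the main obstacle \emph{is} the mathematical content of the lemma --- everything before it was already recorded in the paragraph preceding the statement --- and you neither carry it out nor give any reason why it should close in your favour rather than yield a nonzero class. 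As written, the proposal is a correct plan with the proof missing.

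Two concrete remarks on how the paper actually makes that evaluation tractable, both of which your route bypasses. For $q\equiv 1\pmod 4$ the paper does not push a single generating cycle of $\ho{3}{Q}{\Z}$ through $\beta_3$ and $\rcr$: it shows that $\rcr(\beta_3^{\infty,i}(g_0,g_1,g_2,g_3))=0$ in $\pb{\F{q}}$ for \emph{every} $4$-tuple of elements of the quaternion $2$-Sylow $Q$. This works because $Q$ is generated by diagonal matrices together with $w$, so $Q\cdot\infty=\{0,\infty\}$, and the auxiliary point $i$ (with $i^2=-1$) satisfies $w\cdot i=i$; every nonzero value of $\beta_3^{\infty,i}$ is then a pair of $4$-tuples whose $\rcr$-images assemble into a Suslin element $\sus{(z_1/z_0)^2}$, which vanishes by Lemma \ref{lem:suslin}. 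For $q\equiv -1\pmod 4$ the paper does \emph{not} compute with the full generalized quaternion $2$-Sylow, which is your proposal and is genuinely awkward there (the relevant maximal cyclic subgroup sits in the non-split torus, so the orbit of $\infty$ is large and $\beta_3$ does not collapse). Instead it notes that the subgroup of order $4$ of the cyclic $2$-part of $\ho{3}{\spl{2}{\F{q}}}{\Z}$ --- which is exactly the subgroup your element $2^{b-1}\zeta$ generates --- is the injective image of $\ho{3}{\langle w\rangle}{\Z}\cong\Z/4$, and the four-term cycle from the periodic resolution of $\langle w\rangle$ evaluates under $\rcr\circ\beta_3^{\infty,y}$ to $2\sus{-1/y^2}=0$. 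You should either supply these computations or equivalents; until then the key step of the argument is absent.
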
 

\begin{proof} As above, we let 
\[
w=\matr{0}{1}{-1}{0}\in\spl{2}{\F{q}}.
\]

Suppose first that $q\equiv 1\pmod{4}$. Then $\F{q}$ contains an element $i$ satisfying $i^2=-1$. 

Let $2^t$ be the exact power of $2$ dividing $q-1$. 
Then a $2$-Sylow subgroup of $\spl{2}{\F{q}}$ is the  generalised 
quaternion group $Q$ generated by $w$ and $\{ D(z) |z\in \mu_{2^t}\}$. A typical element of $Q$ has the form 
$g=D(z)w^e$ with $e\in \{ 0,1\}$. Clearly, we must prove that the 
composite 
\[
\ho{3}{Q}{\Z}\to\ho{3}{\spl{2}{\F{q}}}{\Z}\to\pb{\F{q}}
\] 
is the zero map. We will calculate using the standard (homogeneous) resolution of $Q$.

Now suppose that $g_k=D(z_k)w^{e(k)}\in Q$,$0\leq k\leq 3$. We will show that 
\[
\rcr(\beta_3^{\infty,i}(g_0,g_1,g_2,g_3))=0\mbox{ in }\pb{\F{q}}.
\]

Since $Q\cdot\infty=\{ 0,\infty\}$, it 
follows 
that either two successive terms of $(g_0(\infty),g_1(\infty),g_2(\infty),g_3(\infty))$ are equal (in which case  
$\beta_3^{\infty,i}(g_0,g_1,g_2,g_3)=0$ or this term has one of the forms 
\[
(\infty,0\infty,0)\mbox{ or }(0,\infty,0,\infty).
\]
Either way, since $w\cdot i =i$, we must have
\[
\beta_3^{\infty,i}(g_0,g_1,g_2,g_3)=(z_0^2i,z_1^2i,\infty,0)+(z_0^2i,z_1^2i,0,\infty)
\]
and applying $\rcr$ to this and taking the image in $\pb{\F{q}}$ gives the element
\[
\gpb{\left(\frac{z_1}{z_0}\right)^2}+\gpb{\left(\frac{z_0}{z_1}\right)^2}=\sus{\left(\frac{z_1}{z_0}\right)^2}=0
\]
by Lemma \ref{lem:suslin}.

On the other hand, if $q\equiv -1\pmod{4}$, we let $G$ be the cyclic subgroup of $\spl{2}{\F{q}}$ generated 
by $w$. Then 
it will be enough to show that the composite
\[
\Z/4\cong\ho{3}{G}{\Z}\to\ho{3}{\spl{2}{\F{q}}}{\Z}\to\rbl{F}
\] 
is the zero map. For the map $\ho{3}{G}{\Z}\to\ho{3}{\spl{2}{\F{q}}}{\Z}$ is injective (by Corollary 
\ref{cor:subgp}) and thus will follow that $\rbl{\F{q}}$ has  order $(q+1)/2$ in this case also.

Using the formulae of section \ref{sec:h3g}, $1\in \Z/4$ maps to the cycle represented by 
\[
(1,w,w,w^2)+(1,w,w^2,w^3)+(1,w,w^3,1)+(1,w,1,w)
\]
in the standard resolution for $G$. Applying $\beta_3^{\infty,y}$ to this gives the term 
\[
2\cdot[(y,w\cdot y,\infty,0)+(y,w\cdot y,0,\infty)].
\]
Finally, applying $\rcr$ to this and taking the image in $\pb{\F{q}}$ gives the element
\[
2\cdot\sus{\frac{w(y)}{y}}=2\sus{-\frac{1}{y^2}}\in\pb{\F{q}}
\]
which is zero by Lemma \ref{lem:suslin} again.
\end{proof}

We can thus extend the main result of \cite{sus:bloch} to the case of finite fields:

\begin{cor}\label{cor:susb} For any finite field $F$ there is a natural short exact sequence
\[
0\to\covtor{F}\to\kind{F}\to\bl{F}\to 0
\]
\end{cor}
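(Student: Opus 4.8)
The plan is to obtain the sequence by identifying its three terms with the terms of the complex produced by Theorem~\ref{thm:main}, after inverting the characteristic $p$. Write $q=\card{F}$. By Corollary~\ref{cor:k3} there is a natural identification $\kind{F}=\ho{3}{\spl{2}{F}}{\Z[1/p]}$, and by Lemma~\ref{lem:blochfin} the natural map $\rbl{F}\to\bl{F}$ is an isomorphism. Composing the surjection $\ho{3}{\spl{2}{F}}{\Z}\to\rbl{F}$ of Theorem~\ref{thm:main} with these two identifications produces the natural homomorphism $g\colon\kind{F}\to\bl{F}$ which will be the right-hand arrow; the task is then to show $g$ is surjective and to identify $\ker{g}$ with $\covtor{F}$.

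First I would record the orders. By Quillen's computation $\card{\kind{F}}=q^2-1=(q-1)(q+1)$, and $\Tor{\mu_F}{\mu_F}=\Z/(q-1)$ since $\mu_F=\units{F}$ is cyclic of order $q-1$. By Lemma~\ref{lem:oddfinite}, $\bl{F}$ is cyclic of order $(q+1)/2$ when $p$ is odd, while Theorem~\ref{thm:main} gives that $\bl{F}=\pb{F}$ is cyclic of order $q+1$ when $p=2$. Next I would tensor the complex of Theorem~\ref{thm:main} with $\Z[1/p]$, which is flat and leaves all these groups unchanged since they have order prime to $p$. The left-hand term becomes $\ho{3}{B}{\Z[1/p]}=\ho{3}{T}{\Z[1/p]}=\Tor{\mu_F}{\mu_F}$: the $p$-primary summand $A_3(F)$ is killed, and $\ho{3}{T}{\Z}=\Extpow{3}{\Z}{\units{F}}\oplus\Tor{\mu_F}{\mu_F}=\Tor{\mu_F}{\mu_F}$ because $\units{F}$ is cyclic.

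In characteristic $2$ this finishes the argument at once: part~(3) of Theorem~\ref{thm:main} is already exact, so after inverting $2$ it reads
\[
0\to\Tor{\mu_F}{\mu_F}\to\kind{F}\to\bl{F}\to 0,
\]
and $\covtor{F}=\Tor{\mu_F}{\mu_F}$ in this case. In odd characteristic, part~(2) gives only a complex
\[
0\to\Tor{\mu_F}{\mu_F}\xrightarrow{f}\kind{F}\xrightarrow{g}\bl{F}\to 0
\]
with $f$ injective, $g$ surjective, and middle homology $\ker{g}/\image{f}$ of order at most $2$. A count of orders shows $\card{\ker{g}}=(q^2-1)/((q+1)/2)=2(q-1)$, so this homology has order exactly $2$ and $\ker{g}$ sits in an extension with subgroup $\Tor{\mu_F}{\mu_F}$ and quotient $\Z/2$.

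The main obstacle is to show this extension is the nontrivial one, i.e. that $\ker{g}\cong\covtor{F}$ rather than the split extension $\Z/2\oplus\Z/(q-1)$. I expect to settle this from cyclicity alone: as a subgroup of the cyclic group $\kind{F}=\Z/(q^2-1)$, the group $\ker{g}$ is cyclic of order $2(q-1)$; since $q$ is odd, $q-1$ is even, and the split extension $\Z/2\oplus\Z/(q-1)$ is therefore non-cyclic, so $\ker{g}$ must be the nontrivial one. As $\covtor{F}$ is by definition the (cyclic) nontrivial extension, namely $\Z/2(q-1)$, we obtain $\ker{g}\cong\covtor{F}$. Taking $\covtor{F}=\ker{g}$ then yields the required short exact sequence, its naturality inherited from Theorem~\ref{thm:main} and Corollary~\ref{cor:k3}.
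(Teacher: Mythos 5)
Your proposal is correct and follows essentially the same route as the paper: the paper's proof simply cites Corollary \ref{cor:1/p}, Theorem \ref{thm:main}, Lemma \ref{lem:oddfinite}, Corollary \ref{cor:k3} and Lemma \ref{lem:blochfin} to assert the sequence, and the order count together with the cyclicity of $\Z/(q^2-1)$ (forcing the kernel to be the nonsplit, i.e.\ cyclic, extension $\Z/2(q-1)$) is exactly the argument left implicit there. You have merely written out those details explicitly, which is fine.
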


\begin{proof}
By Corollary \ref{cor:1/p},  Theorem \ref{thm:main} and Lemma \ref{lem:oddfinite} we have a short exact
sequence 
\[
0\to\covtor{\F{q}}\to\ho{3}{\spl{2}{\F{q}}}{\Z[1/p]}\to\rbl{\F{q}}\to 0
\]
for any finite field $\F{q}$ of order $q=p^f$. However, by Corollary \ref{cor:k3} and Lemma \ref{lem:blochfin}
we have natural isomorphisms
\[
\ho{3}{\spl{2}{\F{q}}}{\Z[1/p]}\cong\kind{\F{q}}\mbox{ and }\rbl{\F{q}}\cong\bl{\F{q}}.
\]
\end{proof}

\begin{cor}
If $q\equiv 1\pmod{4}$ then $\pb{\F{q}}$ is cyclic of order $q+1$.
\end{cor}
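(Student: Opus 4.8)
The plan is to realize $\pb{\F{q}}$ as an extension of a group of order two by the Bloch group and then to check that this extension is cyclic; the whole statement reduces to a parity observation once the relevant orders are in hand.

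First I would record the short exact sequence
\[
0\to\bl{\F{q}}\to\pb{\F{q}}\to\sym{2}{\F{2}}{\sq{\F{q}}}\to 0
\]
appearing as the bottom row of the diagram in the remark following Lemma~\ref{lem:blochfin}. Since $q\equiv 1\pmod 4$ the characteristic is odd, so $\sq{\F{q}}\cong\Z/2$ and therefore $\sym{2}{\F{2}}{\sq{\F{q}}}\cong\Z/2$. By Lemma~\ref{lem:oddfinite} the group $\bl{\F{q}}=\rbl{\F{q}}$ is cyclic of order $(q+1)/2$. Consequently $\pb{\F{q}}$ is a finite abelian group of order $2\cdot(q+1)/2=q+1$, sitting in an extension of $\Z/2$ by the cyclic group $\Z/\bigl((q+1)/2\bigr)$.

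The one substantive step is to see that this extension is cyclic rather than $\Z/2\oplus\Z/\bigl((q+1)/2\bigr)$, and this is exactly where the hypothesis $q\equiv 1\pmod 4$ is used: it forces $(q+1)/2$ to be odd. Thus $\bl{\F{q}}$ is a cyclic subgroup of odd order $(q+1)/2$ inside the abelian group $\pb{\F{q}}$ of order $q+1$; being of order prime to $2$, it coincides with the odd part of $\pb{\F{q}}$ and is therefore a direct summand, the complementary summand being the $2$-Sylow subgroup of order $2$. Hence $\pb{\F{q}}\cong\Z/\bigl((q+1)/2\bigr)\oplus\Z/2$, and since $(q+1)/2$ and $2$ are coprime the Chinese remainder theorem identifies this with $\Z/(q+1)$.

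I expect no real obstacle here: the computation of the order $q+1$ is immediate from Lemma~\ref{lem:oddfinite} together with the displayed sequence, and the only genuine content is the coprimality argument. It is worth noting that this argument would plainly fail for $q\equiv 3\pmod 4$, where $(q+1)/2$ is even and one would generally expect the non-cyclic group $\Z/2\oplus\Z/\bigl((q+1)/2\bigr)$ instead.
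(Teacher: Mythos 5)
Your proposal is correct and follows essentially the same route as the paper: the short exact sequence $0\to\bl{\F{q}}\to\pb{\F{q}}\to\sym{2}{\F{2}}{\sq{\F{q}}}\to 0$, the identification of $\bl{\F{q}}$ as cyclic of order $(q+1)/2$ via Lemma~\ref{lem:oddfinite}, and the observation that $(q+1)/2$ is odd when $q\equiv 1\pmod 4$, so the extension is forced to be cyclic. The paper states this more tersely, but the coprimality argument you spell out is exactly the intended content.
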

\begin{proof}
In this case $(q+1)/2$ is odd. Since $\sym{2}{\F{2}}{\sq{F}}$ has order $2$, the statement follows from Lemma 
\ref{lem:oddfinite} and the short exact sequence
\[
0\to\bl{\F{q}}\to\pb{\F{q}}\to\sym{2}{\F{2}}{\sq{F}}\to 0.
\] 
\end{proof}

In fact we can use the methods of the last section to write down a formula for a generator of this cyclic group:
Fix a nonsquare element $a\in \F{q}^\times$. Thus $\F{q^2}=\F{q}(\sqrt{a})$ and we have an associated embedding 
\[
\mu: \F{q^2}^\times\to \gl{2}{\F{q}},\qquad x+y\sqrt{a}\mapsto \matr{x}{yb}{y}{x}.
\] 
Now let $\theta\in \F{q^2}$ have order $r:=(q+1)/2$. Then $t=\mu(\theta)\in\spl{2}{\F{q}}$. Let $G=\an{t}\subset
\spl{2}{\F{q}}$. The results above guarantee that the composite homomorphism
\[
\ho{3}{G}{\Z}\to\ho{3}{\spl{2}{\F{q}}}{\Z}\to\bl{\F{q}}
\]
is an isomorphism. 

Since $B\subset \spl{2}{\F{q}}$ has order $q(q-1)$, it follows that $G\cap B=\{ 1\}$. It follows that 
the orbit $G\cdot\infty$ has size $(q+1)/2$. Choosing any $y\in\projl{\F{q}}\setminus G\cdot\infty$ we obtain 
a generator
\begin{eqnarray*}
\left(
\sum_{i=1}^{r-3}\gpb{\frac{t(\infty)-t^{i+1}(\infty)}{t(\infty)-t^{i+2}(\infty)}}
\right)
+
\gpb{\frac{t^{-1}(\infty)-t(\infty)}{t^{-1}(\infty)-y}}-\gpb{\frac{t^{-1}(\infty)-y}{t(\infty)-y}}
+\gpb{\frac{t(\infty)-y}{t(\infty)-t(y)}}+\gpb{\frac{t(\infty)-t(y)}{t(\infty)-y}}
\label{eqn:cyclicgen}
\end{eqnarray*}
of $\bl{\F{q}}$. The last four terms can be simplified: Observe that if $\theta=w+z\sqrt{a}$ then  $\theta^{-1}=
w-z\sqrt{a}$, since $\theta$ has norm $1$. Thus $t(\infty)=w/z$ , $t^{-1}(\infty)=-w/z$ and $t(y)=(wy+az)/(zy+w)$.  
If we let $A=(t^{-1}(\infty)-y)/(t(\infty)-y)$, then we have
\begin{eqnarray*}
\gpb{\frac{t^{-1}(\infty)-t(\infty)}{t^{-1}(\infty)-y}}-\gpb{\frac{t^{-1}(\infty)-y}{t(\infty)-y}}
+\gpb{\frac{t(\infty)-y}{t(\infty)-t(y)}}+\gpb{\frac{t(\infty)-t(y)}{t(\infty)-y}}
&=&
\bconst{\F{q}}(A^{-1})-\sus{A}+\sus{\frac{t(\infty)-t(y)}{t(\infty)-y}}\\
=\bconst{\F{q}}+\sus{\frac{t^{-1}(\infty)-y}{t(\infty)-t(y)}}&=&\bconst{\F{q}}+\sus{-1}=\bconst{\F{q}}
\end{eqnarray*}  
So when $q\equiv 1\pmod{4}$ a generator of $\bl{\F{q}}$ is 
\[
 \left(
\sum_{i=1}^{r-3}\gpb{\frac{t(\infty)-t^{i+1}(\infty)}{t(\infty)-t^{i+2}(\infty)}}
\right)
+\bconst{\F{q}}.
\]
On the other hand, note that, since 
\[
\lambda(\sus{a})= (1-a)\asymm a + (1-a^{-1})\asymm a^{-1}=a\asymm a\in \sym{2}{\F{2}}{\sq{F}}
\]
it follows that $\sus{a}\in\pb{\F{q}}$ has order $2$ and thus 
\[
H_\theta:= \left(
\sum_{i=1}^{r-3}\gpb{\frac{t(\infty)-t^{i+1}(\infty)}{t(\infty)-t^{i+2}(\infty)}}
\right)
+\bconst{\F{q}}+\sus{a}
\]
is a generator of the cyclic group $\pb{\F{q}}$. 

\begin{rem}
If we let $\rr{\theta}$ be the corresponding isomorphism
\[
\rr{\theta}:\Z/(q+1)\to \pb{\F{q}},\qquad m\mapsto m H_\theta
\]
then the inverse map $\dilog{\theta}:\pb{\F{q}}\to\Z/(q+1)$  is a `universal discrete dilogarithm' on $\F{q}$ 
in the following 
sense: If $A$ is an (additive) abelian group and if $L:\F{q}^\times\to A$ is any map of sets satisfying $L(1)=0$ and 
\[
L(x)-L(y)+L\left(\frac{y}{x}\right)-L\left(\frac{1-x^{-1}}{1-y^{-1}}\right)+L\left(\frac{1-x}{1-y}\right)=0
\mbox{ for all } x,y\in \F{q}\setminus\{ 0,1\}
\]  
then there is a unique homomorphism $\tau:\Z/(q+1)\to A$ such that 
\[
L(x)=\tau(\dilog{\theta}(\gpb{x}))\mbox{ for all } x\in \F{q}^\times.
\]
\end{rem}

When $q\equiv -1\pmod{4}$, we can similarly obtain a formula for a generator of $\bl{\F{q}}$, but in this case 
we must compute a (more complicated) homomorphism $\ho{3}{Q}{\Z}\to\bl{\F{q}}$ where $Q$ is a 
generalised quaternion subgroup of $\spl{2}{\F{q}}$. As an example of a related calculation we prove
\begin{lem}\label{lem:sus-1}
Suppose that $q\equiv-1\pmod{4}$.  Then $\sus{-1}$ has order $2$ in $\bl{\F{q}}$. 
\end{lem}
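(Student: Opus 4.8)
The plan is to reduce the statement to the nonvanishing of a single class and then to detect that class $2$-locally, using the generalized quaternion $2$-Sylow subgroup together with the explicit chain-level machinery of Section~\ref{sec:h3g}.

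First the reductions. We have $\sus{-1}=\gpb{-1}+\gpb{(-1)^{-1}}=2\gpb{-1}$. Since $\F{q}^\times$ is cyclic, the target $\asym{2}{\Z}{\F{q}^\times}=\sym{2}{\F{2}}{\sq{\F{q}}}\cong\Z/2$ of $\lambda$ is killed by $2$, so $\lambda(\sus{-1})=2\lambda(\gpb{-1})=0$ and hence $\sus{-1}\in\bl{\F{q}}$. By Lemma~\ref{lem:suslin} we already know $2\sus{-1}=0$, so $\sus{-1}$ has order $1$ or $2$. By Lemma~\ref{lem:oddfinite}, $\bl{\F{q}}$ is cyclic of order $(q+1)/2$, which is \emph{even} because $q\equiv -1\pmod 4$; such a group has a unique element of order $2$. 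Thus everything reduces to proving that $\sus{-1}\neq 0$.

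To prove nonvanishing I would argue $2$-locally. Since $p$ is odd and $q\equiv -1\pmod 4$, a $2$-Sylow subgroup $Q$ of $\spl{2}{\F{q}}$ is generalized quaternion, realised (as in Lemma~\ref{lem:q+1}) inside the normalizer of the non-split torus $K=\ker(N_{\F{q^2}/\F{q}})$, generated by the $2$-power torsion of $\mu(K)$ together with an element $\tilde\sigma$ acting by inversion, and with centre $\{\pm I\}$. By Corollary~\ref{cor:subgp} the corestriction $\ho{3}{Q}{\Z}\to\ho{3}{\spl{2}{\F{q}}}{\Z}$ is injective, and since both sides have the same $2$-primary order it is an isomorphism onto the $2$-primary part. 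Composing with the surjection $\ho{3}{\spl{2}{\F{q}}}{\Z}\twoheadrightarrow\rbl{\F{q}}=\bl{\F{q}}$ furnished by Theorem~\ref{thm:main} and Lemma~\ref{lem:blochfin}, I obtain a map $\ho{3}{Q}{\Z}\to\bl{\F{q}}$ that is surjective on $2$-primary parts. Hence the order-$2$ element of $\bl{\F{q}}$ is the image of a suitable $2$-power multiple of a generator of $\ho{3}{Q}{\Z}$, and it suffices to identify that image with $\sus{-1}$.

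For the identification I would feed the explicit generator of $\ho{3}{Q}{\Z}$ supplied by the $4$-periodic resolution in Section~\ref{sec:h3g}, namely $\alpha_3(1)=\bigl(\sum_{i=1}^{t-1}(1,x,x^{i+1},x^{i+2})\bigr)-(1,x,xy,xy^2)-(1,xy,y^2,xy^2)-(1,xy,y,y^2)$, into the chain map $\beta_3$ (with base point $\infty$ and an auxiliary point off the orbit $Q\cdot\infty$) followed by the refined cross ratio $\rcr$, and then project to $\bl{\F{q}}$. Since the central element $-I=y^2$ acts trivially on $\projl{\F{q}}$ and $Q\cdot\infty=\{0,\infty\}$, the terms from the cyclic part collapse into Suslin elements $\sus{\cdot}$, exactly as in the $q\equiv 1\pmod 4$ quaternion computation inside the proof of Lemma~\ref{lem:oddfinite}; the new feature for $q\equiv -1\pmod 4$ is that the surviving $2$-torsion contribution is precisely $\sus{-1}$. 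Together with the surjectivity of the previous paragraph this shows that $\sus{-1}$ is the order-$2$ generator, and so has order $2$. The main obstacle is this explicit quaternion computation: one must split into the cases of the definition of $\beta_3$ according to which of the points $g_k(\infty)$ coincide, evaluate $\rcr$ on the resulting configurations, and simplify modulo the five-term relations and Lemma~\ref{lem:suslin} to isolate $\sus{-1}$. A secondary point needing care is the order bookkeeping: the kernel $\covtor{\F{q}}$ of $\ho{3}{\spl{2}{\F{q}}}{\Z}\to\bl{\F{q}}$ has $2$-part $\Z/4$, so one must confirm that the relevant $2$-power multiple lands on the order-$2$ element rather than on $0$, equivalently that the image of a generator of $\ho{3}{Q}{\Z}$ really generates the $2$-primary part of $\bl{\F{q}}$.
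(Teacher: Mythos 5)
Your reduction in the first paragraph is correct and clean ($\sus{-1}\in\bl{\F{q}}$, it is killed by $2$, and $\bl{\F{q}}$ is cyclic of even order $(q+1)/2$, so it suffices to show $\sus{-1}\neq 0$), and your overall strategy — detect the order-$2$ element via a quaternion subgroup, Corollary \ref{cor:subgp}, and the chain-level maps $\beta_3$ and $\rcr$ of Section \ref{sec:h3g} — is the same as the paper's. But the execution of the detection step has a concrete error. You assert that $Q\cdot\infty=\{0,\infty\}$ and that consequently ``the terms from the cyclic part collapse into Suslin elements exactly as in the $q\equiv 1\pmod 4$ computation.'' That orbit identity holds only in the split case $q\equiv 1\pmod 4$, where the $2$-Sylow subgroup is generated by $w$ and the diagonal matrices $D(z)$, $z\in\mu_{2^t}$, all of which preserve $\{0,\infty\}$. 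When $q\equiv-1\pmod 4$ the $2$-Sylow subgroup sits in the normalizer of the \emph{non-split} torus $\mu(K)$; its elements $\left[\begin{smallmatrix}a&b\alpha\\ b&a\end{smallmatrix}\right]$ send $\infty$ to $a/b$, so the orbit of $\infty$ has $|Q|/2$ elements. The tuples $\beta_3^{\infty,z}(1,x,x^{i+1},x^{i+2})$ therefore generically consist of four distinct points and produce genuine cross-ratio terms $\an{\cdot}\gpb{\cdot}$, not Suslin elements; the claimed collapse does not occur, and the computation as you describe it would fail.

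A second, unaddressed difficulty is that you work with the full $2$-Sylow subgroup, whose generator maps onto a \emph{generator} of the entire $2$-primary part of $\bl{\F{q}}$ (which can be large); you would then have to compute an explicit $2$-power multiple of that image and simplify it to $\sus{-1}$, which the chain-level formula does not hand you. The paper sidesteps both problems by using an order-$8$ quaternion subgroup $Q$ with $x=w$, $y^2=x^2=-I$, $xyx=y$: since the $2$-part of $\ho{3}{\spl{2}{\F{q}}}{\Z}$ is cyclic, the corestriction from $\ho{3}{Q}{\Z}\cong\Z/8$ is injective, and the $2$-part of the kernel $\covtor{\F{q}}$ of the map to $\bl{\F{q}}$ has order exactly $4$, the image of the generator of $\ho{3}{Q}{\Z}$ has order exactly $2$ and hence \emph{is} the element you need to identify. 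One then evaluates $\rcr\circ\beta_3^{\infty,z}$ on the single explicit cycle $(1,x,x^2,x^3)-(1,x,xy,xy^2)-(1,xy,y^2,xy^2)-(1,xy,y,y^2)$, whose entries lie in the four-point orbit $\{\infty,0,a,-1/a\}$ together with $z$ and $-1/z$, and simplifies the resulting six terms using the $\bconst{\F{q}}$ identities of Lemma \ref{lem:suslin}(2) — not merely the collapse to $\sus{\cdot}$ — to arrive at $\sus{-1}$. You should restrict to this $Q_8$ and carry out that computation; as written, your argument does not go through.
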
  
\begin{proof}
The calculations above allow us to conclude that $\spl{2}{\F{q}}$ contains a quaternion subgroup $Q$ of order $8$ 
with the property that the composite map
\[
\Z/8\cong\ho{3}{Q}{\Z}\to\ho{3}{\spl{2}{\F{q}}}{\Z}\to\bl{\F{q}}
\]
has image of order $2$. Now we can take generators $x$ and $y$ of $Q$ satisfying
\[
x=w=\matr{0}{1}{-1}{0},\qquad y^2=x^2=-I,\qquad xyx=y.
\]
By the calculations of the last section, a generator of $\ho{3}{Q}{\Z}$ is represented by the cycle
\[
(1,x,x^2,x^3)-(1,x,xy,xy^2)-(1,xy,y^2,xy^2)-(1,xy,y,y^2).
\]
Let $a:=y\cdot\infty$. Then $x\cdot\infty=0$ and $(xy)\cdot\infty=x\cdot a=-a^{-1}$. Choose 
$z\in\projl{\F{q}}\setminus\{\infty,0,a,-1/a\}$. Applying $\beta_3^{\infty,z}$ to this cycle gives the 
element
\begin{eqnarray*}
\left[\left(z,-1/z,\infty,0\right)+\left( z,-1/z,0,\infty\right)\right]
-\left[\left(\infty,-1/z,0,a\right)+\left(\infty,-1/z,a,0\right)\right]\\
-\left[\left(z,\infty,a,0\right)+(z,a,\infty,0)\right]
-\left[\left(z,a,-1/a,\infty\right)-\left(z,\infty,a,-1/a\right)\right].
\end{eqnarray*}
Applying $\rcr$ to this gives the element 
\[
X:=\sus{-z^2}-\sus{1+az}-\gpb{\frac{z}{z-a}}-\gpb{\frac{z}{a}}-\gpb{\frac{1+a^2}{1+az}}
+\gpb{\frac{1+az}{a(z-a)}}\in\bl{\F{q}}.
\]
We easily verify that
\[
-\gpb{\frac{z}{z-a}}-\gpb{\frac{z}{a}}=\bconst{\F{q}}-\sus{\frac{z}{z-a}}-\sus{\frac{z}{a}}
\]
and
\[
-\gpb{\frac{1+a^2}{1+az}}+\gpb{\frac{1+az}{a(z-a)}}=\sus{\frac{1+az}{a(z-a}}-\bconst{\F{q}}
\]
from which it follows that $X=\sus{-1}$. 
\end{proof}
\begin{cor} If $q\equiv 3\pmod{8}$, then $\pb{\F{q}}\cong \Z/(q+1)$.
\end{cor}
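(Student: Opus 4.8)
The plan is to combine the short exact sequence already used for the case $q\equiv 1\pmod 4$ with the production of a single element of order $4$. First I would record the relevant arithmetic: $q\equiv 3\pmod 8$ forces $q+1=4m'$ with $m'$ odd, and Lemma \ref{lem:oddfinite} gives that $\bl{\F{q}}$ is cyclic of order $(q+1)/2=2m'$. Substituting this into the short exact sequence
\[
0\to\bl{\F{q}}\to\pb{\F{q}}\to\sym{2}{\F{2}}{\sq{F}}\to 0,
\]
whose right-hand term has order $2$, shows that $\pb{\F{q}}$ is a finite abelian group of order $q+1=4m'$. Tensoring the same sequence with $\Z[1/2]$ annihilates the quotient and identifies the odd part of $\pb{\F{q}}$ with the odd part $\Z/m'$ of $\bl{\F{q}}$, so that the odd-primary component is automatically cyclic. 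The entire statement therefore reduces to deciding whether the $2$-Sylow subgroup, which has order $4$, is $\Z/4$ or $\Z/2\oplus\Z/2$.

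The key step is to exhibit an element of order $4$, and the natural candidate is $\gpb{-1}$. Since $(-1)^{-1}=-1$ we have $2\gpb{-1}=\gpb{-1}+\gpb{(-1)^{-1}}=\sus{-1}$, and because $q\equiv 3\equiv -1\pmod 4$ the hypotheses of Lemma \ref{lem:sus-1} are met, so $\sus{-1}$ has order exactly $2$ in $\bl{\F{q}}$. Consequently $2\gpb{-1}=\sus{-1}\ne 0$ while $4\gpb{-1}=2\sus{-1}=0$, so $\gpb{-1}$ has order $4$. Being of $2$-power order, $\gpb{-1}$ lies in the $2$-Sylow subgroup of $\pb{\F{q}}$; this subgroup has order $4$ and contains an element of order $4$, hence is cyclic.

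It then follows from the Chinese Remainder Theorem, using $\gcd(4,m')=1$, that $\pb{\F{q}}\cong\Z/4\oplus\Z/m'\cong\Z/4m'=\Z/(q+1)$. I do not anticipate a genuine obstacle beyond one delicate point: everything hinges on $\sus{-1}$ being nonzero, which is precisely the content of Lemma \ref{lem:sus-1} and is exactly what distinguishes the congruence $q\equiv 3\pmod 8$ — where the $2$-Sylow of $\bl{\F{q}}$ is only $\Z/2$, so that the doubled class $\gpb{-1}$ can witness an order-$4$ element — from the remaining odd cases. As throughout, the standing assumption that $\F{q}$ has at least $4$ elements is in force, which excludes only $q=3$.
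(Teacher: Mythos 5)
Your proposal is correct and follows essentially the same route as the paper: both arguments use Lemma \ref{lem:sus-1} together with the identity $2\gpb{-1}=\sus{-1}$ to see that $\gpb{-1}$ has order $4$, and then observe that for $q\equiv 3\pmod 8$ the $2$-Sylow subgroup of $\pb{\F{q}}$ has order exactly $4$, forcing it to be cyclic. Your additional remarks about the odd-primary part and the Chinese Remainder Theorem merely spell out what the paper leaves implicit.
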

\begin{proof} Since $\sus{-1}=2\gpb{-1}$ in $\pb{\F{q}}$, the computation just completed shows that
$\gpb{-1}$ has order $4$ in $\pb{\F{q}}$ when $q\equiv 3\pmod{4}$. On the other hand, if 
$q\equiv 3\pmod{8}$, then the $2$-Sylow subgroup of $\pb{\F{q}}$ has order $4$.  
\end{proof}
\begin{rem} On the other hand, if $q\equiv 7\pmod{8}$ then $2$ is a square in $\F{q}^\times$ and hence 
$\gpb{-1}\in\bl{\F{q}}$ has order $4$. In particular, if $q\equiv 7\pmod{16}$, then $\gpb{-1}$ generates the 
$2$-Sylow subgroup of $\bl{\F{q}}$.
\end{rem}


Finally, for any prime power $q$ let 
\[
t=\matr{0}{1}{-1}{-1}\in\spl{2}{\F{q}}
\]
of order $3$ and let $G=\an{t}\subset\spl{2}{\F{q}}$. Then the composite 
\[
\Z/3=\ho{3}{G}{\Z}\to\ho{3}{\spl{2}{\F{q}}}{\Z}\to\bl{\F{q}}
\]
sends $1$ to $\bconst{\F{q}}+\sus{-1}=4\bconst{\F{q}}$. Thus, this element has order $3$ if $3$ divides $q+1$ 
and (of course) has order $1$ otherwise. In view of Lemma \ref{lem:sus-1} we deduce
\begin{lem} The order of $\bconst{\F{q}}\in\bl{\F{q}}$ is $\mathrm{gcd}(6,(q+1)/2)$.
\end{lem}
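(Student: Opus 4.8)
The plan is to pin down the order $d := \mathrm{ord}(\bconst{\F{q}})$ by squeezing it between an easy upper bound and a matching lower bound, using only the two structural facts already established: the identity $3\bconst{\F{q}} = \sus{-1}$ together with $2\sus{y} = 0$ from Lemma~\ref{lem:suslin}, and the explicit value of the composite $\ho{3}{\an{t}}{\Z} \to \bl{\F{q}}$ computed in the paragraph just above.

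First I would dispose of the upper bound. Since $6\bconst{\F{q}} = 2\cdot 3\bconst{\F{q}} = 2\sus{-1} = 0$ by Lemma~\ref{lem:suslin}, the order $d$ divides $6$; and since $\bconst{\F{q}}$ lies in the group $\bl{\F{q}}$, which by Lemma~\ref{lem:oddfinite} has order $(q+1)/2$ when $q$ is odd, Lagrange's theorem gives $d \mid (q+1)/2$. Hence $d \mid \gcd(6,(q+1)/2)$. I would then record the elementary observations that $\gcd(6,(q+1)/2)$ is divisible by $2$ exactly when $q \equiv -1 \pmod 4$, and divisible by $3$ exactly when $3 \mid q+1$.

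For the lower bound I would verify that the $2$-part and the $3$-part of $\gcd(6,(q+1)/2)$ divide $d$ separately, which together force $\gcd(6,(q+1)/2) \mid d$. For the $2$-part: if $q \equiv -1 \pmod 4$ then Lemma~\ref{lem:sus-1} asserts $3\bconst{\F{q}} = \sus{-1}$ has order $2$, so it is nonzero; as $d \mid 6$, this rules out $d \in \{1,3\}$ and yields $2 \mid d$. For the $3$-part: if $3 \mid q+1$ then the computation recalled just before the lemma shows that the image of $\Z/3 = \ho{3}{\an{t}}{\Z}$, namely $4\bconst{\F{q}}$, has order exactly $3$; since $6\bconst{\F{q}} = 0$ we have $2\bconst{\F{q}} = 8\bconst{\F{q}} = 2\cdot 4\bconst{\F{q}} \neq 0$, which (again using $d \mid 6$) rules out $d \in \{1,2\}$ and yields $3 \mid d$. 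Combining the two bounds gives $d = \gcd(6,(q+1)/2)$.

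The only genuinely nontrivial inputs are the two nonvanishing assertions in the lower bound, and both are available off the shelf---Lemma~\ref{lem:sus-1} for the $2$-part and the order-$3$ cyclic subgroup calculation for the $3$-part---so the argument reduces to bookkeeping with divisors of $6$, and I expect no real obstacle. Finally, in characteristic $2$ one has $-1 = 1$ and hence $\sus{-1} = 0$, so $3\bconst{\F{q}} = 0$ and only the $3$-part of the argument survives: the same $\an{t}$-computation then shows the order equals $\gcd(3,q+1)$, which is what the stated formula specializes to once its vacuous $2$-part is discarded.
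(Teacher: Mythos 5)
Your argument is correct and is essentially the paper's: the paper likewise deduces the result by combining the bound $6\bconst{\F{q}}=2\sus{-1}=0$ with Lemma~\ref{lem:sus-1} for the $2$-part and the order of $4\bconst{\F{q}}$ (from the $\ho{3}{\an{t}}{\Z}$ computation) for the $3$-part; you have merely made the divisor bookkeeping explicit. The only caveat is that the statement as written presupposes $q$ odd (otherwise $(q+1)/2$ is not an integer), which your closing remark about characteristic $2$ correctly flags.
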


\bibliography{BWSL2}

\def\cprime{$'$}
\begin{thebibliography}{10}

\bibitem{bak:tang}
Anthony Bak and Guoping Tang.
\newblock Solution to the presentation problem for powers of the augmentation
  ideal of torsion free and torsion abelian groups.
\newblock {\em Adv. Math.}, 189(1):1--37, 2004.

\bibitem{bak:vavilov}
Anthony Bak and Nikolai Vavilov.
\newblock Presenting powers of augmentation ideals and {P}fister forms.
\newblock {\em $K$-Theory}, 20(4):299--309, 2000.
\newblock Special issues dedicated to Daniel Quillen on the occasion of his
  sixtieth birthday, Part IV.

\bibitem{bloch:regulators}
Spencer~J. Bloch.
\newblock {\em Higher regulators, algebraic {$K$}-theory, and zeta functions of
  elliptic curves}, volume~11 of {\em CRM Monograph Series}.
\newblock American Mathematical Society, Providence, RI, 2000.

\bibitem{brown:coh}
Kenneth~S. Brown.
\newblock {\em Cohomology of groups}, volume~87 of {\em Graduate Texts in
  Mathematics}.
\newblock Springer-Verlag, New York, 1982.

\bibitem{cartan:eilenberg}
Henri Cartan and Samuel Eilenberg.
\newblock {\em Homological algebra}.
\newblock Princeton University Press, Princeton, N. J., 1956.

\bibitem{sah:dupont}
Johan~L. Dupont and Chih~Han Sah.
\newblock Scissors congruences. {II}.
\newblock {\em J. Pure Appl. Algebra}, 25(2):159--195, 1982.

\bibitem{zickert:goette}
Sebastian Goette and Christian~K. Zickert.
\newblock The extended {B}loch group and the {C}heeger-{C}hern-{S}imons class.
\newblock {\em Geom. Topol.}, 11:1623--1635, 2007.

\bibitem{hut:arxivrbl11}
Kevin Hutchinson.
\newblock {A refined Bloch group and the third homology of $\mathrm{SL}_2$ of a
  field}.
\newblock {\em {arXiv:1101.3279}}.

\bibitem{hutchinson:mat}
Kevin Hutchinson.
\newblock A new approach to {M}atsumoto's theorem.
\newblock {\em $K$-Theory}, 4(2):181--200, 1990.

\bibitem{hutchinson:tao2}
Kevin Hutchinson and Liqun Tao.
\newblock The third homology of the special linear group of a field.
\newblock {\em J. Pure Appl. Algebra}, 213:1665--1680, 2009.

\bibitem{hutchinson:tao3}
Kevin Hutchinson and Liqun Tao.
\newblock Homology stability for the special linear group of a field and
  {M}ilnor-{W}itt ${K}$-theory.
\newblock {\em Doc. Math.}, (Extra Vol.):267--315, 2010.

\bibitem{levine:k3ind}
Marc Levine.
\newblock The indecomposable {$K\sb 3$} of fields.
\newblock {\em Ann. Sci. \'Ecole Norm. Sup. (4)}, 22(2):255--344, 1989.

\bibitem{mat:pres}
Hideya Matsumoto.
\newblock Sur les sous-groupes arithm\'etiques des groupes semi-simples
  d\'eploy\'es.
\newblock {\em Ann. Sci. \'Ecole Norm. Sup. (4)}, 2:1--62, 1969.

\bibitem{mazz:sus}
A.~Mazzoleni.
\newblock A new proof of a theorem of {S}uslin.
\newblock {\em $K$-Theory}, 35(3-4):199--211 (2006), 2005.

\bibitem{sus:merkurjev}
A.~S. Merkur{\cprime}ev and A.~A. Suslin.
\newblock The group {$K\sb 3$} for a field.
\newblock {\em Izv. Akad. Nauk SSSR Ser. Mat.}, 54(3):522--545, 1990.

\bibitem{milnor:huse}
John Milnor and Dale Husemoller.
\newblock {\em Symmetric bilinear forms}.
\newblock Springer-Verlag, New York, 1973.
\newblock Ergebnisse der Mathematik und ihrer Grenzgebiete, Band 73.

\bibitem{mirzaii:third}
B.~Mirzaii.
\newblock Third homology of general linear groups.
\newblock {\em J. Algebra}, 320(5):1851--1877, 2008.

\bibitem{mor:trieste}
F.~Morel.
\newblock An introduction to {$\Bbb{A}^1$}-homotopy theory.
\newblock {\em {ICTP} Lecture Notes}, 15, 2003.

\bibitem{morel:puiss}
Fabien Morel.
\newblock Sur les puissances de l'id\'eal fondamental de l'anneau de {W}itt.
\newblock {\em Comment. Math. Helv.}, 79(4):689--703, 2004.

\bibitem{neumann:extended}
Walter~D. Neumann.
\newblock Extended {B}loch group and the {C}heeger-{C}hern-{S}imons class.
\newblock {\em Geom. Topol.}, 8:413--474 (electronic), 2004.

\bibitem{quillen:cohkfin}
Daniel Quillen.
\newblock On the cohomology and {$K$}-theory of the general linear groups over
  a finite field.
\newblock {\em Ann. of Math. (2)}, 96:552--586, 1972.

\bibitem{sah:discrete3}
Chih-Han Sah.
\newblock Homology of classical {L}ie groups made discrete. {III}.
\newblock {\em J. Pure Appl. Algebra}, 56(3):269--312, 1989.

\bibitem{sus:homgln}
A.~A. Suslin.
\newblock Homology of {${\rm GL}\sb{n}$}, characteristic classes and {M}ilnor
  {$K$}-theory.
\newblock In {\em Algebraic $K$-theory, number theory, geometry and analysis
  (Bielefeld, 1982)}, volume 1046 of {\em Lecture Notes in Math.}, pages
  357--375. Springer, Berlin, 1984.

\bibitem{sus:tors}
A.~A. Suslin.
\newblock Torsion in {$K\sb 2$} of fields.
\newblock {\em $K$-Theory}, 1(1):5--29, 1987.

\bibitem{sus:bloch}
A.~A. Suslin.
\newblock {$K\sb 3$} of a field, and the {B}loch group.
\newblock {\em Trudy Mat. Inst. Steklov.}, 183:180--199, 229, 1990.
\newblock Translated in Proc.\ Steklov Inst.\ Math.\ {\bf 1991}, no.\ 4,
  217--239, Galois theory, rings, algebraic groups and their applications
  (Russian).

\bibitem{swan:period}
Richard~G. Swan.
\newblock The {$p$}-period of a finite group.
\newblock {\em Illinois J. Math.}, 4:341--346, 1960.

\bibitem{zagier:dilog}
Don Zagier.
\newblock The dilogarithm function.
\newblock In {\em Frontiers in number theory, physics, and geometry. {II}},
  pages 3--65. Springer, Berlin, 2007.

\end{thebibliography}
\end{document}